\documentclass[preprint,12pt]{elsarticle}




\usepackage{amssymb}
\usepackage{amsmath}


\usepackage[noend,ruled]{algorithm2e} 
\usepackage{float}
\usepackage{mathrsfs}
\usepackage{cleveref}
\usepackage{etoolbox}
\usepackage{xurl}

\newcommand{\N}{\mathbb N}
\newcommand{\Q}{\mathbb Q}
\newcommand{\bA}{{\mathfrak A}}
\newcommand{\bB}{{\mathfrak B}}
\newcommand{\bC}{{\mathfrak C}}

\newcommand{\bF}{{\mathfrak F}}

\newcommand{\checked}[1]{#1}

\DeclareMathOperator{\Cyc}{Cycl}

\DeclareMathOperator{\Aut}{Aut}
\DeclareMathOperator{\End}{End}

\DeclareMathOperator{\fPol}{fPol}
\DeclareMathOperator{\VCSP}{VCSP}

\DeclareMathOperator{\vcsp}{VCSP}
\DeclareMathOperator{\CSP}{CSP}

\DeclareMathOperator{\Pol}{Pol}

\DeclareMathOperator{\Opt}{Opt}
\DeclareMathOperator{\Id}{Id}
\DeclareMathOperator{\Feas}{Feas}
\DeclareMathOperator{\Sym}{Sym}

\DeclareMathOperator{\OIT}{OIT}
\DeclareMathOperator{\mi}{mi}
\DeclareMathOperator{\lele}{ll}
\DeclareMathOperator{\mx}{mx}
\DeclareMathOperator{\lex}{lex}
\DeclareMathOperator{\mix}{mix}

\DeclareMathOperator{\inj}{inj}
\DeclareMathOperator{\const}{const}

\DeclareMathOperator{\Betw}{Betw}
\DeclareMathOperator{\Sep}{Sep}
\DeclareMathOperator{\Dis}{Dis}

\newtheorem{theorem}{Theorem}
\newtheorem{lemma}[theorem]{Lemma}
\newtheorem{proposition}[theorem]{Proposition}
\newtheorem{corollary}[theorem]{Corollary}
\newtheorem{observation}[theorem]{Observation}
\newtheorem{definition}[theorem]{Definition}
\newtheorem{example}[theorem]{Example}
\newtheorem{question}[theorem]{Question}
\newtheorem{remark}[theorem]{Remark}

\newproof{proof}{Proof}
\AtEndEnvironment{proof}{\qed}

\journal{Information and Computation}

\begin{document}

\begin{frontmatter}



\title{A Complexity Dichotomy for Temporal Valued Constraint Satisfaction Problems}


\author[MB]{Manuel Bodirsky} 
\author[EB]{\'Edouard Bonnet} 
\author[ZS]{\v{Z}aneta Semani\v{s}inov\'{a}} 

\affiliation[MB]{organization={Institute of Algebra, TU Dresden},
            city={Dresden},
            postcode={01062}, 
            country={Germany}}

\affiliation[EB]{organization={Univ Lyon, CNRS, ENS de Lyon, Université Claude Bernard Lyon 1, LIP UMR5668},
            city={Lyon},
            postcode={69342}, 
            country={France}}

\affiliation[ZS]{organization={Institute of Discrete Mathematics and Geometry, TU Wien},
            addressline={Karlsplatz 13}, 
            city={Wien},
            postcode={1040}, 
            country={Austria}}

\begin{abstract}
We study the computational complexity of the \emph{valued} constraint satisfaction problem (VCSP) for every valued structure over ${\mathbb Q}$ that is preserved by all order-preserving bijections. Such VCSPs will be called \emph{temporal}, in analogy to the (classical) constraint satisfaction problem: a~relational structure is preserved by all order-preserving bijections if and only if all its relations have a~first-order definition in $({\mathbb Q};<)$, and the CSPs for such structures are called \emph{temporal} CSPs. 
Many optimization problems that have been studied intensively in the literature can be phrased as a~temporal VCSP. 
We prove that a~temporal VCSP is in P, or NP-complete. Our analysis uses the concept of \emph{fractional polymorphisms}.  
This is the first dichotomy result for VCSPs over infinite domains which is complete in the sense that it treats \emph{all} valued structures that contain a given automorphism group.
\end{abstract}



\begin{keyword}
Constraint Satisfaction Problems \sep valued CSPs \sep temporal CSPs \sep fractional polymorphisms \sep complexity dichotomy \sep min CSPs

68Q25 \sep 08A70
\end{keyword}

\end{frontmatter}




\section{Introduction}
\emph{Valued constraint satisfaction problems (VCSPs)} form
a large class of computational optimization problems.
A VCSP is parameterized by a \emph{valued structure} (called the \emph{template}), which consists of a \emph{domain} $D$ and cost functions, each defined on $D^k$ for some $k$. The input to the VCSP consists of a finite set of variables, a finite sum of cost functions applied to these variables,  and a threshold $u$, 
and the task is to find an assignment to the variables so that the sum of the costs is at most $u$. The computational complexity of such problems has been studied depending on
the valued structure that parameterizes the problem. 
VCSPs generalize constraint satisfaction problems (CSPs), which can be viewed as a variant of VCSPs with costs from the set $\{0, \infty\}$: every constraint is either satisfied or surpasses every finite threshold. 
VCSPs also generalize min-CSPs, 
which are the natural variant of CSPs
where, instead of asking whether all constraints can be satisfied at once, we search for an assignment that minimizes the number of unsatisfied constraints. Such problems can be modeled as VCSPs with costs from the set $\{0,1\}$.

A major achievement of the field is that if the domain
of the valued structure $\bA$ is finite, then the computational complexity of $\vcsp(\bA)$ is in P, or NP-complete. This result has an interesting history.
The classification task was first considered in~\cite{CohenCooperJeavonsVCSP} with important first results that indicated that we might expect a good systematic theory for such VCSPs. A milestone was reached by Thapper and \v{Z}ivn\'{y} with the proof of a complexity dichotomy for the case where the cost functions never take value $\infty$~\cite{ThapperZivny13}.
On the hardness side, 
Kozik and Ochremiak~\cite{KozikOchremiak15} formulated a condition that implies hardness for $\vcsp(\bA)$ and found equivalent characterisations that suggested that this condition characterises NP-hardness (unless P=NP, of course). Kolmogorov, Krokhin, and Rol\'{i}nek~\cite{KolmogorovKR17} then showed that if the hardness condition from~\cite{KozikOchremiak15} does not apply, linear programming relaxation in combination with algorithms for classical CSPs can be used to solve $\vcsp(\bA)$, conditional on the tractability conjecture for (classical) CSPs. Finally, this conjecture about CSPs has been confirmed
independently by Bulatov~\cite{BulatovFVConjecture} and by Zhuk~\cite{ZhukFVConjecture, Zhuk20}, thus completing the complexity dichotomy for $\vcsp(\bA)$ for finite-domain templates $\bA$ as well.
A key tool for distinguishing the tractable VCSPs from the NP-hard ones are \emph{fractional polymorphisms}, which, in some sense, capture the symmetries of the VCSP template. 

Many important optimization problems in the literature cannot be modeled as VCSPs if we restrict to valued structures on a finite domain;
VCSPs that require an infinite domain are, for example, the min-correlation-clustering problem with partial information~\cite{CorrelationClustering,ViolaThesis}, ordering min-CSPs~\cite{ApproxOrderingCSP}, phylogeny min-CSPs~\cite{ChatziafratisM23}, VCSPs with semilinear constraints~\cite{BodirskyMaminoViola-Journal}, and the class of resilience problems from database theory~\cite{Resilience, NewResilience, LatestResilience, Resilience-VCSPs}.

For VCSPs with infinite templates we cannot hope for general classification results, since this is already out of reach for the special case of CSPs over infinite domains \cite{BodirskyGrohe}. However, a powerful algebraic machinery was developed to study CSPs of structures with a rich automorphism group,
which has led to
classification results for many concrete automorphism groups: we list \cite{tcsps-journal,BodPin-Schaefer-both,phylo-long,BBSpatial,AnOrderOutOf}
as a representative sample. Some part of this machinery has also been developed for VCSPs in \cite{Resilience-VCSPs}, inspired by the concepts from infinite-domain CSPs and finite-domain VCSPs~\cite{FullaZivny,KozikOchremiak15}. 
Nevertheless, no complexity classification for VCSPs such as for the classes of CSPs discussed above was obtained so far.

In this article we provide the first VCSP dichotomy result for a class of templates 
which consists of \emph{all} valued structures preserved by some fixed permutation group.
Concretely, we prove that the VCSP for every valued structure with the domain $\Q$ that is preserved by all order-preserving bijections is in P or NP-complete. We call such valued structures \emph{temporal}, in analogy to temporal relational structures, i.e., structures with the domain  $\Q$ preserved by all order-preserving bijections -- these are precisely the structures with a 
first-order definition over $(\Q;<)$. We also provide disjoint algebraic conditions that characterize the tractable and the NP-complete case, in analogy to similar classifications for classes of (V)CSPs. The result confirms the dichotomy conjecture from \cite[Conjecture 9.3]{Resilience-VCSPs} for the special case of temporal valued structures. Note that our classification result is incomparable with the result of~\cite{BodirskyMaminoViola-Journal} which shows that that submodular PLH functions form a maximally tractable class of PLH cost functions, because the class of temporal valued structures is a proper subclass of PLH and contains structures with tractable VCSPs that are not submodular.

Apart from the relevance of temporal VCSPs as a test case for understanding VCSPs on infinite domains, 
they constitute an important class for several reasons:
\begin{itemize}
\item 
Temporal VCSPs encompass many natural optimization problems\footnote{It should be mentioned that these problems come in two flavours: one is where the input is a graph, or more generally a structure; the other, which we adopt here, is that the input consists of a finite sum of cost functions, which in particular allows that the sum contains identical summands. In the setting of Min-CSPs for graphs, this corresponds to considering \emph{multigraphs} in  the input. However, often (but not always, see Theorems 8.6 and 8.7 in~\cite{LatestResilience}) the two variants of the problem have the same complexity (see, e.g.,~\cite[Section 2.3]{Interval} for relevant techniques in this context).}, for example, \emph{Directed Feedback Arc Set}, \emph{Directed Subset Feedback Arc Set}, \emph{Edge Multicut} (aka Min-Correlation-Clustering with Partial Information), 
\emph{Symmetric Directed Multicut},
\emph{Steiner Multicut}, \emph{Disjunctive Multicut}, and many more~\cite{Multicut,EibenRW22,OsipovPilipczuk,OsipovW23,PointAlgebraMinCSP}.
\item The complete classification for temporal CSPs has been the basis to obtain other complexity classifications in temporal and spatial reasoning via complexity classification transfer techniques~\cite{Products}; we expect that temporal valued CSPs play a similar role for the corresponding optimisation problems.
\item Temporal VCSPs contain interesting polynomial-time tractable cases with some non-trivial interaction between the `soft' (i.e., finite-valued) and `crisp' constraints (i.e., $\{0,\infty\}$-valued) in the input ( Lemma~\ref{lem:lex-algo}). 
\end{itemize}

Our tractability condition for temporal VCSPs is phrased in terms of  \emph{fractional polymorphisms}, that is, probability distributions on operations with particular properties. Surprisingly, the fractional polymorphisms that appear in this classification are of a very particular shape: there is always a single operation with probability 1. 
It is known that in general, such fractional polymorphisms are not sufficient to capture the border between polynomial-time tractable and NP-hard VCSPs~\cite{cohen2006complexity}, and we therefore present fractional polymorphisms in full generality for consistency with the literature. 
The concrete operations that appear in this context are the same operations that were already essential for the the classification of temporal CSPs. 
The hardness condition is based on the notion of \emph{\checked{pp-}expressibility}, which generalizes primitive positive definitions, and on the notion of generalized \emph{pp-constructions}~\cite{Resilience-VCSPs}, which provide polynomial-time reductions between VCSPs. Interestingly enough, it is not known whether preservation by fractional polymorphisms characterises \checked{pp-}expressibility in our setting; this is known for valued structures with a finite domain~\cite{VCSP-Galois,FullaZivny}.
Our classification proof, however, does not rely on such a characterisation.

\subsection{Related work} 
VCSPs on infinite domains have been studied in~\cite{ViolaThesis, SchneiderViola, ViolaZivny}. However, the valued structures considered in these articles typically do not have an oligomorphic automorphism group, a property that is essential for applying our techniques for classifying the complexity of VCSPs. The foundations of the theory for VCSPs of valued structures with an oligomorphic automorphism group were laid in~\cite{Resilience-VCSPs} with the motivation to study \emph{resilience problems} from database theory. Concrete subclasses of temporal VCSPs have been studied in the context of min-CSPs from the parameterized complexity perspective: the complexity of \emph{equality min-CSPs} has been classified in~\cite{equalityminCSP} and parameterized complexity of min-CSPs over the Point Algebra has been classified in~\cite{PointAlgebraMinCSP}. The authors mention a classification of first-order generalizations of the Point Algebra as a natural continuation of their research \cite[Section 5]{PointAlgebraMinCSP}. The present paper contains a classification of VCSPs over all temporal structures and thus provides
the foundations for classifying the parameterized complexity of min-CSPs for such structures, including
algebraic techniques
that we expect to be useful in this context as well.

\subsection{Outline} 
The article is organized as follows. Section~\ref{sect:prelims} contains preliminaries on VCSPs in general, with some notation and properties specific to temporal VCSPs. Section~\ref{sect:gen-facts} contains several new facts about VCSPs that have been used in the classification. Section~\ref{sect:evcsps} contains the classification of \emph{equality} VCSPs, that is, VCSPs of valued structures with an automorphism group equal to the full symmetric group; on the one hand, this serves as a warm-up, on the other hand it is a building block for the general case.  
Section~\ref{sect:temp} contains the full classification of temporal VCSPs, which is the main contribution of the paper.

\section{Preliminaries}
\label{sect:prelims}
Let ${\mathbb N} := \{0,1,2,\dots\}$ be the set of natural numbers.
For $k \in \N$ the set $\{1,\dots, k\}$ will be denoted by $[k]$. The set of rational numbers is denoted by $\Q$ and the standard strict linear order of $\Q$ by $<$.
We also need an additional value $\infty$; all
we need to know about $\infty$ is that
\begin{itemize}
\item $a < \infty$ for every $a \in {\mathbb Q}$,
\item $a + \infty = \infty + a~= \infty$ for all $a \in {\mathbb Q} \cup \{\infty\}$, and 
\item $0 \cdot \infty = 
\infty \cdot 0 = 0$
and $a \cdot \infty =
\infty \cdot a~= \infty$ for $a > 0$. 
 \end{itemize}

If $A$ is a~set, then $\Sym(A)$ denotes the group of all permutations of $A$. If  $t \in A^k$, then we implicitly assume that $t=(t_1, \dots, t_k)$, where $t_1,\dots,t_k \in A$. For an operation  $f \colon A^{\ell} \to A$ 
and $t^1, \dots,t^\ell \in A^k$, we
use the following notation for applying $f$ componentwise: 
\[ f(t^1, \dots, t^\ell) := (f(t^1_1, t^2_1, \dots, t^\ell_1), \ldots, f(t^1_k, t^2_k, \dots, t^\ell_k)).\]

\subsection{Valued structures}

Let $A$ be a~set and let $k \in {\mathbb N}$. 
A \emph{valued relation of arity $k$ over $A$}
is a~function $R \colon A^k \to {\mathbb Q} \cup \{\infty\}$.  
We write ${\mathscr R}_A^{(k)}$ for the set of all valued relations over $A$ of arity $k$, and define \[{\mathscr R}_A := \bigcup_{k \in {\mathbb N}} {\mathscr R}_A^{(k)}.\]
A valued relation is called \emph{finite-valued} if it takes values only in $\Q$.

Usual relations $R \subseteq A^k$ will also be called \emph{crisp} relations.
A~valued relation $R \in {\mathscr R}_A^{(k)}$ that only takes values from $\{0,\infty\}$ will be identified with the crisp relation 
$\{t \in A^k \mid R(t) = 0\}.$
A valued relation $R \in {\mathscr R}_A^{(k)}$ is called \emph{essentially crisp} if 
there exists $a \in \Q$ such that $R(t) \in \{a, \infty\}$ for every $t \in A^k$.
For $R \in {\mathscr R}_A^{(k)}$ the \emph{feasibility relation of $R$} is defined as
$ \Feas(R) := \{t \in A^k \mid R(t) < \infty\}.$
For $S \subseteq A^k$ and $a,b \in {\mathbb Q} \cup \{\infty\}$, we denote by $S_a^b$ the valued relation such that $S_a^b(t) = a$ if $t \in S$, and $S_a^b(t) = b$ otherwise.
We write $S_0^\infty$ to stress that $S$ is a crisp relation viewed as a valued relation. 
The unary empty relation on $A$, where every element of $A$ evaluates to $\infty$, is denoted by $\bot$.

\begin{example}
On the domain $\Q$, the valued relation $(=)_0^\infty$  denotes the crisp equality relation, while $(<)_0^1$ denotes the valued relation $(<)_0^1(x,y) = 0$ if $x < y$ and $(<)_0^1(x,y) = 1$ if $x \geq y$. 
\end{example}

A \emph{(relational) signature} $\tau$ is a~set of \emph{relation symbols}, each of them equipped with an arity from~${\mathbb N}$. A~\emph{valued $\tau$-structure} $\bA$ consists of a~set $A$, which is also called the \emph{domain} of $\bA$, and a~valued relation $R^{\bA} \in {\mathscr R}_A^{(k)}$ for each relation symbol $R \in \tau$ of arity $k$. All valued structures in this article have countable domains. We often write $R$ instead of $R^{\bA}$ if the valued structure is clear from the context. When not specified, we assume that the domains of valued structures $\bA, \bB, \bC, \dots$ are denoted $A, B, C, \dots$, respectively. 
If ${\mathcal R}$ is a~set of valued relations over a~common domain $A$, we write $(A;{\mathcal R})$ for a~valued structure $\bA$ whose relations are precisely the relations from $\mathcal R$; we only use this notation if the precise choice of the signature does not matter.
A valued $\tau$-structure where all valued relations only take values from $\{0,\infty\}$
may then be viewed as a~\emph{relational} or \emph{crisp} $\tau$-structure in the classical sense. A valued structure is called \emph{essentially crisp} if all of its valued relations are essentially crisp. If $\bA$ is a~valued $\tau$-structure on the domain $A$, then $\Feas(\bA)$ denotes the relational $\tau$-structure $\bA'$ on the domain $A$ where $R^{\bA'}=\Feas(R^\bA)$
for every $R\in \tau$. If $\sigma \subseteq \tau$ and $\bA'$ is a valued $\sigma$-structure such that $R^{\bA'} = R^{\bA}$ for every $R \in \sigma$, then we call $\bA'$ a \emph{reduct of $\bA$}.

\subsection{Valued constraint satisfaction problems}
Let $\tau$ be a~relational signature. An \emph{atomic \mbox{$\tau$-expression}} is an expression
of the form $R(x_1,\dots,x_k)$ for $R \in \tau$, $(=)_0^\infty(x_1, x_2)$, or $\bot(x_1)$
where  $x_1,\dots,x_k$ are (not necessarily distinct) variable symbols.
A \emph{$\tau$-expression} is an expression $\phi$ 
of the form 
$\sum_{i \leq m} \phi_i$
where $m \in {\mathbb N}$ 
and $\phi_i$ for $i \in \{1,\dots,m\}$ is 
an atomic $\tau$-expression.
Note that the same atomic $\tau$-expression might appear several times in the sum. 
We write $\phi(x_1,\dots,x_n)$ for a~$\tau$-expression where all the variables  
are from the set $\{x_1,\dots,x_n\}$. 
If $\bA$ is a~valued $\tau$-structure, then a~$\tau$-expression $\phi(x_1,\dots,x_n)$ defines over $\bA$ a~member of ${\mathscr R}_A^{(n)}$ in the usual way, which we denote by $\phi^{\bA}$ (for example, if $\phi(x,y)=R(x)+S(y)$ and $\bA$ is an $\{R,S\}$-structure, then $\phi^\bA(x,y) = R^\bA(x)+S^\bA(y)$ for all $x,y\in A$).
If $\phi$ is the empty sum, then $\phi^{\bA}$ is constant~$0$.

Let $\bA$ be a~valued structure over a~finite signature $\tau$.
The \emph{valued constraint satisfaction problem for $\bA$}, denoted by \emph{$\VCSP(\bA)$}, is the computational
problem to decide for a~given $\tau$-expression $\phi(x_1,\dots,x_n)$ 
and a~given $u \in {\mathbb Q}$
whether there exists
$t \in A^n$ such that $\phi^{\bA}(t) \leq u$. 
We refer to $\phi$ as an \emph{instance} of $\VCSP(\bA)$,
and to $u$ as the \emph{threshold}. We also refer to the pair $(\phi,u)$ as a~(positive or negative) \emph{instance} of $\VCSP(\bA)$.
A tuple $t \in A^n$ such that $\phi^{\bA}(t) \leq u$ is called a~\emph{solution for $(\phi,u)$}.
The \emph{cost}  of $\phi$ (with respect to $\bA$) is defined to be
$\inf_{t \in A^n} \phi^{\bA}(t).$
In some contexts, it will be beneficial to consider only a~given $\tau$-expression $\phi$ to be the input of $\VCSP(\bA)$ (rather than $\phi$ and the threshold $u$) and a~tuple $t \in A^n$ will then be called a~\emph{solution for $\phi$} if the cost of $\phi$ equals $\phi^{\bA}(t)$. In general, there might not be any solution. 
If there exists $t \in A^n$ such that $\phi^{\bA}(t) < \infty$ then $\phi$ is called \emph{satisfiable}. 
To give an example of a VCSP, note that $\VCSP(\Q; (<)_0^1)$ is the minimum feedback arc set problem for directed multigraphs.

If $\bA$ is a~relational $\tau$-structure, then $\CSP(\bA)$ is the problem of deciding satisfiability of conjunctions of atomic $\tau$-formulas in $\bA$.
Therefore, if $\bA$ is a relational structure then $\VCSP(\bA)$ and $\CSP(\bA)$ are essentially the same problem.  

\subsection{Automorphisms} \label{sect:aut}

Let $k \in {\mathbb N}$, 
let $R \in {\mathscr R}^{(k)}_A$, and let $\alpha$ be a~permutation of $A$. Then $\alpha$ \emph{preserves} $R$ if for all $t \in A^k$
we have $R(\alpha(t)) = R(t)$. 
If $\bA$ is a~valued structure with domain $A$, then 
an \emph{automorphism} of $\bA$ is a~permutation of $A$ that preserves all valued relations of $R$. 
The set of all automorphisms of $\bA$ is denoted by $\Aut(\bA)$, and forms a~group with respect to composition. If $\bB$ is a~valued structure and we write $\Aut(\bB) \subseteq \Aut(\bA)$ or $\Aut(\bB) = \Aut(\bA)$, we implicitly assume that $\bA$ and $\bB$ have the same domain.

Let $k \in {\mathbb N}$. 
An \emph{orbit of $k$-tuples} of a~permutation group $G$ on a set $A$ is a~set of the form $\{ \alpha(t) \mid \alpha \in G \}$ for some $t \in A^k$. 
A permutation group $G$ on a~countable set is called \emph{oligomorphic} if for every $k \in {\mathbb N}$ there are finitely many orbits of $k$-tuples in $G$ \cite{Oligo}. For example, $\Aut(\Q; <)$ and therefore every permutation group on $\Q$ that contains $\Aut(\Q; <)$ is oligomorphic. If $\bA$ is a relational structure with an oligomorphic automorphism group and $R \subseteq A^k$, then $R$ is first-order definable over $\bA$ if and only if $R$ is preserved by $\Aut(\bA)$ (this theorem, including the definition of first-order logic, is treated, e.g., in \cite{Book} (Theorem 4.2.9)).

Let $\bA$ be a~valued $\tau$-structure and $\bB$ a~relational structure. Suppose that $\Aut(\bB)$ is oligomorphic and $\Aut(\bB) \subseteq \Aut(\bA)$ (and hence $\Aut(\bA)$ is oligomorphic). Let $R \in \tau$ be of arity $k$. Then $R^{\bA}$ attains only finitely many values
by the oligomorphicity of $\Aut(\bA)$. Moreover, if for some $s,t \in A^k$ we have $R^{\bA}(s) \neq R^{\bA}(t)$, then $s$ and $t$ lie in a~different orbit of $\Aut(\bB)$. Therefore, for every value $a \in \Q \cup \{\infty\}$, there is a~union $U_a$ of orbits of $k$-tuples under the action of $\Aut(\bB)$ such that $R^{\bA}(t) = a$ if and only if $t \in U_a$. Since $U_a$ is preserved by $\Aut(\bB)$, it is first-order definable over $\bB$ by a~formula $\phi_a$.
Hence,  $R$ can be given by a~list of values $a$ in the range of $R$ and first-order formulas $\phi_a$ over $\bB$. Such a~collection $((R, a, \phi_a)\mid R \in \tau, \exists t \in A^k (R(t)=a))$ will be called a~\emph{first-order definition} of $\bA$ in $\bB$. Clearly, if a~valued structure $\bA$ has a~first-order definition in a~relational structure $\bB$, then $\Aut(\bB) \subseteq \Aut(\bA)$. Note that for some structures $\bB$ such as $(\Q;=)$ and $(\Q; <)$, the formulas $\phi_a$ can be chosen to be quantifier-free, and hence as disjunctions of conjunctions of atomic formulas over $\bB$ (in fact, this is the case for every homogeneous structure with a~finite relational signature). We will use first-order definitions of valued structures to be able to give valued structures as an input to decision problems (see
Proposition~\ref{prop:dec}).

\subsection{Expressive power}
We define generalizations of the concepts of \emph{primitive positive definitions} and \emph{relational clones}.
The motivation is that relations with a~primitive positive definition can be added to the structure without changing the complexity of the respective CSP. 

\begin{definition}
Let $A$ be a~set and $R,R' \in {\mathscr R}_A$.
We say that $R'$ can be obtained from $R$ by 
\begin{enumerate}
\item \emph{projecting} if $R'$ is of arity $k$, $R$ is of arity $k+n$ and for all $s \in A^k$, $R'(s) = \inf_{t \in A^n} R(s,t)$.
\item \emph{non-negative scaling} 
 if there exists $r \in {\mathbb Q}_{\geq 0}$ such that $R = r R'$;
 \item  \emph{shifting} if there exists $s \in {\mathbb Q}$ such that $R = R' + s$. 
\end{enumerate}
If $R$ is of arity $k$, then the relation that contains all minimal-value tuples of $R$ is 
\[\Opt(R) := \{t \in \Feas(R) \mid R(t) \leq R(s) \text{ for every } s \in A^k\}.\]
\end{definition}

Note that $\inf_{t \in A^n} R(s,t)$ in item 1 might be irrational or $-\infty$.
If this is the
case, then $\inf_{t \in A^n} R(s,t)$ does not express a~valued relation
because valued relations must have weights from ${\mathbb Q} \cup \{\infty\}$.
However, if $R$ is preserved by all permutations of
an oligomorphic automorphism group, then $R$ 
attains only finitely many values and therefore this is never the case.

If $S \subseteq \mathscr{R}_A$, then an \emph{atomic expression over $S$} is an atomic $\tau$-expression where $\tau=S$. We say that $S$ is \emph{closed under forming sums of atomic expressions} if 
for every $n \in \N$, $R_1, \dots, R_n \in S$ of arity $k_1, \dots, k_n$, respectively, and $i_1^j, \dots, i_{k_j}^j \in [k_j]$ for $j \in \{1, \dots, n \}$, the set $S$ also contains the valued relation $R$ of arity $k$ defined by
\[R(x_1, \dots, x_n) := \sum_{j=1}^n R_j(x_{i_1^j}, \dots, x_{i_{k_j}^j}).\]

\begin{definition}[valued relational clone]\label{def:wrelclone}
A \emph{valued relational clone (over $A$)} is 
a subset of 
${\mathscr R}_A$ 
that 
is closed under forming sums of atomic expressions,
projecting, shifting, non-negative scaling, $\Feas$, and $\Opt$; \checked{we will refer to expressions formed this way as \emph{pp-expressions}.}
For a~valued structure $\bA$ with the domain $A$,  
we write 
$\langle \bA \rangle$ for the smallest valued relational clone that contains the valued relations of $\bA$. 
If $R \in \langle \bA \rangle$, we say that $\bA$ \checked{\emph{pp-expresses}} $R$.
\end{definition}

Note that every relation which is primitively positively definable from a~set $S \subseteq {\mathscr R}_A$ of crisp relations lies in $\langle S \rangle$. Moreover, if $\bA$ is a relational structure and $R \in \langle \bA \rangle$, then $R$ is essentially crisp and $\Feas(R)$ is primitively positively definable from $\bA$; this is easily verified by induction. \checked{This justifies the use of the acronym `pp' that stands for primitive positive, because pp-expressions generalize primitive positive definitions.}

The following lemma is the main motivation for the concept of \checked{pp-}express\-ibility.

\begin{lemma}[Lemma 4.6 in~\cite{Resilience-VCSPs}] \label{lem:expr-reduce}
Let $\bA$ be a~valued structure on a~countable domain with an oligomorphic 
automorphism group and a~finite signature. 
Suppose that $\bB$ is a~valued structure with a~finite signature over the same domain $A$ such that every valued relation of $\bB$ is from $\langle \bA \rangle$. 
Then there is a~polynomial-time reduction from
$\VCSP(\bB)$ to $\VCSP(\bA)$.
\end{lemma}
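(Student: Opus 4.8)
The plan is to reduce an instance of $\VCSP(\bB)$ to an instance of $\VCSP(\bA)$ by replacing every constraint over $\bB$ with a gadget over $\bA$ that ``implements'' the same valued relation. Since $\bB$ has a finite signature and every $R^{\bB}$ lies in $\langle \bA \rangle$, the smallest valued relational clone containing the relations of $\bA$, I would first argue that each $R^{\bB}$ is obtained from the relations of $\bA$ by finitely many applications of the generating operations: forming sums of atomic expressions, projecting, shifting, non-negative scaling, $\Feas$, and $\Opt$. The key point is that this derivation is \emph{fixed} (it does not depend on the input instance), so all the reductions it induces are of bounded size and computable in polynomial time. The heart of the matter is therefore to check that each of the six clone operations corresponds to a polynomial-time-computable transformation of VCSP instances together with an affine adjustment of the threshold.

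Concretely, I would proceed operation by operation. Sums of atomic expressions are immediate: a constraint $\big(\sum_i R_i\big)(\bar x)$ is just the sum of the constraints $R_i(\bar x)$, so nothing needs to be done to the threshold. Projection $R'(\bar s) = \inf_{\bar t} R(\bar s,\bar t)$ is handled by introducing fresh variables for the projected-away coordinates; here I would invoke the remark after the definition of projection, namely that because $\Aut(\bA)$ is oligomorphic every relation in $\langle\bA\rangle$ attains only finitely many values, so the infimum is attained and is a genuine valued relation, and a solution to the expanded instance of cost $\le u$ yields a solution to the original of cost $\le u$ and vice versa. Shifting $R = R' + s$ is absorbed by replacing the threshold $u$ with $u - ms$ where $m$ is the number of occurrences of that constraint; non-negative scaling $R = rR'$ is handled by simulating $r$ copies of the constraint when $r \in \Q_{\ge 0}$ has, say, numerator/denominator clearing, and adjusting the threshold by the corresponding rational factor (formally: replace the single constraint of cost bound contribution by $r$ copies, or clear denominators globally by multiplying $u$ and all rational weights, which is legitimate since there are finitely many). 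The operation $\Feas(R)$ produces a crisp $\{0,\infty\}$ relation, which is implemented by the essentially crisp relation obtained from $R$ by a threshold test: $R(\bar t) < \infty$ iff $\bar t$ avoids the infinite part, and since $R$ takes finitely many finite values bounded by some $M$, the crisp relation $\Feas(R)$ is $\{\bar t : R(\bar t) \le M\}$, so a $\Feas(R)$-constraint can be simulated by an $R$-constraint with a suitably large local threshold contribution. Finally $\Opt(R)$ is the crisp relation of minimum-value tuples; again using that $R$ has finitely many finite values, $\Opt(R) = \{\bar t : R(\bar t) \le m_R\}$ where $m_R = \min R$ is a fixed rational, so an $\Opt(R)$-constraint is simulated in the same way.

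I would then assemble these: given an input $(\phi, u)$ for $\VCSP(\bB)$, replace each atomic subexpression $R^{\bB}(\bar x)$ by its gadget over $\bA$ (introducing fresh variables, adding auxiliary constraints, and recording a per-gadget additive and multiplicative adjustment of the threshold determined by the fixed derivation of $R^{\bB}$ in $\langle\bA\rangle$), and collect a single new threshold $u'$ and a single new $\tau_{\bA}$-expression $\phi'$. Correctness — that $(\phi,u)$ is a positive instance iff $(\phi',u')$ is — follows by induction on the derivation, using the per-operation claims above. Polynomiality is clear because each of the finitely many relations $R^{\bB}$ has a derivation of fixed (constant) size, so the blow-up is linear in $|\phi|$.

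The main obstacle I anticipate is the bookkeeping around $\Feas$ and $\Opt$, and around non-negative scaling, because these change the ``shape'' of the threshold in a way that is local to each constraint: one has to be careful that the global threshold $u$ can be split coherently across many constraints, or, more cleanly, to clear all the rational adjustments simultaneously and argue that infima are attained. The cleanest route — which is presumably what the authors take — is to push all the $\Feas$/$\Opt$/scaling manipulations into a finite preprocessing of the \emph{template} (observing that $\langle\bA\rangle$ contains $R^{\bB}$ \emph{witnessed by a finite term}), so that only the per-constraint gadget needs to be built on the input side; the threshold is then adjusted once, globally, by a fixed affine function, and the infimum-is-attained issue is dispatched once and for all via oligomorphicity. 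The remaining reference to $\CSP$ (Remark~\ref{rem:crisp-expr}) can be used if one prefers to treat the crisp parts separately, but it is not strictly needed.
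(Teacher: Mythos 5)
The paper does not prove this lemma itself (it is imported as Lemma~4.6 of the cited resilience/VCSP paper), so your attempt has to be judged on its own. It has a genuine gap, and it sits exactly where you anticipated: the treatment of $\Feas$ and $\Opt$. A VCSP instance has a single global threshold, so there is no such thing as a ``suitably large local threshold contribution''. If you replace a constraint $\Feas(R)(\bar x)$ by $R(\bar x)$, its contribution is no longer the fixed value $0$ on feasible tuples but a \emph{variable} finite value, and this cannot be absorbed by an affine change of $u$: for the instance $\Feas(R)(\bar x)+S(\bar y)\leq u$, setting the new threshold to $u+\max R$ accepts assignments with $R(\bar x)$ minimal but $S(\bar y)>u$, while setting it to $u+\min R$ rejects witnesses with $R(\bar x)$ non-minimal. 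Symmetrically, replacing $\Opt(R)(\bar x)$ by one copy of $R(\bar x)$ and shifting $u$ by $\min R$ is unsound, because an assignment can leave $\Opt(R)$ at finite extra cost $\delta$ and compensate for $\delta$ elsewhere. Your proposed repair---pushing the $\Feas$/$\Opt$/scaling manipulations into a ``preprocessing of the template''---is not available: the template $\bA$ is fixed, and adding $\Feas(R)$ or $\Opt(R)$ to it is precisely what the lemma has to justify; these crisp relations are genuinely different valued relations from $R$ and are not obtained by any per-constraint rewriting of fixed size.

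A correct argument must simulate the crisp operators by \emph{instance-dependent weight amplification}: e.g.\ for $\Opt(R)$ one takes polynomially many copies of $R(\bar x)$, enough that the penalty $N\delta$ (with $\delta$ the template-constant gap between the minimum and the second-smallest value of $R$) exceeds any possible compensation from the rest of the instance, and shifts the threshold by $N\cdot\min R$; for $\Feas(R)$ one instead amplifies the \emph{rest} of the instance. Both steps additionally need a granularity argument: the achievable costs lie in a lattice $\frac{1}{D}\mathbb{Z}$ for a template constant $D$ (by oligomorphicity each relation attains finitely many rational values), so the input threshold may first be rounded down to this lattice---otherwise the amplification factor would have to depend on the denominator of $u$ and the number of copies could blow up exponentially in the bit-size of the input. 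So the reduction is polynomial but emphatically not ``a fixed per-template gadget with blow-up linear in $|\phi|$''. Your handling of sums, projection (fresh variables, minimum attained by oligomorphicity), shifting, and constant-denominator scaling is fine, and the overall induction over a fixed finite derivation of each $R^{\bB}$ is the right skeleton; but without the amplification-plus-granularity argument for $\Feas$ and $\Opt$ the reduction as described is simply incorrect.
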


We now introduce notation that enables us to talk about the crisp relations \checked{pp-}expressible in a valued structure, which turn out to be essential to understanding temporal VCSPs.
\begin{definition}
Let $\bA$ be a~valued structure. Then $\langle \bA \rangle_0^\infty$ denotes the set of valued relations 
$\{R \in \langle \bA \rangle \mid R  \text{ of arity }k, \forall a~\in A^k \colon R(a) \in \{0,\infty\} \}.$
\end{definition}
In words, $\langle \bA \rangle_0^\infty$ contains all crisp relations that can be \checked{pp-}expressed in $\bA$.
If $\bA$ is essentially crisp, then 
$\langle \bA \rangle = \langle \Feas(\bA) \rangle$
and $\langle \bA \rangle_0^\infty$
consists of precisely those relations that are primitively positively definable in $\Feas(\bA)$. In this case we obtain a~polynomial-time reduction from $\VCSP(\bA)$ to $\CSP(\Feas(\bA))$ and vice versa by Lemma~\ref{lem:expr-reduce}.

\subsection{Pp-constructions}
Next, we introduce a~concept of pp-constructions which give rise to poly\-no\-mial-time reductions between VCSPs. 

\begin{definition}[pp-power]
Let $\bA$ be a~valued structure with domain $A$ and let $d \in {\mathbb N}$. Then a~($d$-th) \emph{pp-power}
of $\bA$ is a~valued structure $\bB$ with domain 
$A^d$ such that for every valued relation $R$ of $\bB$ of arity $k$ there exists a~valued relation $S$ of arity $kd$ in $\langle \bA \rangle$ such that 
\[R((a^1_1,\dots,a^1_d),\dots,(a^k_1,\dots,a^k_d)) = S(a^1_1,\dots,a^1_d,\dots,a^k_1,\dots,a^k_d).\]
\end{definition}

Let $A$ and $B$ be sets and $f: B \to A$. If $k \in \N$ and $s \in B^k$, then by $f(s)$ we mean the tuple $(f(s_1), \dots, f(s_k)) \in A^k.$
We equip the space $A^B$ of functions from $B$ to $A$ with the topology of pointwise convergence, where $A$ is taken to be discrete. 
In this topology, a~basis of open sets is given by
\[{\mathscr S}_{s,t} := \{f \in A^B \mid f(s)=t\}\]
for $s \in B^k$ and $t \in A^k$ for some $k \in {\mathbb N}$.
For any topological space $T$, we denote by $\mathcal{B}(T)$
the Borel $\sigma$-algebra on $T$, i.e., the smallest subset of the powerset ${\mathcal P}(T)$ which contains all open sets and is closed under countable intersection and complement. We write $[0,1]$ for the set $\{x \in {\mathbb R} \mid 0 \leq x \leq 1\}$.

\begin{definition}[fractional map]
Let $A$ and $B$ be sets. A~\emph{fractional map} from $B$ to $A$ is a~probability distribution 
$(A^B, \mathcal{B}(A^B),\omega \colon \mathcal{B}(A^B) \to [0,1]),$
that is, $\omega(A^B) = 1$ and $\omega$ is countably additive: if $S_1, S_2,\dots \in \mathcal{B}(A^B)$ are disjoint, then \[\omega\left(\bigcup_{i \in {\mathbb N}} S_i\right) = \sum_{i \in {\mathbb N}} \omega(S_i).\]
\end{definition}

We often use $\omega$ for both the entire fractional map and for the map $\omega \colon \mathcal{B}(A^B) \to [0,1]$.

The set $[0,1]$ carries the topology inherited from  the standard topology on ${\mathbb R}$. We also view ${\mathbb R} \cup \{\infty\}$ as a~topological space with a~basis of open sets given by
all open intervals
$(a,b)$ for $a,b \in {\mathbb R}$, $a<b$ and additionally all sets of the form $\{x \in {\mathbb R} \mid x > a\} \cup \{\infty\}$ (thus, $\mathbb{R} \cup \{ \infty \}$ is equipped with its order topology when ordered in the natural way).

A \emph{(real-valued) random variable} is a~\emph{measurable function} $X \colon T \to {\mathbb R} \cup \{\infty\}$, i.e., pre-images of elements of $\mathcal{B}({\mathbb R} \cup \{\infty\})$ under $X$ are in $\mathcal{B}(T)$. 
If $X$ is a~real-valued random variable, then the \emph{expected value of $X$ (with respect to a~probability distribution $\omega$)} is denoted by $E_\omega[X]$ and is defined 
via the Lebesgue integral 
\[ E_\omega[X] := \int_T X d \omega. \]

Let $A$ and $B$ be sets. In the rest of the paper, we will work exclusively on a~topological space $A^B$ and the special case where $B=A^\ell$ for some $\ell \in \N$ and $A^B$ is the set of $\ell$-ary operations on $A$, we denote this set by ${\mathscr O}_A^{(\ell)}$.

\begin{definition}[fractional homomorphism]\label{def:frac-hom}
Let $\bA$ and $\bB$ be valued $\tau$-structures with domains $A$ and $B$, respectively. A~\emph{fractional homomorphism} from
$\bB$ to $\bA$ is a~
fractional map $\omega$ from $B$ to $A$
 such that for every $R \in \tau$ of arity $k$ 
and every tuple $t \in B^k$ 
it holds for the random variable $X \colon A^B \rightarrow \mathbb{R}\cup\{\infty\}$ given  by
$f \mapsto R^{\bA}(f(t))$ that 
$E_\omega[X]$
exists and that 
$E_\omega[X]
\leq R^{\bB}(t).$
\end{definition}

We refer to \cite{Resilience-VCSPs} for a~detailed introduction to fractional homomorphisms in full generality. \checked{We point out that fractional maps compose and the composition operation generalizes the standard composition of operations. Moreover, a composition of fractional homomorphisms is a fractional homomorphism~\cite[Proposition 5.9]{Resilience-VCSPs-arxiv}.}
Observe that if $\bA$ is a~countable valued structure and $R$ is a valued relation of $\bA$, we have the following handy expression for $E_\omega[f \mapsto R(f(t))]$, which we sometimes use in the proofs (see~\cite{Resilience-VCSPs} for a detailed derivation):
\begin{equation}
\label{eq:expr-expect}
E_{\omega}[f \mapsto R(f(t))] = \sum_{s \in A^k} R(\checked{s}) \omega(
{\mathscr S}_{t,s}).
\end{equation}

All concrete fractional homomorphisms $\omega$ that appear in this paper are of a~very special form, namely, there is a~single $f \in A^B$ such that $\omega(\{f\}) =1$. In this case, we also write $f$ instead of $\omega$.
If $\omega$ is of this form, then for every $R \in \tau$ of arity $k$ and $t \in B^k$, the expected value in Definition~\ref{def:frac-hom} always exists and is equal to $R^\bA(f(t))$.
If additionally $\bA$ and $\bB$ are crisp structures, then we call $f$ a~\emph{homomorphism}. It is easy to see that there are \emph{valued} structures $\bA$ and $\bB$ with a~fractional homomorphism from $\bB$ to $\bA$, but no fractional homomorphism from $\bB$ to $\bA$ of the form $f \in A^B$ (see, e.g.~\cite[Example 2.6 and 3.15]{ThesisZaneta}). 
  
\begin{lemma}\label{lem:frac-hom}
Let $\bA$ and $\bB$ be 
valued $\tau$-structures on countable domains such that $\Aut(\bA)$ is oligomorphic. If there exists a~fractional homomorphism from $\bB$ to $\bA$, then there also exists a~homomorphism from $\Feas(\bB)$ to $\Feas(\bA)$.
In particular, if $\bA$ and $\bB$ are crisp, then there is a fractional homomorphism from $\bB$ to $\bA$ if and only if there is a homomorphism from $\bB$ to $\bA$. 
\end{lemma}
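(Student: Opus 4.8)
The plan is to prove the first statement—existence of a fractional homomorphism from $\bB$ to $\bA$ implies existence of a homomorphism from $\Feas(\bB)$ to $\Feas(\bA)$—and then derive the crisp special case from it together with Lemma~\ref{lem:frac-hom} applied in the trivial direction (a homomorphism is a fractional homomorphism of the special form $f \in A^B$).

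\textbf{Sketch of the main implication.} Let $\omega$ be a fractional homomorphism from $\bB$ to $\bA$. First I would set up a candidate homomorphism by a compactness/König's-lemma argument: since $\Aut(\bA)$ is oligomorphic and $A$ is countable, I want to build a map $h \colon B \to A$ such that $h$ is a homomorphism $\Feas(\bB) \to \Feas(\bA)$. The key local observation is the following: for each $R \in \tau$ of arity $k$ and each $t \in B^k$ with $R^{\bB}(t) < \infty$, the inequality $E_\omega[X] \leq R^{\bB}(t) < \infty$ for the random variable $X \colon f \mapsto R^{\bA}(f(t))$ forces that $\omega(\{f : R^{\bA}(f(t)) = \infty\}) = 0$; otherwise the integral would be $+\infty$ (using the convention $a \cdot \infty = \infty$ for $a>0$ together with the expression~\eqref{eq:expr-expect}, or directly the definition of the Lebesgue integral of a function attaining the value $\infty$ on a set of positive measure). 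Hence $\omega$-almost every $f$ satisfies $f(t) \in \Feas(R^{\bA})$, i.e. $f$ is, "locally at $t$", a homomorphism with respect to that constraint.

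\textbf{From almost-everywhere to a single map.} Now $\Feas(\bB)$ has only countably many atomic facts $R(t)$ that hold (since $B$ is countable and $\tau$ is finite—or at least countable—and each relation has finite arity), say enumerate them as $R_1(t^{(1)}), R_2(t^{(2)}), \dots$. For each $i$ the set $N_i := \{f \in A^B : f(t^{(i)}) \notin \Feas(R_i^{\bA})\}$ is Borel (a finite union of basic open sets $\mathscr S_{t^{(i)},s}$ over the finitely many... wait—infinitely many—tuples $s$; but still a countable union, hence Borel) and has $\omega$-measure $0$ by the previous paragraph. Therefore $\omega\big(\bigcup_i N_i\big) = 0$ by countable subadditivity, so $\omega$-almost every $f$ lies outside every $N_i$. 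Since $\omega$ is a probability measure, there exists at least one such $f$; fix it and call it $h$. By construction $h$ respects every atomic fact of $\Feas(\bB)$, i.e. whenever $R^{\bB}(t) < \infty$ we have $R^{\bA}(h(t)) < \infty$, which is exactly the statement that $h \colon \Feas(\bB) \to \Feas(\bA)$ is a homomorphism. (Note oligomorphicity of $\Aut(\bA)$ is what we silently used earlier to guarantee that $R^{\bA}$ takes only finitely many values, so that "$R^{\bA}(f(t)) = \infty$ on a positive-measure set" genuinely makes the integral diverge rather than produce some subtle conditionally-convergent situation; with finitely many values there is no such subtlety.)

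\textbf{The crisp special case and main obstacle.} For the "in particular" clause: if $\bA, \bB$ are crisp then $\Feas(\bB) = \bB$ and $\Feas(\bA) = \bA$ as relational structures, so one direction is immediate from the above, and the converse holds because a homomorphism $f$ between crisp structures is a fractional homomorphism concentrated on $\{f\}$ (as noted right before the lemma statement, with $E_\omega[X] = R^{\bA}(f(t)) \leq R^{\bB}(t)$ holding because $f$ maps tuples of $R^{\bB}$—i.e. of $\Feas(R^{\bB})$—into $R^{\bA}$). The main obstacle I anticipate is purely measure-theoretic bookkeeping: carefully justifying that $N_i$ is Borel and that $E_\omega[X] = \infty$ when $\omega(N_i) > 0$, handling the value $\infty$ and the extended-real-valued integral correctly, and making sure the enumeration of atomic facts of $\Feas(\bB)$ is legitimate (this needs $\tau$ finite, which is assumed for VCSP templates, or at least countable). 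None of these steps is deep, but the convention "$0 \cdot \infty = 0$, $a \cdot \infty = \infty$ for $a > 0$" must be invoked exactly when splitting the integral over $N_i$ and its complement.
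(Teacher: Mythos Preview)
Your proof is correct, and it takes a genuinely different route from the paper's argument. The paper proceeds via a compactness step: using that $\Aut(\bA)$ (hence $\Aut(\Feas(\bA))$) is oligomorphic, it invokes a standard lemma to reduce to showing that every \emph{finite} substructure $\bF$ of $\Feas(\bB)$ maps homomorphically to $\Feas(\bA)$. For such an $\bF$ with element tuple $b = (b_1,\dots,b_n)$, countable additivity of $\omega$ yields some $a \in A^n$ with $\omega(\mathscr S_{b,a}) > 0$, and the map $b \mapsto a$ is then shown to be a homomorphism by exactly your local observation (positive measure on $\mathscr S_{t,f(t)}$ with $R^{\bA}(f(t)) = \infty$ forces $E_\omega[X] = \infty$, hence $R^{\bB}(t) = \infty$).

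Your argument bypasses compactness entirely: you take the countable union of the null sets $N_i$ directly and extract a single global $h$ from the complement. This is more self-contained, and in fact does not need oligomorphicity at all: the implication ``$\omega(N_i) > 0 \Rightarrow E_\omega[X] = \infty$'' already follows from the requirement that $E_\omega[X]$ exists (built into Definition~\ref{def:frac-hom}), since $\int X^+ d\omega = \infty$ then forces $\int X^- d\omega < \infty$ and hence $E_\omega[X] = \infty$; there is no need for $R^{\bA}$ to have finite range. The price you pay is the hypothesis that $\tau$ be countable so that the atomic facts of $\Feas(\bB)$ can be enumerated, which you correctly flag. The paper's compactness-based proof, by contrast, uses oligomorphicity essentially but works for arbitrary $\tau$. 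In the intended setting of the paper (finite signatures) this trade-off is immaterial, and your approach is arguably cleaner.
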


\begin{proof}
Suppose that there exists a fractional homomorphism $\omega$ from $\bB$ to $\bA$. 
Since $B$ is countable and $\Aut(\bA) \subseteq \Aut(\Feas(\bA))$ is oligomorphic, it suffices to show that every finite substructure $\bF$ of $\Feas(\bB)$ has a~homomorphism to $\Feas(\bA)$ (see, e.g.,~\cite[Lemma 4.1.7]{Book}). 
Let $b_1,\dots,b_n$ be the elements of $\bF$ and $b:=(b_1, \dots, b_n)$. 
By the (countable) additivity of probability distributions, there
exists $a \in A^n$ such that $\omega(\mathscr{S}_{b,a})>0$.
Let $f$ be the map that takes $b$ to $a$. 
Suppose that there exists $R \in \tau$ of arity $k$ and $t \in F^k$ such that  
$R^{\bA}(f(t)) = \infty$. 
Since $\omega(\mathscr{S}_{b,a})>0$ and $\mathscr{S}_{b,a} \subseteq {\mathscr{S}}_{t,f(t)}$, we have  $\omega({\mathscr{S}}_{t,f(t)})>0$, and thus $E_\omega[g \mapsto R^{\bA}(g(t))] = \infty$ by~\eqref{eq:expr-expect}. 
Then $R^{\bB}(t) = \infty$, because $\omega$ is a~fractional polymorphism.
Hence, for every $R \in \tau$ of arity $k$ and $t \in F^k$ we have 
$R^{\bA}(f(t)) < \infty$ 
whenever 
$R^{\bB}(t) < \infty$. Therefore,  $f$ is a~homomorphism from $\bF$ to $\Feas(\bA)$.

The final statement follows from the fact that every homomorphism is a fractional homomorphism and that $\bC = \Feas(\bC)$ for every crisp structure $\bC$. 
\end{proof}

We say that two valued $\tau$-structures  $\bA$ and $\bB$ are \emph{fractionally homomorphically equivalent} if there exists a fractional homomorphisms from $\bA$ to $\bB$ and from $\bB$ to $\bA$. Since fractional homomorphisms compose~\cite{Resilience-VCSPs}, fractional homomorphic equivalence is indeed an equivalence relation on valued structures of the same signature. 

\begin{definition}[pp-construction]
\label{def:pp-constr}
Let $\bA, \bB$ be valued structures. Then  $\bB$ has a~\emph{pp-construction} in $\bA$ if $\bB$ 
is fractionally homomorphically equivalent to a~structure $\bB'$ which is a~pp-power of $\bA$. 
\end{definition}

\begin{remark}
We remark that our definition of pp-constructability (Definition~\ref{def:pp-constr}) applied to relational structures with oligomorphic automorphism groups coincides with the definition from~\cite{wonderland}. Let $\bA$ and $\bB$ be relational structures, and suppose that $\bA$ pp-constructs $\bB$, that is, there is a pp-power $\bA'$ of $\bA$ which is fractionally homomorphically equivalent to $\bB$.
Since $\bA$ is crisp, $\bA'$ is essentially crisp and $\Feas(\bA')$ is a pp-power of $\bA$ with all relations primitively positively definable in $\bA$ when viewed over the domain $A$. By Lemma~\ref{lem:frac-hom}, $\Feas(\bA')$ is homomorphically equivalent to $\bB$. 
This shows the claim. 
\end{remark}
The relation of pp-constructability is transitive: if $\bA$, $\bB$, and $\bC$ are valued structures such that $\bA$ pp-constructs $\bB$ and $\bB$ pp-constructs $\bC$, then $\bA$ pp-constructs $\bC$ \cite[Lemma 5.14]{Resilience-VCSPs}.

By $K_3$ we denote the complete graph on $3$ vertices.
The following is a~direct consequence of~\cite[Corollary 5.13]{Resilience-VCSPs} and \cite[Corollary 6.7.13]{Book}.
\begin{lemma}\label{lem:hard} 
Let $\bA$ be a~valued structure such that $\Aut(\bA)$ is oligomorphic. If $\bA$ pp-constructs $K_3$, then $\bA$ has a reduct $\bA'$ over a finite signature such that $\vcsp(\bA')$ is NP-hard.
\end{lemma}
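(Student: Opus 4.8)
The plan is to combine the VCSP reduction machinery for pp-constructions with the classical NP-hardness of $3$-colourability. First I would unfold the hypothesis: by definition, $\bA$ pp-constructs $K_3$ means there is a pp-power $\bB'$ of $\bA$ that is fractionally homomorphically equivalent to $K_3$. Since $\bB'$ has the same signature as $K_3$ (one binary symbol), it is given by a single valued relation $S$ of arity $2d$ lying in $\langle \bA \rangle$. Tracing through the closure operations generating the valued relational clone, $S$ is obtained from only finitely many relations of $\bA$; collect these into a finite signature $\sigma$ and let $\bA'$ be the corresponding reduct of $\bA$. Then $S \in \langle \bA' \rangle$, so $\bB'$ is still a pp-power of $\bA'$ and $\bA'$ pp-constructs $K_3$. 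Moreover $\Aut(\bA') \supseteq \Aut(\bA)$, hence $\Aut(\bA')$ is oligomorphic, and $\bA'$ has a finite signature, so $\VCSP(\bA')$ is well-defined.

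Next I would apply \cite[Corollary 5.13]{Resilience-VCSPs}: a pp-construction yields a polynomial-time reduction between the corresponding VCSPs (for finite-signature structures with oligomorphic automorphism group), so $\VCSP(K_3)$ reduces in polynomial time to $\VCSP(\bA')$. Finally, by Remark~\ref{rem:crisp-expr} the problem $\VCSP(K_3)$ coincides with $\CSP(K_3)$, i.e.\ the $3$-colourability problem, which is NP-hard by \cite[Corollary 6.7.13]{Book}. Therefore $\VCSP(\bA')$ is NP-hard, as claimed.

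The statement is essentially a direct consequence of the two cited results, so the only non-routine point is the bookkeeping in the first step: checking that a pp-construction of a finite-signature structure can always be realised using only finitely many relations of $\bA$, so that we genuinely land inside a finite reduct on which $\VCSP$ makes sense. Everything after that is an immediate composition of reductions. If one preferred not to cite \cite[Corollary 5.13]{Resilience-VCSPs} as a black box, an alternative is to build the reduction by hand: Lemma~\ref{lem:expr-reduce} applied to $S$ viewed as a valued relation over $A$ reduces the VCSP of the pp-power to $\VCSP(\bA')$, and the fractional homomorphic equivalence together with Lemma~\ref{lem:frac-hom} handles the passage between $\bB'$ and $K_3$; but since both ingredients are already available in the form cited, invoking them directly is cleaner.
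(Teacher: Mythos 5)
Your proposal is correct and follows essentially the same route as the paper, which derives the lemma directly from \cite[Corollary 5.13]{Resilience-VCSPs} together with the NP-hardness of $\CSP(K_3)$ \cite[Corollary 6.7.13]{Book}. The finite-reduct bookkeeping you spell out (that the relation of the pp-power lies in $\langle \bA' \rangle$ for a finite-signature reduct $\bA'$, since the clone operations are finitary) is exactly the step the paper leaves implicit by phrasing the conclusion in terms of a reduct, so nothing is missing.
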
 

It is well-known that $K_3$ pp-constructs all finite relational structures (see, e.g, \cite[Corollary 6.4.4]{Book}). Hence, by the transitivity of pp-constructability, every valued structure that pp-constructs $K_3$ pp-constructs all finite relational structures. 

\subsection{Fractional polymorphisms}
Let $A$ be a~set and $R \subseteq A^k$. An operation $f \colon A^\ell \to A$ on the set $A$ \emph{preserves} $R$ if $f(t^1, \dots, t^\ell) \in R$ for every
$t^1,\dots, t^\ell \in R$.
If $\bA$ is a~relational structure and $f$ preserves all relations of $\bA$, then $f$ is called a~\emph{polymorphism} of $\bA$. The set of all polymorphisms of $\bA$ is denoted by $\Pol(\bA)$ and is closed under composition. We write $\Pol^{(\ell)}(\bA)$ for the set $\Pol(\bA) \cap \mathscr{O}_A^{(\ell)}$, $\ell \in \N$. Unary polymorphisms are called \emph{endomorphisms} and $\Pol^{(1)}(\bA)$ is also denoted by $\End(\bA)$. 

Let $\bA$ be a~relational structure and $\ell \in \N$. An operation $f \in \Pol^{(\ell)}(\bA)$ is called a~\emph{pseudo weak near unanimity (pwnu)} polymorphism if there exist $e_1, \dots, e_\ell \in \End(\bA)$ such that for every $x,y \in A$, $e_1f(y,x,\dots,x) = e_2 f(x,y,x,\dots,x) = \dots = e_\ell f(x,\dots,x,y).$
We now introduce  \emph{fractional polymorphisms} of valued structures, which generalize polymorphisms of relational structures.  Similarly to polymorphisms, fractional polymorphisms are an important tool for formulating tractability results and complexity classifications of VCSPs.
For valued structures with a~finite domain, our definition specialises to the established notion of a~fractional polymorphism which has been used 
to study the complexity of VCSPs for valued structures over finite domains (see, e.g.~\cite{ThapperZivny13}). Our definition is taken from \cite{Resilience-VCSPs} and allows arbitrary probability spaces in contrast to \cite{ViolaThesis,SchneiderViola,ViolaZivny}.

\begin{definition}[fractional operation]
Let $\ell \in {\mathbb N}$. 
A \emph{fractional operation on $A$ of arity $\ell$} is
a fractional map from $A^\ell$ to $A$. 
The set of all fractional operations on $A$ of arity $\ell$ is denoted by ${\mathscr F}^{(\ell)}_A$.
\end{definition}

\begin{definition}\label{def:pres}
A fractional operation $\omega \in {\mathscr F}_A^{(\ell)}$ \emph{improves}
a valued relation $R \in {\mathscr R}^{(k)}_A$ if for all $t^1,\dots,t^{\ell} \in A^k$ 
\begin{align}
E := E_\omega[f \mapsto R(f(t^1,\dots,t^{\ell}))]
\text{ exists, and } \quad 
E \leq
\frac{1}{\ell} \sum_{j = 1}^{\ell} R(t^j). \label{eq:fpol}
\end{align}
\end{definition}

Note that~\eqref{eq:fpol} has the interpretation that 
the expected value of $f \mapsto R(f(t^1,\dots,t^\ell))$ is at most the average of the values $R(t^1),\dots$, $R(t^\ell)$. 

\begin{definition}[fractional polymorphism]
A~fractional operation which improves every valued relation in a~valued structure $\bA$ is called a~\emph{fractional polymorphism 
of $\bA$}; the set of all fractional polymorphisms of $\bA$ is denoted by $\fPol(\bA)$.
\end{definition}

\begin{remark}\label{rem:frac-pol-frac-hom}
A fractional polymorphism of arity $\ell$ of a~valued $\tau$-structure $\bA$ might also be viewed as a~fractional homomorphism from a~specific $\ell$-th pp-power of $\bA$, which we denote by $\bA^{\ell}$, to $\bA$: 
the domain of $\bA^{\ell}$ is $A^{\ell}$,
and for every $R \in \tau$ of arity $k$ we have
\[R^{\bA^{\ell}}((a^1_1,\dots,a^1_{\ell}),\dots,(a^k_1,\dots,a^k_{\ell})) 
:= \frac{1}{\ell} \sum_{i=1}^\ell R^{\bA}(a^1_i,\dots,a^{k}_i).\]
\end{remark}

\begin{example}\label{expl:id}
Let $A$ be a~set and $\pi^\ell_i \in {\mathscr O}^{(\ell)}_A$ be the $i$-th projection of arity $\ell$, which is given by $\pi^\ell_i(x_1,\dots,x_\ell) = x_i$.
The fractional operation $\Id_\ell$ of arity $\ell$ such that $\Id_\ell(\pi^{\ell}_{i}) = \frac{1}{\ell}$ 
for every $i \in \{1,\dots,\ell\}$ is a~fractional polymorphism of every valued structure with domain $A$. 
\end{example}

As mentioned above, all concrete fractional polymorphisms $\omega$ that we need in this article are such that there exists an operation $f \in \mathscr{O}_A^{(\ell)}$ such that $\omega(\{f\})=1$.

\begin{lemma}[Lemma 6.8 in~\cite{Resilience-VCSPs}]\label{lem:easy-Imp-fPol}
Let $\bA$ be a~valued $\tau$-structure $\bA$ 
over a~countable domain $A$
and let $R$ be a valued relation over $A$. If 
$R \in \langle \bA \rangle$,
then $R$ is improved by all fractional polymorphisms of $\bA$.
\end{lemma}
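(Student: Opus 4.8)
I would fix a fractional polymorphism $\omega \in \fPol^{(\ell)}(\bA)$ and let $\mathcal I_\omega$ denote the set of all valued relations on $A$ that are improved by $\omega$. Since $\langle\bA\rangle$ is by definition the smallest valued relational clone containing the valued relations of $\bA$, it suffices to show that $\mathcal I_\omega$ contains those relations and is closed under each of the six operations defining valued relational clones: forming sums of atomic expressions, projecting, shifting, non-negative scaling, $\Feas$, and $\Opt$. Once this is established, $\mathcal I_\omega$ is itself a valued relational clone containing $\bA$, hence contains $\langle\bA\rangle$, which is the claim. That the relations of $\bA$ lie in $\mathcal I_\omega$ is exactly the hypothesis $\omega\in\fPol(\bA)$. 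Since atomic expressions may also involve $(=)_0^\infty$ and $(\emptyset)_0^\infty$, I would first record that these are in $\mathcal I_\omega$ too: for $(\emptyset)_0^\infty$ the right-hand side of \eqref{eq:fpol} is always $\infty$, and every operation applied componentwise preserves the diagonal, so in both cases \eqref{eq:fpol} is trivial.

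\textbf{Additive and affine closures.} Shifting and non-negative scaling are immediate, since expectation commutes with $X\mapsto rX+s$ for $r\in\Q_{\ge 0}$, $s\in\Q$ (using $0\cdot\infty=0$), so \eqref{eq:fpol}, and the existence of the relevant expectation, passes from $R$ to $rR+s$. For sums of atomic expressions, write $\phi=\sum_i R_i(\dots)$ with variables among $x_1,\dots,x_n$ and each $R_i$ either a relation of $\bA$ or one of $(=)_0^\infty$, $(\emptyset)_0^\infty$. For tuples $t^1,\dots,t^\ell\in A^n$ and any operation $f$, applying $f$ componentwise commutes with selecting the subtuples of $t^1,\dots,t^\ell$ that feed into each $R_i$, so $\phi^{\bA}(f(t^1,\dots,t^\ell))$ is the finite sum over $i$ of the corresponding values of $R_i$. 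Linearity of expectation over this finite sum, together with the inductive bound for each $R_i$, yields \eqref{eq:fpol} for $\phi$ after exchanging the two finite summations.

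\textbf{The ``squeeze'' closures: $\Feas$, $\Opt$, projecting.} For these I would use the elementary fact that a random variable that is almost surely $\ge c$ and has expectation $\le c$ equals $c$ almost surely. For $\Feas(R)$: given argument tuples all in $\Feas(R)$ (otherwise \eqref{eq:fpol} is vacuous), the inductive bound makes $E_\omega[f\mapsto R(f(\dots))]$ finite, so $f(\dots)\in\Feas(R)$ for $\omega$-almost all $f$, which is \eqref{eq:fpol} for $\Feas(R)_0^\infty$. For $\Opt(R)$: given argument tuples all in $\Opt(R)$, put $m:=\min R$; then $R(f(\dots))\ge m$ always while the inductive bound gives expectation $\le m$, so $f(\dots)\in\Opt(R)$ a.s. For projecting, with $R'(s)=\inf_t R(s,t)$: given $s^1,\dots,s^\ell$ and $\varepsilon>0$ choose extensions $u^j$ with $R(s^j,u^j)\le R'(s^j)+\varepsilon$; then $R'(f(s^1,\dots,s^\ell))\le R\big(f((s^1,u^1),\dots,(s^\ell,u^\ell))\big)$ pointwise in $f$, so monotonicity of expectation and the inductive bound for $R$ give $E_\omega[f\mapsto R'(f(s^1,\dots,s^\ell))]\le\frac1\ell\sum_j R'(s^j)+\varepsilon$; letting $\varepsilon\to 0$ finishes it.

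\textbf{Main obstacle.} I expect the only genuine work to be the measure-theoretic bookkeeping rather than any single clever step: because ``$\omega$ improves $R$'' demands that the expectation \emph{exist}, the induction must propagate existence through every clone operation (immediate for affine maps and finite sums once one has first separated off argument tuples on which $\phi^{\bA}$ or $R$ takes the value $\infty$; for the squeeze steps it follows from the bounds just described together with the fact that the random variables in play are discrete, since $A$ is countable), and one must be attentive to the value $\infty$ throughout and to the degenerate scaling factor $r=0$. A minor additional point is that in the projecting step one is implicitly assuming the infimum defining $R'$ lies in $\Q\cup\{\infty\}$, so that $R'$ is genuinely a valued relation; this is the standing convention for the clone operations and is automatic whenever $R$ attains only finitely many values.
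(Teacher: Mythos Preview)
The paper does not prove this lemma; it is quoted verbatim as Lemma~6.8 of~\cite{Resilience-VCSPs} with no argument given here. So there is no ``paper's own proof'' to compare against, and your proposal is supplying a proof where the present paper offers none.

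Your approach---showing that the set $\mathcal I_\omega$ of valued relations improved by a fixed $\omega\in\fPol^{(\ell)}(\bA)$ is a valued relational clone containing the relations of $\bA$---is exactly the right one and is the standard route to this kind of statement. The individual closure arguments you sketch (affine transforms via linearity of expectation, sums via finite additivity, $\Feas$ and $\Opt$ via the squeeze trick, projections via near-optimal witnesses and monotonicity) are all correct.

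One point is not fully handled in your sketch: in the projecting step, when some $R'(s^{j})=\infty$ you still need the expectation $E_\omega[f\mapsto R'(f(s^1,\dots,s^\ell))]$ to \emph{exist}, but your bound $X\le Y$ with $E_\omega[Y]$ finite is only available when all $R'(s^j)<\infty$. In the remaining case the right-hand side of \eqref{eq:fpol} is $\infty$, so the inequality is free, but existence of the integral is not automatic from $X\le Y$ alone (one would need $E_\omega[Y^+]<\infty$). This is exactly the ``measure-theoretic bookkeeping'' you flag as the main obstacle; your proposal names it but does not quite close it. In the oligomorphic setting used throughout this paper the issue evaporates (every $R\in\langle\bA\rangle$ attains finitely many values, so all the random variables are bounded), which is why the present paper can simply cite the lemma and move on.
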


\begin{remark}\label{rem:oligo}
    If $\bA$ is a valued structure such that $\Aut(\bA)$ is oligomorphic, then the converse of Lemma~\ref{lem:easy-Imp-fPol} might be true as well; this is for instance the case if $\bA$ is a \checked{relational} structure and $R$ a \checked{crisp} relation~\cite{BodirskyNesetrilJLC}.
\end{remark}

\begin{remark} \label{rem:ess-crisp-fPol}
Observe that $\Pol(\bA) \subseteq \fPol(\bA)$ for every crisp structure $\bA$. 
More concretely, for every $\ell \in \N$ we have that $\fPol^{(\ell)}(\bA)$ consists of precisely the fractional operations $\omega$ of arity $\ell$ such that $\omega(\Pol^{(\ell)}(\bA))=1$.
Also recall that, for essentially crisp valued structures $\bA$, $\langle \bA \rangle = \langle \Feas(\bA) \rangle$. 
Therefore,  Lemma~\ref{lem:easy-Imp-fPol} implies that $\fPol(\bA)=\fPol(\Feas(\bA))$.
\end{remark}

\subsection{Temporal valued structures}
Recall that a temporal valued \checked{structure} is a a~valued structure $\bA$ such that $\Aut(\Q; <) \subseteq \Aut(\bA)$. 
Every $\tau$-expression $\phi$ (in particular, every valued relation of $\bA$) attains only finitely many values in $\bA$: if $\phi = \phi(x_1, \dots, x_k)$, we may have at most one value for every orbit of the action of $\Aut(\bA)$ on $\Q^k$. Since the group $\Aut(\Q;<)$ has only finitely many orbits of $k$-tuples for every $k$, so does $\Aut(\bA)$. In particular, if $k=2$ and $a,b \in \Q$, $a<b$, then the values $\phi(a,a)$, $\phi(a,b)$, and $\phi(b,a)$ do not depend on the choice of $a$ and $b$. If $\Aut(\bA)=\Aut(\Q;=)$, then we additionally have $\phi(a,b)=\phi(b,a)$. 
As a~consequence, for every $\tau$-expression $\phi(x_1, \dots, x_k, y_1, \dots, y_\ell)$ and $b \in A^\ell$ there exists $a^* \in A^k$ such that \[\inf_{a \in A^k} \phi(a,b) = \min_{a \in A^k} \phi(a, b) =\phi(a^*, b).\]

The structure $(\Q;<)$ is known to be finitely bounded and homogeneous, which yields the following theorem as 
a~special case of \cite[Theorem 3.4]{Resilience-VCSPs}. 
\begin{theorem}\label{thm:NP}
Let $\bA$ be a~valued structure over a~finite signature such that $\Aut(\Q;<) \subseteq \Aut(\bA)$. Then $\VCSP(\bA)$ is in NP. 
\end{theorem}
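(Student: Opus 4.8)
The plan is to show $\VCSP(\bA)\in\mathrm{NP}$ by exhibiting a succinct certificate for positive instances together with a polynomial-time verifier. The starting point is the observation, already exploited in the paragraph preceding the statement, that since $\Aut(\Q;<)\subseteq\Aut(\bA)$, for every $\tau$-expression $\phi(x_1,\dots,x_n)$ the value $\phi^{\bA}(t)$ depends only on the orbit of $t\in\Q^n$ under the action of $\Aut(\Q;<)$ on $\Q^n$. These orbits are in bijection with the weak linear orders on $[n]$, i.e.\ with ordered partitions of $[n]$ (recording which coordinates are equal and in which order the blocks lie), each of which can be encoded with $O(n\log n)$ bits; there are only finitely many of them, and consequently $\inf_{t\in\Q^n}\phi^{\bA}(t)=\min_{t\in\Q^n}\phi^{\bA}(t)$, the minimum ranging over one representative tuple per orbit.

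The certificate for a positive instance $(\phi,u)$, where $\phi=\sum_{i\le m}\phi_i$ and $\phi=\phi(x_1,\dots,x_n)$, is an ordered partition $P$ of $[n]$. To verify $P$, one picks the representative tuple $t^{P}\in\Q^n$ assigning to the $j$-th block of $P$ the value $j$ (so $t^P$ has coordinates in $\{1,\dots,n\}$, of bounded bit-length), and computes $\phi^{\bA}(t^{P})=\sum_{i\le m}\phi_i^{\bA}(t^{P})$ using the conventions for $\infty$. Each atomic summand is of the form $R(x_{i_1},\dots,x_{i_k})$, or $(=)_0^\infty(x_1,x_2)$, or $(\emptyset)_0^\infty(x_1)$; the orbit of $(t^{P}_{i_1},\dots,t^{P}_{i_k})$ under $\Aut(\Q;<)$ is determined by the restriction of $P$ to $\{i_1,\dots,i_k\}$, and since $\bA$ is a fixed valued structure over a finite signature, the value of $R^{\bA}$ on each of the finitely many orbits of $k$-tuples is a fixed rational (or $\infty$) available from a fixed finite table. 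Hence each summand is evaluated in constant time, the sum of the $m$ bounded-bit-length rationals has bit-length polynomial in $m$, and the verifier accepts iff this sum is at most $u$.

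Correctness follows from orbit-invariance: since $\phi^{\bA}$ is constant on orbits and there are finitely many of them, $\{\phi^{\bA}(t):t\in\Q^n\}=\{\phi^{\bA}(t^{P}):P\text{ an ordered partition of }[n]\}$, so $(\phi,u)$ is a positive instance iff $\min_{P}\phi^{\bA}(t^{P})\le u$ iff some certificate is accepted. The verifier runs in time polynomial in $|\phi|+|u|$: constructing $t^P$ and evaluating each of the $m$ atomic expressions is polynomial in $n$ and $m$ (arities are bounded by the fixed signature), and summing and comparing the resulting rationals is likewise polynomial.

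I do not expect a genuine obstacle here; the two points that need care are (i) that representative tuples can be chosen with polynomially bounded encodings, which is immediate because a weak order on $n$ elements is realized by integers in $\{1,\dots,n\}$, and (ii) that the minimum over tuples is attained and equals the minimum over the finitely many orbits, which is exactly the consequence of oligomorphicity of $\Aut(\Q;<)$ recorded before the statement. The essential structural fact making the procedure polynomial rather than merely computable is that $\bA$ is fixed, so that looking up $R^{\bA}$ on a given orbit is an $O(1)$ table lookup and not part of the input.
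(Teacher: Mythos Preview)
Your argument is correct. The paper does not give its own proof of this statement; it simply records that the theorem is a special case of \cite[Theorem~3.4]{Resilience-VCSPs}, which treats valued structures with an oligomorphic automorphism group in general. Your write-up is therefore a direct, self-contained instantiation of that general result to the temporal setting: the orbits of $n$-tuples under $\Aut(\Q;<)$ are precisely the weak linear orders on $[n]$, so a witness can be encoded as an ordered partition and verified by a table lookup per atomic summand. The only thing your version buys over the citation is explicitness; conversely, the cited result covers arbitrary oligomorphic groups, not just $\Aut(\Q;<)$, at the cost of needing a slightly more abstract description of orbit representatives.
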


To increase readability of some of the proofs, we
we will often use the following notation.
\begin{definition}[$E_t, N_t, O_t$]\label{def:E-N-O}
If $t \in \Q^k$ for some $k \in \N$, we define
\begin{align*}
E_t &:=\{(p,q) \in [k]^2 \mid t_p=t_q\},  \\
N_t &:=\{(p,q) \in [k]^2 \mid t_p\neq t_q\}, \text{ and} \\
O_t &:=\{(p,q) \in [k]^2 \mid t_p<t_q\},
\end{align*}
where $<$ is the natural order over $\Q$.
\end{definition}

As we already alluded to, we will repeatedly use the fact that in temporal VCSPs, any valued relation $R$ is such that $R(t)$ only depends on the \emph{order type} of the tuple $t$.
\begin{observation}\label{obs:order-type}
Let $\bA$ be valued structure with finite signature $\tau$ such that $\Aut(\Q;<) \subseteq \Aut(\bA)$ and let $R \in \tau$ be any valued relation of arity $k$.
Then for every $t, t' \in \Q^k$ such that $E_t = E_{t'}$ and $O_t = O_{t'}$ it holds that $R(t)=R(t')$.
\end{observation}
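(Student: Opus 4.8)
The plan is to prove Observation~\ref{obs:order-type}, which states that for a valued structure $\bA$ with $\Aut(\Q;<) \subseteq \Aut(\bA)$, the value $R(t)$ of a valued relation $R$ depends only on the data $E_t$ and $O_t$. The key point is the classical fact about the order $(\Q;<)$: it is homogeneous, and more concretely, any two finite tuples of rationals with the same \emph{order type} lie in the same orbit of $\Aut(\Q;<)$. Since $E_t$ and $O_t$ together encode exactly which coordinates are equal, which are strictly less, and (by complementation within $[k]^2$) which are strictly greater, they determine the order type of $t$ completely.

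First I would recall the elementary combinatorial statement: if $t, t' \in \Q^k$ satisfy $E_t = E_{t'}$ and $O_t = O_{t'}$, then the map sending $t_p \mapsto t'_p$ (for each $p \in [k]$) is well-defined on the set $\{t_1, \dots, t_k\}$ — well-definedness is precisely the condition $E_t = E_{t'}$ — and it is an order-isomorphism from $\{t_1,\dots,t_k\}$ onto $\{t'_1,\dots,t'_k\}$, which is exactly the condition $O_t = O_{t'}$ (together with its counterpart for the reversed inequality, which follows since $[k]^2$ is partitioned into $E_t \cup O_t \cup O_t^{-1}$). Then I would invoke the well-known back-and-forth/extension property of $(\Q;<)$: any order-isomorphism between two finite subsets of $\Q$ extends to an automorphism $\alpha \in \Aut(\Q;<)$ (this is just homogeneity of $(\Q;<)$, or can be seen directly by a one-step extension argument placing points in gaps). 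Such an $\alpha$ satisfies $\alpha(t) = t'$.

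Finally, since $\Aut(\Q;<) \subseteq \Aut(\bA)$, the permutation $\alpha$ is an automorphism of $\bA$, hence preserves the valued relation $R$; that is, $R(\alpha(t)) = R(t)$ by the definition of preservation in Section~\ref{sect:aut}. Combining, $R(t') = R(\alpha(t)) = R(t)$, which is what we wanted.

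I do not expect any genuine obstacle here: the statement is essentially a restatement of the homogeneity of $(\Q;<)$ together with the definition of automorphism. The only mild care needed is to check that $E_t$ and $O_t$ really do capture the full order type — i.e., that no further information is needed — which is immediate from the observation that for $p, q \in [k]$ exactly one of $t_p = t_q$, $t_p < t_q$, $t_q < t_p$ holds, so $E_t$ and $O_t$ determine the third relation by elimination. (Note that $R$ need not attain only finitely many values for this argument; the finiteness remarks earlier in the subsection are not needed here, only $\Aut(\Q;<) \subseteq \Aut(\bA)$.)
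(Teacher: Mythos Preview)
Your proof is correct. The paper does not give an explicit proof of this observation; it states it immediately after the paragraph explaining that automorphisms preserve values and that orbits of $\Aut(\Q;<)$ on $k$-tuples are determined by order type, so your argument simply spells out the details the paper leaves implicit (homogeneity of $(\Q;<)$ plus the definition of preservation).
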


\section{General facts}
\label{sect:gen-facts}
In this section we \checked{prove}
some relatively easy, but very general and useful facts for analysing the computational complexity of VCSPs. 

\begin{lemma}\label{lem:constant-pol}
Let $\bA$ be a~valued structure with domain $A$ and finite relational signature $\tau$ such that there exists a~unary constant operation $c \in \fPol(\bA)$. 
Then $\vcsp(\bA)$ is in P. 
\end{lemma}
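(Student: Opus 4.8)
The plan is to show that if $\bA$ has a unary constant fractional polymorphism $c$ (say with value $d \in A$), then every satisfiable instance of $\VCSP(\bA)$ attains its cost at the constant assignment $x_i \mapsto d$, and that satisfiability can be checked in polynomial time. First I would spell out what it means for $c$ to improve a valued relation $R$ of arity $k$: applying Definition~\ref{def:pres} with $\ell = 1$ and a single tuple $t^1 = t \in A^k$, we get $R(c(t)) \le R(t)$, i.e.\ $R(\underbrace{d,\dots,d}_{k}) \le R(t)$ for all $t \in A^k$. So for each relation symbol $R \in \tau$, the constant tuple $(d,\dots,d)$ is a minimum-value tuple of $R^{\bA}$; in particular $R^{\bA}(d,\dots,d) < \infty$ whenever $R^{\bA}$ is not identically $\infty$.

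Next I would lift this from single relations to arbitrary $\tau$-expressions. Given an instance $\phi(x_1,\dots,x_n) = \sum_{i\le m} \phi_i$ and threshold $u$, consider the assignment $t^* := (d,\dots,d) \in A^n$. For each atomic summand $\phi_i = R(x_{j_1},\dots,x_{j_k})$, the tuple $t^*$ restricted to the relevant coordinates is again $(d,\dots,d)$, so by the previous paragraph $\phi_i^{\bA}(t^*) \le \phi_i^{\bA}(s)$ for every assignment $s$; the atomic expressions $(=)_0^\infty$ and $(\emptyset)_0^\infty$ are handled directly ($(=)_0^\infty$ evaluates to $0$ on any constant tuple, and $(\emptyset)_0^\infty$ is identically $\infty$, so it contributes nothing to a satisfiable instance). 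Summing over $i$ gives $\phi^{\bA}(t^*) \le \phi^{\bA}(s)$ for all $s \in A^n$, so $t^*$ is an optimal solution and $\phi^{\bA}(t^*)$ equals the cost of $\phi$. Hence $(\phi,u)$ is a positive instance if and only if $\phi$ is satisfiable and $\phi^{\bA}(t^*) \le u$.

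It remains to check this condition in polynomial time. Since $\bA$ has finite signature, each $R^{\bA}$ attains only finitely many values (this is part of the data given with $\bA$, via its first-order definition), so we can precompute $R^{\bA}(d,\dots,d)$ for each $R \in \tau$ and whether $R^{\bA}$ is identically $\infty$; these are constants depending only on $\bA$, not on the input. Given $\phi$, we evaluate $\phi^{\bA}(t^*)$ by summing the at most $m$ precomputed atomic values — this is $\infty$ exactly when some summand is $(\emptyset)_0^\infty$ or is an $R$ with $R^{\bA}$ identically $\infty$ (equivalently $\phi$ is unsatisfiable), and otherwise it is a rational number of size polynomial in the input, which we compare against $u$. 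The whole procedure runs in polynomial time, so $\vcsp(\bA)$ is in P. I do not anticipate a serious obstacle here; the only point needing mild care is the bookkeeping that the constant assignment is simultaneously optimal for \emph{every} summand, which is exactly where the inequality $R(c(t)) \le R(t)$ (for all $t$, with the same constant $d$) is used — the uniformity of $d$ across all relations and all coordinates is what makes the argument work.
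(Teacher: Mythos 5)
Your proposal is correct and follows essentially the same route as the paper: the inequality $R(b,\dots,b)=R(c(t))\le R(t)$ for every relation and every tuple shows that the constant assignment simultaneously minimises every atomic summand, hence the whole expression, and the algorithm just evaluates the instance at that assignment and compares with the threshold. The extra bookkeeping you supply (handling $(=)_0^\infty$ and $(\emptyset)_0^\infty$, precomputing the finitely many constants $R^{\bA}(b,\dots,b)$) only fills in details the paper leaves implicit.
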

\begin{proof}
Suppose that there exists $b \in A$ such that the unary operation $c$ defined by $c(a) = b$ for all $a \in A$ is a~fractional polymorphism of $\bA$.
Then for every $R \in \tau$ of arity $k$ and $t \in A^k$, we have $R(b, \dots, b)=R(c(t)) \leq R(t)$.
Let $\phi(x_1,\dots,x_n) = \sum_i \phi_i$ be an instance of $\vcsp(\bA)$, where each $\phi_i$ is an atomic $\tau$-expression. 
Then the minimum 
of $\phi$ equals $\sum_i \phi_i(b,\dots,b)$ and hence
$\vcsp(\checked{\bA})$ is in P. 
\end{proof} 

The following proposition relates pp-constructability in a~valued structure $\bA$ with pp-constructability in the relational structure $(\Q; \langle \bA \rangle_0^\infty)$.

\begin{proposition}\label{prop:pp-constr-rel}
Let $\bA$ be a~valued structure and let $\bB$ be a~relational $\tau$-structure on countable domains $A$ and $B$, respectively. Then $\bA$ pp-constructs $\bB$ if and only if $(A; \langle \bA \rangle_0^\infty)$ pp-constructs $\bB$.
\end{proposition}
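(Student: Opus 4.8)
The plan is to prove the two implications separately; the backward implication is essentially bookkeeping, and the forward implication carries the content.

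\textbf{Backward direction.} Suppose $(A;\langle \bA \rangle_0^\infty)$ pp-constructs $\bB$. I would first observe that $\langle \bA \rangle_0^\infty \subseteq \langle \bA \rangle$ and that $\langle \bA \rangle$ is a valued relational clone, hence $\langle (A;\langle \bA \rangle_0^\infty) \rangle \subseteq \langle \bA \rangle$. Therefore every $d$-th pp-power of $(A;\langle \bA \rangle_0^\infty)$ is already a $d$-th pp-power of $\bA$: the two structures have the same domain $A$, and the arity-$kd$ relations defining the pp-power lie in $\langle (A;\langle \bA \rangle_0^\infty) \rangle \subseteq \langle \bA \rangle$. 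Since $\bB$ is fractionally homomorphically equivalent to such a pp-power, it is fractionally homomorphically equivalent to a pp-power of $\bA$, i.e. $\bA$ pp-constructs $\bB$.

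\textbf{Forward direction.} Suppose $\bA$ pp-constructs $\bB$, witnessed by a $d$-th pp-power $\bB'$ of $\bA$ together with fractional homomorphisms $\omega$ from $\bB$ to $\bB'$ and $\omega'$ from $\bB'$ to $\bB$; for each $R \in \tau$ of arity $k$ let $S_R \in \langle \bA \rangle$ of arity $kd$ witness that $\bB'$ is a pp-power. I would set $\bB'' := \Feas(\bB')$, that is, $R^{\bB''} := \Feas(R^{\bB'})$. Because valued relational clones are closed under $\Feas$, one checks that $R^{\bB''}$ is given componentwise by the crisp relation $\Feas(S_R) \in \langle \bA \rangle_0^\infty$, so $\bB''$ is a $d$-th pp-power of $(A;\langle \bA \rangle_0^\infty)$. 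It then remains to show that $\bB''$ is fractionally homomorphically equivalent to $\bB$, and the claim is that $\omega$ and $\omega'$ themselves serve as the required fractional homomorphisms (note $\bB'$ and $\bB''$ share the domain $A^d$). For $\omega$ as a fractional homomorphism from $\bB$ to $\bB''$: given $R\in\tau$ of arity $k$ and $t\in B^k$, there is a constraint only when $R^{\bB}(t)=0$; then $E_\omega[f\mapsto R^{\bB'}(f(t))]$ exists and is $\le 0$, which forces $\int X^+\, d\omega < \infty$ for $X(f)=R^{\bB'}(f(t))$, hence $\omega(\{f : R^{\bB'}(f(t))=\infty\})=0$; since $R^{\bB''}(f(t))=\infty$ exactly when $R^{\bB'}(f(t))=\infty$, we get $E_\omega[f\mapsto R^{\bB''}(f(t))]=0=R^{\bB}(t)$. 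For $\omega'$ as a fractional homomorphism from $\bB''$ to $\bB$: given $R\in\tau$ of arity $k$ and $t\in (A^d)^k$, if $R^{\bB''}(t)=\infty$ there is nothing to check, and if $R^{\bB''}(t)=0$ then $R^{\bB'}(t)<\infty$, so $E_{\omega'}[g\mapsto R^{\bB}(g(t))]\le R^{\bB'}(t)<\infty$; but $R^{\bB}(g(t))\in\{0,\infty\}$, so this expectation is $0$ or $\infty$, hence equals $0=R^{\bB''}(t)$. This yields fractional homomorphisms in both directions between $\bB''$ and $\bB$, so $(A;\langle \bA \rangle_0^\infty)$ pp-constructs $\bB$.

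\textbf{Main obstacle.} The only delicate point is the passage from the valued pp-power $\bB'$ to its feasibility reduct $\bB'' = \Feas(\bB')$ while retaining both fractional homomorphisms. Concretely, one has to argue that an expectation which \emph{exists and is $\le 0$} cannot put positive mass on the value $\infty$, and that an expectation of a $\{0,\infty\}$-valued random variable which is bounded above by a finite number must equal $0$; both use in an essential way that $\bB$ is crisp. Everything else (closure of valued relational clones under $\Feas$, the inclusion $\langle \bA \rangle_0^\infty \subseteq \langle \bA \rangle$, and the fact that pp-powers interact transparently with fractional homomorphic equivalence) is routine.
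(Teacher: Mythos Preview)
Your proposal is correct and follows essentially the same route as the paper: pass from the pp-power $\bB'$ of $\bA$ to $\Feas(\bB')$, observe that this is a pp-power of $(A;\langle \bA \rangle_0^\infty)$, and verify that the \emph{same} fractional homomorphisms witness the equivalence with $\bB$. The paper carries out only one of the two directions explicitly and says the other is similar; you spell out both and add a bit more care on the measure-theoretic point (why a finite expectation of a $\{0,\infty\}$-valued variable must vanish), but there is no substantive difference.
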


\begin{proof}
Clearly, if $(A; \langle \bA \rangle_0^\infty)$ pp-constructs $\bB$, then $\bA$ pp-constructs $\bB$. Suppose that $\bA$ pp-constructs $\bB$. Then there exists $d\in \N$ and a~pp-power $\bC$ on the domain $C=A^d$ of $\bA$ which is fractionally homomorphically equivalent to $\bB$. We claim that $\Feas(\bC)$ is fractionally homomorphically equivalent to $\bB$ as well, witnessed by the same fractional homomorphisms.

Let $\omega_1$ be a~fractional homomorphism from $\bC$ to $\bB$ and $\omega_2$ be a~fractional homomorphism from $\bB$ to $\bC$. Let $R \in \tau$ be of arity $k$ and $t \in C^k$. By the definition of a~fractional homomorphism,
\[E_{\omega_1}[f \mapsto R^{\bB}(f(t))] \leq R^{\bC}(t).\]
We claim that 
\begin{equation}
E_{\omega_1}[f \mapsto R^{\bB}(f(t))] \leq \Feas(R^{\bC})(t). \label{eq:pp-constr}
\end{equation}
This is clear if $\Feas(R^{\bC})(t)=\infty$. Otherwise, $\Feas(R^{\bC})(t)=0$, and therefore $R^{\bC}(t)$ is finite. Hence, 
\[E_{\omega_1}[f \mapsto R^{\bB}(f(t))] = \sum_{s \in B^k} \omega_1(\mathscr{S}_{t,s}) R^{\bB}(s)\]
is finite. 
Since $R^{\bB}$ attains only values $0$ and $\infty$, it follows that $E_{\omega_1}[f \mapsto R^{\bB}(f(t))]=0$. Therefore \eqref{eq:pp-constr} holds. Since $R$ and $t$ were arbitrary, it follows that $\omega_1$ is a~fractional homomorphism from $\Feas(\bC)$ to $\bB$. The proof that $\omega_2$ is a~fractional homomorphism from $\bB$ to $\Feas(\bC)$ is similar.

Note that $\Feas(\bC)$ is a~pp-power of $(A; \langle \bA \rangle_0^\infty)$: every relation $R$ of $\bC$ of arity $k$ lies in $\langle \bA \rangle$ when viewed as a~relation of arity $dk$ and therefore $\Feas(R) \in \langle \bA \rangle_0^\infty$. 
Hence, $(A; \langle \bA \rangle_0^\infty)$ pp-constructs $\bB$ as we wanted to prove.
\end{proof}

Let $A$ be a~set, $k\in \N$, and $i \in [k]$.
It is easy to see that the $k$-ary $i$-th projection $\pi_i^k \in \Pol(\bA)$ for every $k \in \N$, $i \in [k]$, and every relational structure $\bA$ on the domain $A$. 

\begin{lemma} \label{lem:ess-crisp}
Let $\bA$ be a~valued structure. Then $\fPol(\bA)$ contains $\pi^2_1$ if and only if 
$\bA$ is essentially crisp. 
\end{lemma}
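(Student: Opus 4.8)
The plan is to unwind Definition~\ref{def:pres} for the fractional operation concentrated on $\pi^2_1$ and observe that the improvement condition collapses to a trivial inequality.

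First I would fix an arbitrary valued relation $R$ of $\bA$, say of arity $k$. Since the fractional operation in question is the Dirac measure supported on $\pi^2_1$, for any $t^1,t^2 \in A^k$ the expected value $E_\omega[f \mapsto R(f(t^1,t^2))]$ always exists and equals $R(\pi^2_1(t^1,t^2)) = R(t^1)$. Hence $\pi^2_1$ improves $R$ if and only if $R(t^1) \le \tfrac{1}{2}\bigl(R(t^1)+R(t^2)\bigr)$ for all $t^1,t^2 \in A^k$. This reformulation is the heart of the argument; everything else is bookkeeping with the conventions on $\infty$.

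For the forward direction, assume $\pi^2_1 \in \fPol(\bA)$. Given tuples $s,t \in A^k$ with $R(s),R(t) \in \mathbb{Q}$, applying the inequality with $(t^1,t^2)=(s,t)$ gives $R(s) \le R(t)$, and applying it with $(t^1,t^2)=(t,s)$ gives $R(t) \le R(s)$; so $R(s)=R(t)$. Thus $R$ takes at most one finite value, and since $R$ was an arbitrary valued relation of $\bA$, the structure $\bA$ is essentially crisp. For the converse, assume $\bA$ is essentially crisp and let $R$ take values in $\{a,\infty\}$ with $a \in \mathbb{Q}$; I would then check the inequality $R(t^1) \le \tfrac{1}{2}\bigl(R(t^1)+R(t^2)\bigr)$ by cases on $R(t^1)\in\{a,\infty\}$, using that $\tfrac{1}{2}(\infty + x)=\infty$ for every $x \in \mathbb{Q}\cup\{\infty\}$. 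In every case the inequality holds, so $\pi^2_1$ improves $R$, and hence $\pi^2_1 \in \fPol(\bA)$.

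I do not expect a genuine obstacle: the only points requiring care are that the relevant expected value is automatically well defined because $\omega$ is a point mass, and that the arithmetic with $\infty$ (as stipulated in the preliminaries) is applied correctly in the case analysis.
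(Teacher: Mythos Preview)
Your proposal is correct and follows essentially the same approach as the paper's proof: both reduce the improvement condition for the point mass at $\pi^2_1$ to the inequality $R(t^1)\le \tfrac12(R(t^1)+R(t^2))$ and then argue by elementary case analysis. The only cosmetic difference is that the paper proves the forward direction by contraposition (exhibiting a single violating pair $(t,s)$ with $R(t)>R(s)$), whereas you argue directly by applying the inequality in both orders to force equality of any two finite values; the converse case analyses are identical.
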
 

\begin{proof}
Suppose that $\bA$ contains a~valued relation $R$ of arity $k$ which takes two finite values $a$ and $b$ with $a<b$. 
Let $s \in A^k$ be such that $R(s) = a$ and $t \in A^k$ be such that $R(t) = b$. Then $R(\pi^2_1(t,s)) > \frac{R(s)+R(t)}{2}$, and hence $\pi^2_1 \notin   \fPol(\bA)$. 

Conversely, suppose that $\bA$ is essentially crisp. Let $R$ be a~valued relation of arity $k$ in $\bA$ and let 
$s,t \in A^k$. If $R(s) = \infty$ or $R(t) = \infty$
then $R(\pi^2_1(s,t)) \leq \frac{R(s)+R(t)}{2} = \infty$. Otherwise, $R(s) = R(t)$ and 
the inequality holds trivially. 
\end{proof}

\section{Equality VCSPs}
\label{sect:evcsps}
An \emph{equality structure} is a relational structure whose automorphism group is the group of all permutations of its domain~\cite{ecsps}; we define an \emph{equality valued structure} analogously. In this section, we prove that 
for every equality valued structure $\bA$, 
$\VCSP(\bA)$ is in P or NP-complete. This generalises the P versus NP-complete dichotomy for equality min-CSPs from \cite{OsipovW23}.

If the domain of $\bA$ is finite, then this is already known (see the discussion in the introduction). It is easy to see that classifying the general infinite case reduces to the countably infinite case. For notationally convenient use in the later sections, we work with the domain ${\mathbb Q}$, but we could have used any other countably infinite set instead.
We will need the following relation:
\begin{align} 
\label{eq:D}
\Dis & := \{(x,y,z) \in \Q^3 \mid (x=y \neq z) \vee (x \neq y = z) \}.
\end{align}
It is known that $(\Q;\Dis)$ pp-constructs $K_3$, see, e.g., Theorem 7.4.1 and Corollary 6.1.23 in \cite{Book}. 
Let $\const \colon {\mathbb Q} \to {\mathbb Q}$ be the constant zero operation, given by $\const(x) := 0$ for all $x \in {\mathbb Q}$. Let $\inj \colon \Q^2 \to \Q$ be injective.
A~tuple is called \emph{injective} if it has pairwise distinct entries.

\begin{theorem}[\hspace{1sp}\cite{Book, ecsps}]\label{thm:crisp-hard} 
If $\bA$ is a~relational structure such that $\Aut(\bA) = \Sym(\mathbb Q)$, then exactly one of the following cases applies. 
\begin{itemize}
    \item $\const \in \Pol(\bA)$ or $\inj \in \Pol(\bA)$.
    In this case, for every reduct $\bA'$ of $\bA$ with a~finite signature, $\CSP(\bA')$ is in P.
    \item The relation $\Dis$
    has a~primitive positive definition in $\bA$.
    In this case, $\bA$ pp-constructs $K_3$ and $\bA$ has a~reduct $\bA'$ with a~finite signature such that $\CSP(\bA')$ is NP-complete.
    \end{itemize}
\end{theorem}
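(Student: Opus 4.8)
The plan is to derive \cref{thm:crisp-hard} from the classification of equality constraint languages \cite{ecsps} (see also \cite{Book}), translated into the fractional-polymorphism and pp-construction terminology used here. Since $\bA$ is crisp, by Remark~\ref{rem:crisp-fpol} a single operation $f$ on $\Q$ is a fractional polymorphism of $\bA$ if and only if $f \in \Pol(\bA)$; in particular $\const \in \fPol(\bA)$ iff $\const \in \Pol(\bA)$, and $\inj \in \fPol(\bA)$ iff $\inj \in \Pol(\bA)$. First I would record that the two cases are mutually exclusive: $\Dis$ contains no constant tuple, so it is not preserved by $\const$; and if $f$ is any binary injective operation then, for $a,b,c$ pairwise distinct, $f$ maps the tuples $(a,a,b),(c,b,b) \in \Dis$ to an injective triple, which is not in $\Dis$, so $\Dis$ is not preserved by any binary injective operation either. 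Hence if $\Dis$ has a primitive positive definition in $\bA$, then $\Pol(\bA) \subseteq \Pol(\Q;\Dis)$ and neither $\const$ nor $\inj$ is a (fractional) polymorphism of $\bA$.

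Next I would establish the trichotomy: either $\const \in \Pol(\bA)$, or $\inj \in \Pol(\bA)$, or $\Dis$ is pp-definable in $\bA$. If $\const \notin \Pol(\bA)$, then since $\Aut(\bA)=\Sym(\Q)$ acts transitively on $\Q$ there is a nonempty relation $R$ of $\bA$ that contains no constant tuple; taking an orbit of tuples in $R$ with the least number of blocks and substituting a single variable into each but two of these blocks (and projecting out the rest) pp-defines $\{(x,y)\mid x\neq y\}$ from $R$. If moreover $\inj \notin \Pol(\bA)$, then some relation $R'$ of $\bA$ is not preserved by any binary injective operation; analysing the partitions of the coordinate set realised by the two tuples of $R'$ witnessing this, one locates three coordinates exhibiting the pattern $x=y \vee y=z$, so that $\{(x,y,z)\mid x=y \vee y=z\}$ is pp-definable in $\bA$. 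Conjoining this relation with $x \neq z$ yields a primitive positive definition of $\Dis$. I expect this combinatorial step — the orbit surgery, and in particular pinning down exactly the pattern $\Dis$ from an arbitrary relation not preserved by a binary injective operation — to be the main obstacle; it is precisely the content of \cite{ecsps} and may be imported.

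For the tractability half: if $\const \in \Pol(\bA)$, say $\const$ has value $b$, then every nonempty relation of every finite-signature reduct $\bA'$ contains the tuple $(b,\dots,b)$, so an instance of $\CSP(\bA')$ is unsatisfiable only if it contains an atom with the empty relation, which can be checked in polynomial time, so $\CSP(\bA')$ is in P. If $\inj \in \Pol(\bA)$, then every relation of a finite-signature reduct $\bA'$ is \emph{equality Horn}, i.e.\ definable by a conjunction of disjunctions of literals $x=y$ and $x\neq y$ with at most one positive literal per disjunct; $\CSP(\bA')$ is then solved in polynomial time by positive-unit-propagation: maintain the equivalence relation on variables generated by the equality atoms and by those disjunctions all of whose negative literals are already forced, and accept iff no disjunction becomes violated, in which case assigning distinct equivalence classes distinct rationals gives a solution.

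Finally, for the hardness half: if $\Dis$ is pp-definable in $\bA$, then $\bA$ pp-constructs $(\Q;\Dis)$, which pp-constructs $K_3$ by the fact quoted above; by transitivity of pp-constructability $\bA$ pp-constructs $K_3$, so by Lemma~\ref{lem:hard} there is a finite-signature reduct $\bA'$ of $\bA$ with $\vcsp(\bA')$ NP-hard. Since $\bA'$ is crisp, $\vcsp(\bA') = \CSP(\bA')$ by Remark~\ref{rem:crisp-expr}, and $\CSP(\bA')$ is in NP by Theorem~\ref{thm:NP} (as $\Aut(\Q;<) \subseteq \Sym(\Q) = \Aut(\bA) \subseteq \Aut(\bA')$). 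Hence $\CSP(\bA')$ is NP-complete, which together with the tractability and mutual-exclusivity parts completes the proof.
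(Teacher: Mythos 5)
Your proposal is correct in substance and, in effect, takes the same route as the paper: the paper does not prove Theorem~\ref{thm:crisp-hard} at all but imports it wholesale from \cite{ecsps} and \cite{Book}, and the one genuinely hard step in your outline --- showing that failure of both $\const$ and $\inj$ forces a primitive positive definition of $\Dis$ --- is exactly the step you yourself defer to \cite{ecsps}. The parts you do argue in full are fine: the reduction of $\fPol$ to $\Pol$ for crisp structures via Remark~\ref{rem:crisp-fpol}; the mutual exclusivity (your two witnessing tuples $(a,a,b),(c,b,b)\in\Dis$ mapping under any binary injection to an injective triple are correct); the constant-case and Horn/unit-propagation algorithms for the tractable cases; and the hardness chain $\Dis \in \langle\bA\rangle$ $\Rightarrow$ $\bA$ pp-constructs $K_3$ (using the quoted fact about $(\Q;\Dis)$ and transitivity) $\Rightarrow$ NP-hardness of a finite reduct via Lemma~\ref{lem:hard}, with NP-membership from Theorem~\ref{thm:NP} and the identification $\VCSP=\CSP$ for crisp structures. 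One caution: your sketch of the deferred step (that from any relation not preserved by a binary injection, together with $\neq$, one ``locates three coordinates exhibiting the pattern $x=y \vee y=z$'') oversimplifies the actual argument in \cite{ecsps}, which requires a more careful induction on the witnessing tuples; as written it should not be presented as a proof, only as a pointer to the imported result --- which is how the paper itself treats the theorem.
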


The complexity classification of equality VCSPs is a consequence of several lemmas. The statements are inspired by analogous statements for equality CSPs that link the pp-definability of certain relations with the non-existence of certain polymorphisms~\cite{ecsps}. Our proofs, however, 
are quite remarkable, because we do not know yet whether a general preservation theorem holds that would link the pp-\checked{expressibility} of valued relations with the non-existence of \checked{certain} fractional polymorphisms; 
see the discussion in Remark~\ref{rem:oligo}.
\checked{Our first lemma shows a~useful consequence of $\const \notin \fPol(\bA)$.}

\begin{lemma}\label{lem:no-const-temp}
Let $\bA$ be a~valued structure such that $\Aut(\Q; <)\subseteq \Aut(\bA)$.
If $\const \notin \fPol(\bA)$, 
then 
$(\neq)_0^\infty \in \langle \bA \rangle$ or $(<)_0^\infty \in \langle \bA \rangle$. In particular, if $\Aut(\bA)=\Sym(\Q)$, then $(\neq)_0^\infty \in \langle \bA \rangle$. 
\end{lemma}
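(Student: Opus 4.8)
The plan is to turn the failure of $\const$ to be a fractional polymorphism into a concrete crisp relation in $\langle \bA \rangle$ that omits all constant tuples, and then to pp-define $(\neq)_0^\infty$ or $(<)_0^\infty$ from that relation by a single, explicit formula.

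First I would unwind the hypothesis. Viewing $\const$ as the fractional operation concentrated on the constant-$0$ unary operation, and noting that for $\ell = 1$ the right-hand side of~\eqref{eq:fpol} is simply $R(t)$, the operation $\const$ fails to improve some valued relation of $\bA$: there are a valued relation $R$ of $\bA$, of some arity $k$, and a tuple $t \in \Q^k$ with $R(0,\dots,0) > R(t)$. Then $R(t) \in \Q$, and since $\Aut(\bA) \supseteq \Aut(\Q;<)$ is oligomorphic, $R$ attains only finitely many values and hence has a minimum value $v \in \Q$ with $v \le R(t) < R(0,\dots,0)$. Put $S := \Opt(R)$; since valued relational clones are closed under $\Opt$, the crisp relation $S$ lies in $\langle \bA \rangle_0^\infty$. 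It is non-empty, preserved by $\Aut(\Q;<)$ (being $\Opt$ of a relation with that property), and contains no constant tuple, because $(0,\dots,0) \notin S$ and $\Aut(\Q;<)$ acts transitively on the constant tuples.

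The core is then purely combinatorial: I claim that whenever $S \subseteq \Q^k$ is preserved by $\Aut(\Q;<)$, non-empty, and contains no constant tuple, then $(<)_0^\infty$ or $(\neq)_0^\infty$ is pp-definable from $S$ together with equality, hence — by Remark~\ref{rem:pp-def} — lies in $\langle \bA \rangle$. To see this, let $m$ be the least number of pairwise distinct entries occurring in a tuple of $S$; since $S$ is non-empty with no constant tuple, $2 \le m \le k$. Fix $t^* \in S$ with exactly $m$ distinct entries and pick coordinates $p,q \in [k]$ with $t^*_p < t^*_q$ (possible since $t^*$ is non-constant). Consider
\[
T(u,v)\ :=\ \exists (x_i)_{i\in[k]}\ \Big(\, S(x_1,\dots,x_k)\ \wedge\ x_p = u\ \wedge\ x_q = v\ \wedge \bigwedge_{i,j\,:\ t^*_i = t^*_j} x_i = x_j \,\Big),
\]
which is pp-definable from $S$ and equality, hence preserved by $\Aut(\Q;<)$. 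If $u < v$, then by density and unboundedness of $(\Q;<)$ there is an order-embedding of the finite set $\{t^*_1,\dots,t^*_k\}$ into $\Q$ sending $t^*_p$ to $u$ and $t^*_q$ to $v$; applying it coordinatewise to $t^*$ yields a tuple with the same order type as $t^*$ (so it lies in $S$) that is constant on the $t^*$-equality classes and has $u$ at position $p$ and $v$ at position $q$. Hence $T$ contains every pair $(u,v)$ with $u<v$. On the other hand $T$ contains no diagonal pair $(c,c)$: any witness would be constant on the $t^*$-equality classes while taking the same value on the two distinct classes containing $p$ and $q$, so it would have at most $m-1$ distinct entries, contradicting minimality of $m$. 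A binary relation preserved by $\Aut(\Q;<)$ that contains the whole orbit $\{(u,v) : u<v\}$ but is disjoint from the diagonal is exactly $(<)$ or $(\neq)$ (viewed as subsets of $\Q^2$), which proves the claim.

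Finally, for the special case: if $\Aut(\bA) = \Sym(\Q)$, then every relation of $\langle \bA \rangle$ is preserved by $\Sym(\Q)$, while $(<)_0^\infty$ is not, so the main statement forces $(\neq)_0^\infty \in \langle \bA \rangle$. The step I expect to be the crux is the combinatorial claim, and within it the verification that $T$ avoids the diagonal: this is precisely where it is essential to choose $t^*$ with the \emph{minimal} number of distinct entries and to collapse its equality classes inside $T$ — without that precaution $T$ could come out equal to $(\le)$, from which neither $(<)$ nor $(\neq)$ is pp-definable.
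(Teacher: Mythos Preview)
Your proof is correct, but it proceeds along a genuinely different route from the paper's. The paper keeps working with the valued relation $R$ itself: it sets $\psi_i(x_1,\dots,x_i) := R(x_1,\dots,x_i,x_i,\dots,x_i)$, picks the minimal index $i$ for which some $\psi_i(t_1,\dots,t_i) < R(0,\dots,0)$ (and among those a value-minimal witness), projects to the binary valued relation $\psi(x_{i-1},x_i) := \min_{x_1,\dots,x_{i-2}} \psi_i(x_1,\dots,x_i)$, argues $\psi(a,a) > \psi(t_{i-1},t_i)$ from the minimality of $i$, and only then applies $\Opt$ to read off $(\neq)_0^\infty$ or $(<)_0^\infty$ via a three-case analysis. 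By contrast, you apply $\Opt$ immediately to pass to a crisp relation $S$ with no constant tuple, and then prove a self-contained combinatorial fact: any non-empty $\Aut(\Q;<)$-invariant $S \subseteq \Q^k$ without constant tuples pp-defines $(<)$ or $(\neq)$, via a tuple $t^* \in S$ with the minimal number of distinct entries and equality constraints collapsing all of its equality classes. Your decomposition is cleaner in that it isolates a reusable crisp statement and uses a more symmetric minimality (number of distinct values rather than a trailing-variable index); the paper's argument stays inside the valued setting throughout and uses a more ad hoc variable identification pattern, but avoids introducing the auxiliary relation $T$. Both arrive at the same endpoint by the same underlying mechanism---a minimality choice that forces the projected binary relation to miss the diagonal.
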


\begin{proof}
By assumption, there exists $R \in \tau$ of arity $k$ and $t \in A^k$ 
such that $m := R(t) < R(0,\dots,0)$. 
For $i \in \{1,\dots,k\}$, define $\psi_i(x_1, \dots, x_i)$ to be $R(x_1,\dots,x_i,x_i,\dots,x_i)$. 
Choose $t$ and $i$ such that $i$ is minimal with the property that $\psi_i(t_1,\dots,t_i) < R(0,\dots,0)$. 
Note that such an $i$ exists, because for $i = k$ we have 
$\psi_i(t_1,\dots,t_i) = R(t_1,\dots,t_k) < R(0,\dots,0)$. Moreover, $i>1$, since for every $a \in A$ there exists $\alpha \in \Aut(\bA)$ such that $\alpha(a)=0$ and hence $R(a, \ldots, a)=R(\alpha(a), \ldots, \alpha(a))=R(0, \ldots, 0)$. 
Also note that $t_{i-1} \neq t_i$, by the minimality of $i$. 

From all the such pairs $(t,i)$ that minimise $i$, choose a~pair 
$(t,i)$ where $\psi_i(t_1,\dots,t_i)$ is minimal. Such a~$t$ exists because $R$ attains only finitely many values. 
Define \[\psi(x_{i-1},x_i) := \min_{x_1,\dots,x_{i-2}} \psi_i(x_1,\dots, x_{i-2}, x_{i-1}, x_i).\]
Let $a\in A$ and note that, by~\cref{obs:order-type}, value $\psi(a,a)$ does not depend on the choice of~$a$. By our choice of~$i$, $\psi(a,a) > \psi(t_{i-1},t_i)$; otherwise, there are $a_1,\dots,a_{i-2} \in A$ such that 
$\psi_i(a_1,\dots,a_{i-2},a,a) \leq \psi_i(t_1, \dots, t_i)$, in contradiction to the choice of $(t,i)$ such that $i$ is minimal. 
We distinguish three cases (recall that $t_{i-1} \neq t_i$):
\begin{enumerate}
\item $\psi(t_{i-1}, t_i)=\psi(t_i,t_{i-1})$,
\item $\psi(t_{i-1}, t_i)<\psi(t_i,t_{i-1})$ and $t_{i-1}<t_i$,
\item $\psi(t_{i-1}, t_i)<\psi(t_i,t_{i-1})$ and $t_i < t_{i-1}$.
\end{enumerate}
Note that for $a,b \in A$ such that $a<b$, the values $\psi(a,b)$ and $\psi(b,a)$ do not depend on the choice of $a$, $b$.
In case 1, $\Opt(\psi)$ \checked{is equal to} $(\neq)_0^\infty$. In case 2, $\Opt(\psi)$ \checked{equals} $(<)_0^\infty$. Finally, in case 3 $\Opt(\psi)$ \checked{is equal to} $(>)_0^\infty$, \checked{from which we obtain} $(<)_0^\infty$ by exchanging the input variables.

The last statement follows from the fact that $\Sym(\Q)$ does not preserve $(<)_0^\infty$. 
\end{proof}

\checked{As a next step, we prove tractability for equality valued structures with an injective fractional polymorphism. The high level idea is that to find a solution to an instance, we collapse variables whose equality is forced and find an injective solution for the remaining variables.}

\begin{lemma}\label{lem:r}
Let $\bA$ be a~valued structure such that $\Aut(\bA) = \Sym(\Q)$.
Suppose that $\fPol(\bA)$ contains $\inj$. Then $\vcsp(\bA)$ is in P. 
\end{lemma}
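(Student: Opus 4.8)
The plan is to exhibit a polynomial-time algorithm for $\VCSP(\bA)$. First I would pass to the crisp world: since $\Feas(R^{\bA}) \in \langle\bA\rangle$ for every relation symbol $R$ of $\bA$, Lemma~\ref{lem:easy-Imp-fPol} together with the hypothesis $\inj \in \fPol(\bA)$ gives $\inj \in \fPol(\Feas(\bA))$, hence $\inj \in \Pol(\Feas(\bA))$ by Remark~\ref{rem:crisp-fpol}. As $\inj$ is injective it also preserves the crisp disequality relation $(\neq)^\infty_0$, so the relational structure $\bB := (\Q;\Feas(\bA),(\neq)^\infty_0)$ has automorphism group $\Sym(\Q)$ and admits $\inj$ as a (fractional) polymorphism; by Theorem~\ref{thm:crisp-hard}, $\CSP(\bB)$ --- and a fortiori $\CSP(\Feas(\bA))$ --- is solvable in polynomial time. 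These two CSP algorithms will be used as black boxes; note that $(\neq)^\infty_0$ need not lie in $\langle\bA\rangle$, so $\bB$ is only an auxiliary problem whose tractability genuinely relies on Theorem~\ref{thm:crisp-hard}.

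Next I would describe the algorithm. Given an instance $(\phi,u)$ with $\phi = \sum_i \phi_i$ over variables $x_1,\dots,x_n$: first decide whether $\phi$ is satisfiable (equivalently, whether the $\CSP(\Feas(\bA))$-instance obtained by replacing each atomic $\phi_i$ by $\Feas(\phi_i)$ has a solution); if not, then the cost of $\phi$ is $\infty > u$ and we reject. Otherwise, for every pair $p<q$ use the $\CSP(\bB)$-algorithm on $\Feas(\phi) \wedge (x_p \neq x_q)$ to decide whether $\phi$ has a feasible assignment $b$ with $b_p\neq b_q$; let $\sim$ be the relation on $[n]$ with $p \sim q$ iff every feasible assignment of $\phi$ identifies $x_p$ with $x_q$, which is manifestly an equivalence relation. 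Finally pick any concrete $a^\ast\in\Q^n$ with $a^\ast_p = a^\ast_q \iff p \sim q$, compute $\phi^{\bA}(a^\ast) = \sum_i \phi_i^{\bA}(a^\ast)$ by looking up each summand, and accept iff $\phi^{\bA}(a^\ast)\le u$. Only polynomially many subroutine calls and lookups occur, and each lookup is cheap because $\bA$ has a finite signature and each valued relation depends only on the equality type of its argument (Observation~\ref{obs:order-type}); hence the algorithm runs in polynomial time.

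The crux is to show that, when $\phi$ is satisfiable, $\phi^{\bA}(a^\ast)$ equals the cost of $\phi$ --- this is where $\inj\in\fPol(\bA)$ really enters. For feasible assignments $b,b'$, the assignment $\inj(b,b')$ satisfies $\phi^{\bA}(\inj(b,b')) \le \frac12(\phi^{\bA}(b)+\phi^{\bA}(b'))$, so it is again feasible, and its equality pattern is $E_{\inj(b,b')} = E_b \cap E_{b'}$ (in the notation of Definition~\ref{def:E-N-O}) because $\inj$ is injective. Hence the finitely many equality patterns of feasible assignments are closed under intersection and possess a least element $E^\ast$, realised by some feasible assignment; by construction $E^\ast$ is exactly the equivalence relation $\sim$ from the algorithm, so $E_{a^\ast} = E^\ast$ and, $\bA$ being an equality structure, $\phi^{\bA}(a^\ast)<\infty$ and depends only on this pattern. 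Moreover, for any feasible $b$ we have $E^\ast \subseteq E_b$, so $\inj(a^\ast,b)$ has pattern $E^\ast \cap E_b = E^\ast$ and therefore cost $\phi^{\bA}(a^\ast)$; combined with $\phi^{\bA}(\inj(a^\ast,b)) \le \frac12(\phi^{\bA}(a^\ast)+\phi^{\bA}(b))$ and finiteness of $\phi^{\bA}(a^\ast)$ this gives $\phi^{\bA}(a^\ast)\le\phi^{\bA}(b)$, while infeasible assignments cost $\infty$. So $\phi^{\bA}(a^\ast)$ is the minimum, as desired.

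I expect the main difficulties to be bookkeeping rather than conceptual: keeping the interplay of $\Feas$, $\langle\cdot\rangle$ and $\fPol$ straight (in particular that $\inj\in\fPol(\bA)$ propagates to $\Feas(\bA)$ and to the auxiliary structure $\bB$ without needing $(\neq)^\infty_0$ to be expressible in $\bA$), and spelling out carefully that the cost of an instance, and the cost of any fixed assignment, depend only on equality types, so that all evaluations and the CSP reductions are legitimate. The structural claim about $E^\ast$ is short once the closure of feasible equality patterns under $\inj$ has been noticed.
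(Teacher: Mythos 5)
Your proof is correct, but it takes a noticeably different route from the paper's. The paper gives a self-contained algorithm: it detects forced equalities \emph{locally}, summand by summand (a constant-time check per summand since each valued relation depends only on the equality type of its argument), propagates these identifications syntactically, and then shows that any injective assignment to the reduced instance is optimal; the delicate step there is proving feasibility of the injective assignment, which is done by combining orbit representatives of the finite-value tuples of each summand through repeated applications of $\inj$. You instead compute the \emph{globally} forced equalities semantically, by making $O(n^2)$ calls to a polynomial-time algorithm for $\CSP(\Q;\Feas(\bA),(\neq)_0^\infty)$, whose tractability you correctly import from Theorem~\ref{thm:crisp-hard} (and your bookkeeping with Lemma~\ref{lem:easy-Imp-fPol} and Remark~\ref{rem:crisp-fpol}, plus the observation that the auxiliary $\neq$ need not be expressible in $\bA$, is sound); optimality then follows from the neat observation that the equality patterns of feasible assignments are closed under intersection via $\inj$, so the least pattern $E^\ast$ coincides with your relation $\sim$ and the improvement inequality yields minimality of $\phi^{\bA}(a^\ast)$. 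What each approach buys: yours replaces the paper's orbit-combination feasibility argument by the cleaner intersection-closure argument and is closer in spirit to the oracle-based algorithm the paper later uses for $\lex$ (Lemma~\ref{lem:lex-algo}), at the price of relying on the crisp equality-CSP classification as a black box and polynomially many oracle calls; the paper's algorithm is more elementary and purely combinatorial, needing no CSP subroutine at all. Both are valid proofs of the lemma.
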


\begin{proof}
Let $(\phi,u)$ be an instance of $\vcsp(\bA)$ with variable set $V$.
We first check whether $\phi$ contains summands with at most one variable that evaluate to $\infty$ for some (equivalently, for all) assignment; in this case, the minimum of $\phi$ is above every rational threshold and the algorithm rejects. 
Otherwise, we propagate (crisp) forced equalities:
if $\phi$ contains a~summand $R(x_1,\dots,x_k)$ 
and for all $f \colon V \to A$ we have that 
if $R(f(x_1),\dots,f(x_k))$ is finite, then $f(x_i) = f(x_j)$ for some $i<j$, then we say that $x_i = x_j$ is \emph{forced}.
In this case, we replace all occurrences of $x_j$ in $\phi$ by $x_i$ and repeat this process (including the check for unary summands that evaluate to $\infty$); clearly, this procedure must terminate after finitely many steps. 
Let $V'$ be the resulting set of variables, and let $\phi'$ be the resulting instance of $\vcsp(\bA)$. 
Clearly, the minimum for $\phi'$ equals the minimum for $\phi$.
Fix any injective $g \colon V' \to \Q$; we claim that $g$ minimises $\phi'$.
To see this, let 
$f \colon V' \to \Q$ be any assignment and let 
$\psi(x) = \psi(x_1,\dots,x_k)$ be a~ summand 
of $\phi'$. 
We show that $\psi(g(x)) \leq \psi(f(x))$. The statement is trivially true if $k=1$ by the transitivity of $\Aut(\bA)$.
Assume therefore that $k \geq 2$.

We first prove that $\psi(g(x))$ is finite.
Let $u^1,\dots,u^n$ be an enumeration of representatives of all orbits of $k$-tuples 
such that $\psi(u^i) < \infty$ and note that $n\geq 1$, because otherwise the algorithm would have rejected the instance. If for some distinct $p,q \in \{1,\dots,k\}$ 
we have  $(u^i)_p = (u^i)_q$ for all $i \in \{1,\dots,n\}$, then 
the algorithm would have replaced all occurrences of $x_p$ by $x_q$ or vice versa.
So
for all distinct $p,q \in \{1,\dots,k\}$ 
there exists $i \in \{1,\dots,n\}$ such that 
$(u^i)_p \neq (u^i)_q$.
Therefore, 
since $\inj$ is injective, the tuple $\inj(u^1,\inj(u^2,\dots,\inj(u^{n-1},u^n)\dots))$ lies in the same orbit as $g(x)$. Since $\inj \in \fPol(\bA)$, we have
$\psi(g(x)) < \infty$.

Note that
\begin{itemize}
\item 
$2 \psi(\inj(g(x),f(x)) \leq \psi(g(x)) + \psi(f(x))$, because $\inj \in \fPol(\bA)$, and 
\item $\inj(g(x),f(x))$ lies in the same orbit of $\Aut(\bA)$ as $g(x)$, and thus
$$\psi(\inj(g(x),f(x)) = \psi(g(x)).$$
\end{itemize}
Combining, we obtain 
that 
$\psi(g(x)) \leq \psi(f(x))$. 
It follows that $g$ minimises $\phi'$.
Recall that the minimum of $\phi$ and $\phi'$ are equal. Therefore, the algorithm accepts if the evaluation of $\phi'$ under $g$ is at most $u$ and rejects otherwise.
Since checking whether a~summand forces an equality can be done in constant time, and there is a~linear number of variables, the propagation of forced equalities can be done in polynomial time.
It follows that $\vcsp(\bA)$ is in P. 
\end{proof} 

\checked{The last auxiliary statement shows that if $\inj \notin \fPol(\bA)$, then this has strong consequences for expressive power of $\bA$. The proof idea is to use the existence of tuples that certify that $\inj \notin \fPol(\bA)$ to show that we can express some valued relations.}

\begin{lemma}\label{lem:no-r}
Let $\bA$ be a~valued structure such that $\Aut(\bA) = \Sym(\Q)$.
Suppose that $\inj \notin \fPol(\bA)$.
Then $(=)^1_0 \in \langle (\bA,(\neq)_0^\infty) \rangle$ or $\Dis \in \langle (\bA,(\neq)_0^\infty) \rangle$. 
\end{lemma}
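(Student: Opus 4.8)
The plan is to turn the failure of $\inj$ into an expression for one of the two target relations, invoking the crisp classification of \cref{thm:crisp-hard} when a crisp relation is already at hand and extracting $(=)^1_0$ from a genuinely valued witness otherwise. Since $\inj \notin \fPol(\bA)$, there are a valued relation $R$ of $\bA$ of some arity $k$ and tuples $t^1,t^2 \in \Q^k$ with $R(\inj(t^1,t^2)) > \tfrac12(R(t^1)+R(t^2))$; in particular $R(t^1),R(t^2) < \infty$. Substituting variables in $R$ so that the coordinates lying in a common block of the equality type $E_{t^1}\cap E_{t^2}$ become identified, and passing to the resulting relation of $\langle\bA\rangle$, I may assume that $\inj(t^1,t^2)$ is injective, so by \cref{obs:order-type} only three values of $R$ matter: $R(t^1)$, $R(t^2)$, and the value $c$ on injective tuples, with $c > \tfrac12(R(t^1)+R(t^2))$; we may assume $R(t^1)\le R(t^2)$, so $c > R(t^1)$.

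Case 1: some crisp relation $C\in\langle(\bA,(\neq)_0^\infty)\rangle_0^\infty$ is not preserved by $\inj$; this holds in particular when $c=\infty$, with $C=\Feas(R)$. Pick $u^1,u^2\in C$ with $\inj(u^1,u^2)\notin C$. Neither $u^i$ is constant, since $\inj$ applied to a constant tuple and $u^i$ gives a tuple of the same equality type as $u^i$, hence in the $\Sym(\Q)$-invariant set $C$. Moreover two equivalence relations on $[k]$ neither of which identifies all of $[k]$ cannot jointly identify every pair — otherwise the complement of the class-graph of one with at least two classes, a connected spanning graph, would be a subgraph of the class-graph of the other, a disjoint union of cliques, forcing the latter to have a single class. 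Hence some pair $p,q$ is separated by both $u^1$ and $u^2$, and $C'$, obtained from $C$ by adding the constraint $(\neq)_0^\infty(x_p,x_q)$, lies in $\langle(\bA,(\neq)_0^\infty)\rangle_0^\infty$, is non-empty, is still not preserved by $\inj$, and contains no constant tuple; so neither $\const$ nor $\inj$ lies in $\fPol((\Q;C'))$, and \cref{thm:crisp-hard} gives a primitive positive definition of $\Dis$ in $(\Q;C')$, so $\Dis\in\langle(\bA,(\neq)_0^\infty)\rangle$.

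Case 2: $\inj$ preserves every crisp relation of $\langle(\bA,(\neq)_0^\infty)\rangle$. Then it preserves $\Feas(R)$, so $c<\infty$; writing $R[\rho]$ for the value of $R$ on tuples of equality type $\rho$, we thus have a relation $R\in\langle\bA\rangle$ of some arity $k$, with $\inj$-preserved feasibility relation, and an equality type $P$ different from the diagonal with $R[P]<R[\bot]<\infty$, where $\bot$ is the diagonal. I would induct on $k$: for $k=2$, $P$ is the full type $\top$, and a shift followed by a non-negative scaling turns $R$ into $(=)^1_0$; for $k\ge 3$ I would lower the arity by one of two moves — (i) collapsing all coordinates in each block of an equality type $\sigma$ with at least two and fewer than $k$ blocks and with $R[\top]<R[\sigma]<\infty$, which gives a smaller relation of the same shape, or (ii) fixing two coordinates $p,q$, forcing enough disequalities among the remaining coordinates via $(\neq)_0^\infty$, and projecting them out, leaving a binary relation whose two values are $\min\{R[\rho]:p\sim_\rho q\}$ and $\min\{R[\rho]:p\not\sim_\rho q\}$. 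A case analysis on which equality types carry a finite value — first reducing to $P=\top$, and including the extreme case where $R$ is finite only on constant and on injective tuples — shows that in each configuration one of (i), (ii) either already produces a binary relation strictly cheaper on constant pairs, hence $(=)^1_0$ after shifting and scaling, or strictly decreases the arity; if a move produces a crisp relation not preserved by $\inj$, one returns to Case 1 and obtains $\Dis$ instead.

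The routine parts are short (the algebra of $\Feas$, $\Opt$, shifts, scalings, and the graph‑covering observation in Case 1); the real obstacle is organizing the Case 2 induction so that move (i) or move (ii) always applies with a genuine arity drop, in particular handling the equality types on which $R$ takes the value $\infty$. I would also record that, by \cref{lem:soft-eq}, the first disjunct $(=)^1_0\in\langle(\bA,(\neq)_0^\infty)\rangle$ already implies the second, so it is enough to establish either one.
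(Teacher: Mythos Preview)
Your Case~1 is correct but more elaborate than needed: since $(\neq)_0^\infty$ is already part of the expanded structure, $\const$ is automatically excluded from $\Pol(\Q;\Feas(R),(\neq)_0^\infty)$, and \cref{thm:crisp-hard} applies directly without constructing $C'$ or the graph-covering argument.

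Case~2 has a genuine gap, which you yourself flag. Move~(ii) as you describe it is ambiguous (you speak of ``forcing enough disequalities'' but then claim the outcome is $\min\{R[\rho]:p\sim_\rho q\}$ versus $\min\{R[\rho]:p\not\sim_\rho q\}$, which is what one gets with \emph{no} disequalities forced), and in either reading it can fail: the minimum over types with $p\not\sim_\rho q$ may be at least as small as the minimum over types with $p\sim_\rho q$, for every choice of $p,q$. For instance, with $k=3$ and $R[\top]=2$, $R[\{12\}\{3\}]=R[\{1\}\{23\}]=0$, $R[\{13\}\{2\}]=\infty$, $R[\bot]=1$, neither move~(i) (no intermediate $\sigma$ with $R[\top]<R[\sigma]<\infty$) nor the unconstrained version of move~(ii) (both minima equal $0$ for $p=2,q=3$; wrong direction for other pairs) succeeds. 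The case analysis you allude to is the heart of the matter and is not carried out.

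The paper sidesteps the induction by an extremal choice. After reducing, as you do, to $R(\inj(s,t))<\infty$ and (via the substitution $s\mapsto\inj(s,t)$) to pairs $(s,t)$ with $E_s\subseteq E_t$ and $R(t)<R(s)$, it selects among all such pairs one where $t$ has the \emph{maximal} number of distinct coordinate pairs. Picking $i,j$ with $t_i=t_j$ and $s_i\neq s_j$, the expression
\[
\phi(x_i,x_j)=\min_{\text{other }x_p}\Bigl(R(x)+\sum_{(p,q)\in E_s}(=)_0^\infty(x_p,x_q)+\sum_{(p,q)\in N_t}(\neq)_0^\infty(x_p,x_q)\Bigr)
\]
is then strictly cheaper on the diagonal: any off-diagonal witness $u$ of cost $\le R(t)$ would satisfy $E_s\subseteq E_u$, $N_t\subsetneq N_u$, and $R(u)<R(s)$, so $(s,u)$ contradicts the maximality of $t$. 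This yields $(=)_m^\ell$ with $m<\ell\le R(s)<\infty$ in a single step. The missing ingredient in your plan is precisely this extremal selection, which tells you \emph{which} disequalities to force in move~(ii) and guarantees the strict inequality.
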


\begin{proof}
By assumption, there exists $R \in \tau$ with arity $k$ which is not improved by $\inj$, that is, there exist $s,t \in \Q^k$ such that
\begin{equation}\label{eq:f-viol}
R(s)+R(t) < 2 R(\inj(s,t)).
\end{equation}
Note that, in particular, $R(s),R(t) < \infty$.
Suppose first that $R(\inj(s,t))=\infty$. In this case, the inequality above implies that $\Feas(R)$ is not improved by $\inj$.
It follows that $\Pol(\mathbb{Q}; \Feas(R), (\neq)_0^\infty)$ contains neither $\const$ nor $\inj$. 
Hence, by Theorem~\ref{thm:crisp-hard}, $\Dis$ is primitively positively definable in $(\mathbb{Q}; \Feas(R), (\neq)_0^\infty)$ 
and thus $\Dis \in \langle (\bA,(\neq)_0^\infty \rangle$. We may therefore assume in the rest of the proof that $R(\inj(s,t))<\infty$.

Inequality \eqref{eq:f-viol} implies that $R(s) < R(\inj(s,t))$ or $R(t) < R(\inj(s,t))$. Without loss of generality, assume $R(t)<R(\inj(s,t))$. Since $R(\inj(s,t))$ is finite, this implies \[R(\inj(s,t)) + R(t) < 2 R(\inj(s,t)) = 2 R(\inj(\inj(s,t),t)),\]
where the last equality follows from the fact that $\inj(s,t)$ and $\inj(\inj(s,t),t)$ lie in the same orbit of $\Aut(\bA)=\Sym(\mathbb{Q})$. This is an inequality of the same form as \eqref{eq:f-viol} (with $\inj(s,t)$ in the role of $s$), which implies that we can assume without loss of generality that $s$ and $\inj(s,t)$ lie in the same orbit of $\Aut(\bA)$.  
Then \eqref{eq:f-viol} implies that 
$R(t) < R(\inj(s,t))=R(s)$. We show that in this case $(=)^1_0 \in \langle (\bA,(\neq)_0^\infty) \rangle$.

Out of all pairs $(s,t) \in (A^k)^2$ such that $s$ and $\inj(s,t)$ lie in the same orbit and $R(t)<R(s)$, we choose $(s,t)$ such that $t_p\neq t_q$ holds for as many pairs $(p,q)$ as possible. 
Note that $s$ and $t$ cannot lie in the same orbit, and by the injectivity of $\inj$ there exist $i,j \in \{1,\dots,k\}$ such that
$t_{i} = t_{j}$ and $s_i \neq s_j$. 
Note that since $s$ and $\inj(s,t)$ lie in the same orbit, we have $E_s \subseteq E_t$ 
and $N_t \subseteq N_s$ (recall~\cref{def:E-N-O}). 
For the sake of the notation, assume that $i=1$ and $j=2$.
Consider the \checked{pp-}expression  
 \[\phi(x_1,x_2) := \min_{x_3,\dots,x_k} R(x_1,\dots,x_k) + \sum_{(p,q) \in E_s} (=)_0^\infty(x_p,x_q) + \sum_{(p,q) \in N_t} (\neq)_0^\infty(x_p,x_q).\]
 Then $\phi(x,y)$ attains at most two values by the $2$-transitivity of $\Aut(\bA)$. For every $x \in \Q$, we have $\phi(x,x) =: m \leq R(t)$. Let $\ell := \phi(x,y)$ for some distinct $x,y \in \Q$; this value does not depend on the choice of $x$ and $y$ by the $2$-transitivity of $\Aut(\bA)$. Suppose for contradiction that $\ell \leq m$. Then there exists a~tuple $u \in \Q^k$ such that
 \begin{enumerate}[(i)]
     \item $R(u) \leq R(t)$,
     \item $u_i \neq u_j$,
     \item $E_s \subseteq E_u$, and
     \item $N_t \subseteq N_u$.
 \end{enumerate}
By (iii), $\inj(s,u)$ lies in the same orbit as $s$. By (i), we get that $R(u) \leq R(t) < R(s)$. By (ii) and (iv), $u$ satisfies $u_p \neq u_q$ for more pairs $(p,q)$ than $t$, which contradicts our choice of $t$. Therefore, $m < \ell$. It follows that $\phi(x_1, x_2)$ 
is equivalent to $(=)_m^{\ell}$ with $m < \ell$. Recall that $(=)_0^\infty \in \langle \bA \rangle$ by definition and therefore $(=)_m^\ell \in \langle \bA, (\neq)_0^\infty \rangle$.
Note that $\ell \leq R(s) < \infty$. 
Hence, shifting $\phi$ by $-m$ and scaling it by $1/(\ell - m)$ shows that
$(=)^1_0 \in \langle \bA, (\neq)_0^\infty \rangle$, as we wanted to prove.
\end{proof} 

In the proof of the classification theorem for equality VCSPs below we use the following fact.
\begin{lemma}[see, e.g.,~{\cite[Example 4.9]{Resilience-VCSPs}}]\label{lem:soft-eq}
Let $\bA$ be a~valued structure such that $\Aut(\bA) = \Sym(\Q)$. 
If $\langle\bA\rangle$ contains $(=)^1_0$ and $(\neq)_0^{\infty}$, then $\Dis \in \langle \bA \rangle$. In particular, $\bA$ pp-constructs $K_3$, and, if the signature of $\bA$ is finite,
$\vcsp(\bA)$ is NP-complete. 
\end{lemma}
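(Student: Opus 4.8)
The plan is to construct a primitive-positive-like expression for $\Dis$ directly from the two valued relations $(=)^1_0$ and $(\neq)_0^\infty$, using the closure operations available in a valued relational clone (sums of atomic expressions, projection, shifting, non-negative scaling, $\Feas$, and $\Opt$). First I would recall what $\Dis$ looks like: $\Dis(x,y,z)$ holds exactly when exactly one of the two pairs $\{x,y\}$, $\{y,z\}$ is an equality, i.e. $(x=y \neq z)$ or $(x \neq y = z)$. So on a triple $(x,y,z)$ all of whose entries are not all equal and not all distinct --- which is the regime we care about --- the number of equal pairs among $(x,y),(y,z),(x,z)$ is either "$x=y$, others unequal" or "$y=z$, others unequal" or "$x=z$, others unequal"; $\Dis$ picks out the first two and excludes the third.

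The key construction: consider the $\tau$-expression
\[
\phi(x,y,z) := (=)^1_0(x,y) + (=)^1_0(y,z) + (\neq)_0^\infty(x,z).
\]
The summand $(\neq)_0^\infty(x,z)$ forces $x \neq z$ (infinite cost otherwise), and on feasible triples $\phi$ counts how many of the two pairs $(x,y)$, $(y,z)$ are equalities --- this is $0$ if $x,y,z$ pairwise distinct, and $1$ if exactly one of $x=y$ or $y=z$ holds (both cannot hold since then $x=z$). So $\phi$ takes value $1$ precisely on the tuples in $\Dis$, value $0$ on the injective triples, and $\infty$ on triples with $x=z$. Applying $\Opt$ to $\phi$ does not immediately give $\Dis$ because the minimum value $0$ is attained on the injective triples, not on $\Dis$. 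The fix is to first penalise injectivity: since $\Aut(\bA)=\Sym(\Q)$, by Lemma~\ref{lem:no-const-temp} (or because we are given $(\neq)_0^\infty \in \langle\bA\rangle$ and we want to go the other way) I would instead add a term that makes injective triples more expensive than $\Dis$-triples. Concretely, add $(=)^1_0(x,z)$ as well, or subtract an equality bonus; after non-negative scaling and shifting one arranges that the unique minimum-value feasible tuples are exactly those in $\Dis$, and then $\Opt$ of that modified expression is $\Dis$. This shows $\Dis \in \langle\bA\rangle$.

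Once $\Dis \in \langle\bA\rangle$, the relational structure $(\Q;\langle\bA\rangle_0^\infty)$ contains $\Dis$, which pp-constructs $K_3$ by the cited result (Theorem 7.4.1 and Corollary 6.1.23 in \cite{Book}). By Proposition~\ref{prop:pp-constr-rel}, $\bA$ itself pp-constructs $K_3$. If the signature of $\bA$ is finite, Lemma~\ref{lem:hard} gives a finite-signature reduct $\bA'$ with $\vcsp(\bA')$ NP-hard; combined with the membership in NP for structures preserved by $\Aut(\Q;<) \supseteq$ (here $\supseteq$ is not needed --- $\Sym(\Q) \supseteq \Aut(\Q;<)$) from Theorem~\ref{thm:NP}, and noting $\vcsp(\bA')$ reduces to $\vcsp(\bA)$ since all relations of $\bA'$ are in $\langle\bA\rangle$ (Lemma~\ref{lem:expr-reduce}), we get that $\vcsp(\bA)$ is NP-complete.

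The main obstacle I anticipate is the bookkeeping in the first step: getting the right linear combination of the available valued relations so that $\Opt$ of it is \emph{exactly} $\Dis$ and nothing more. One has to be careful that no spurious tuples (e.g. those with $x=y=z$, or the "wrong" injective/equal patterns) sneak into the $\Opt$ set, and that the relation obtained is genuinely $\{0,\infty\}$-valued so it lands in $\langle\bA\rangle_0^\infty$. This is purely a finite case analysis over the (few) order/equality types of triples in $\Q^3$ under $\Sym(\Q)$, so it is routine but needs to be done cleanly; everything after that is an invocation of results already in the excerpt.
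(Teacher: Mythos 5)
Your overall route (build a sum from the two given valued relations and apply $\Opt$, then get $K_3$ and NP-completeness from the results quoted in the paper) is the intended one, but the decisive verification step contains a sign error that derails the write-up. By the paper's convention $S_a^b$ takes value $a$ on $S$, so $(=)_0^1$ costs $0$ on equal pairs and $1$ on unequal pairs. Hence for your expression $\phi(x,y,z) := (=)_0^1(x,y) + (=)_0^1(y,z) + (\neq)_0^\infty(x,z)$ the values are: $\infty$ if $x=z$, value $1$ exactly on $\Dis$ (one of the two soft constraints is violated), and value $2$ on injective triples (both are violated) --- not, as you claim, $0$ on injective triples and $1$ on $\Dis$. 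So $\Opt(\phi)$ is already exactly $\Dis$, it is $\{0,\infty\}$-valued by definition of $\Opt$, and it lies in $\langle\bA\rangle$ since the clone is closed under sums of atomic expressions and $\Opt$. No "fix" is needed; indeed the fix you sketch is both unmotivated (its motivation rests on the swapped values) and partly illegitimate: "subtracting an equality bonus" is not among the closure operations of a valued relational clone (only shifting by a constant, which moves all values uniformly, and non-negative scaling are allowed), so the fallback you describe would not be available if it were actually needed. Incidentally, your first suggested patch (also adding $(=)_0^1(x,z)$) happens to be harmless --- it adds $1$ to every feasible triple, since $x\neq z$ is forced --- but it does nothing, and under your (incorrect) evaluation it would not repair anything either.

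The remainder of your argument is fine: once $\Dis \in \langle\bA\rangle$, the structure $(\Q;\Dis)$ is a first pp-power of $\bA$, so $\bA$ pp-constructs it and, by transitivity and the fact quoted in the paper that $(\Q;\Dis)$ pp-constructs $K_3$, also $K_3$; Lemma~\ref{lem:hard} then gives NP-hardness of $\VCSP(\bA')$ for a finite-signature reduct $\bA'$ (which trivially reduces to $\VCSP(\bA)$ when the signature of $\bA$ is finite), and Theorem~\ref{thm:NP} gives membership in NP since $\Aut(\Q;<) \subseteq \Sym(\Q) = \Aut(\bA)$. So the only genuine defect is the evaluation of $(=)_0^1$ and the ensuing detour; with the values read correctly, your construction is the proof.
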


\begin{theorem}\label{thm:eqvcsp}
Let $\bA$ be a~valued structure 
with a~countably infinite domain $\Q$ over a~finite relational signature such that $\Aut(\bA) = \Sym(\Q).$
Then exactly one of the following two cases applies. 
\begin{enumerate}
    \item $(\neq)_0^\infty \in \langle \bA \rangle$ 
    and $\Dis \in \langle \bA \rangle$. In this case, 
$\bA$ pp-constructs $K_3$, and $\vcsp(\bA)$ is NP-complete. 
\item $\const\in \fPol(\bA)$ or $\inj\in \fPol(\bA)$.
In both of these cases, $\vcsp(\bA)$ is in P. 
\end{enumerate}
\end{theorem}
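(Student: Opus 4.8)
The plan is to do a case distinction on whether $\const$ or $\inj$ lies in $\fPol(\bA)$; the proof is then just a matter of assembling the lemmas of this section. I start with the tractable side. If $\const \in \fPol(\bA)$, then $\const$ is a unary constant operation, so \Cref{lem:constant-pol} gives that $\vcsp(\bA)$ is in P, and case~(2) holds. If instead $\inj \in \fPol(\bA)$, then \Cref{lem:r} gives that $\vcsp(\bA)$ is in P, and again case~(2) holds.

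It remains to treat the case $\const \notin \fPol(\bA)$ and $\inj \notin \fPol(\bA)$, where I claim case~(1) holds. Since $\Aut(\Q;<) \subseteq \Sym(\Q) = \Aut(\bA)$ and $\const \notin \fPol(\bA)$, \Cref{lem:no-const-temp} gives $(\neq)_0^\infty \in \langle\bA\rangle$. Because $(\neq)_0^\infty$ is already expressible, $\langle (\bA,(\neq)_0^\infty) \rangle = \langle\bA\rangle$, so \Cref{lem:no-r} (applicable since $\inj \notin \fPol(\bA)$) yields $(=)_0^1 \in \langle\bA\rangle$ or $\Dis \in \langle\bA\rangle$. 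In the first subcase, $(=)_0^1$ and $(\neq)_0^\infty$ both lie in $\langle\bA\rangle$, so \Cref{lem:soft-eq} gives $\Dis \in \langle\bA\rangle$ as well. Hence in both subcases $(\neq)_0^\infty \in \langle\bA\rangle$ and $\Dis \in \langle\bA\rangle$, which is exactly case~(1).

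To finish, I would derive the hardness statement uniformly from $\Dis \in \langle\bA\rangle$. Since $\Dis$ has arity $3$ and lies in $\langle\bA\rangle$, the relational structure $(\Q;\Dis)$ is a ($1$-st) pp-power of $\bA$, so $\bA$ pp-constructs $(\Q;\Dis)$; as $(\Q;\Dis)$ pp-constructs $K_3$ and pp-constructability is transitive, $\bA$ pp-constructs $K_3$. By \Cref{lem:hard}, $\bA$ has a finite-signature reduct $\bA'$ with $\vcsp(\bA')$ NP-hard; since every valued relation of $\bA'$ lies in $\langle\bA\rangle$ and both signatures are finite, \Cref{lem:expr-reduce} reduces $\vcsp(\bA')$ to $\vcsp(\bA)$ in polynomial time, so $\vcsp(\bA)$ is NP-hard, and it is in NP by \Cref{thm:NP}; thus $\vcsp(\bA)$ is NP-complete. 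Finally, the two cases are mutually exclusive: if $\Dis \in \langle\bA\rangle$, then by \Cref{lem:easy-Imp-fPol} every fractional polymorphism of $\bA$ improves $\Dis$ (identified with $\Dis_0^\infty$), but neither $\const$ nor $\inj$ preserves $\Dis$ -- $\const$ maps any triple in $\Dis$ to the all-equal triple $(0,0,0)\notin\Dis$, and $\inj$ maps, say, $(1,1,2),(1,2,2)\in\Dis$ to an injective triple $\notin\Dis$ -- so by \Cref{rem:crisp-fpol} neither improves $\Dis_0^\infty$, and therefore case~(1) precludes case~(2).

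I do not expect any genuinely new idea to be needed: all the content sits in \Cref{lem:constant-pol,lem:no-const-temp,lem:soft-eq,lem:r,lem:no-r}. The only points that will require a little care are the passage to the relational world -- explaining why $\Dis \in \langle\bA\rangle$ forces NP-hardness of $\vcsp(\bA)$ itself rather than merely of one of its reducts -- and the mutual-exclusivity argument, which must be carried out without appealing to $\mathrm{P} \neq \mathrm{NP}$.
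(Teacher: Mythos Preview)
Your proof is correct and follows essentially the same route as the paper: case-split on whether $\const$ or $\inj$ lies in $\fPol(\bA)$, invoke \Cref{lem:constant-pol,lem:r} for tractability, and in the remaining case chain \Cref{lem:no-const-temp}, \Cref{lem:no-r}, and \Cref{lem:soft-eq} to obtain $\Dis \in \langle\bA\rangle$. Your derivation of NP-completeness from $\Dis \in \langle\bA\rangle$ (via pp-constructing $K_3$, \Cref{lem:hard}, \Cref{lem:expr-reduce}, and \Cref{thm:NP}) and your disjointness argument (via \Cref{lem:easy-Imp-fPol} and the explicit witnesses that $\const$ and $\inj$ fail to preserve $\Dis$) are in fact spelled out more carefully than in the paper, which simply cites \Cref{thm:crisp-hard} and asserts that neither operation improves $\Dis$.
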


\begin{proof}
If $\const\in \fPol(\bA)$, then  Lemma~\ref{lem:constant-pol} implies that $\vcsp(\checked{\bA})$ is in P. So we suppose that $\const \notin \fPol(\bA)$ in the following; then $(\neq)_0^{\infty} \in \langle \bA \rangle$ by Lemma~\ref{lem:no-const-temp}.
If $\inj \in \fPol(\bA)$, then Lemma~\ref{lem:r} implies that $\vcsp(\bA)$ is in P. Therefore we also suppose that 
$\inj \notin \fPol(\bA)$ in the following. By Lemma~\ref{lem:no-r}, we have $(=)_0^1\in \langle \bA \rangle$ or  $\Dis \in \langle \bA \rangle$. In fact, Lemma~\ref{lem:soft-eq} implies that $\Dis \in \langle \bA \rangle$ in both cases. Then $\bA$ pp-constructs $K_3$ and $\VCSP(\bA)$ is NP-complete by Theorem~\ref{thm:crisp-hard}.
Note that neither $\const$ nor $\inj$ improves $\Dis$.
Therefore, the two cases in the statement of the theorem are disjoint.
\end{proof}


\begin{remark}
The complexity classification of equality minCSPs from \cite{equalityminCSP}, which can be viewed as VCSPs of valued structures where each relation attains only values $0$ and $1$, can be obtained as a~special case of Theorem~\ref{thm:eqvcsp}. Suppose that $\bA$ is such a~valued structure. If $\const \in \fPol(\bA)$, then $\bA$ is constant (in the terminology of~\cite{equalityminCSP}) and $\vcsp(\bA)$ is in P. 
If $\inj \in \fPol(\bA)$, then it is immediate that $\bA$ is Horn (in the terminology of~\cite{equalityminCSP}) and 
even strictly negative: otherwise, by~\cite[Lemma 16]{equalityminCSP} we have $(=)_0^1 \in \langle \bA \rangle$. But this is in contradiction to the assumption that 
$\inj \in \fPol(\bA)$, since $\inj$ applied to a~pair of equal elements and pair of distinct elements yields a~pair of
distinct elements,
increasing the cost to $1$ compared to the average cost $1/2$ of the input tuples.
Otherwise, it follows from Theorem~\ref{thm:eqvcsp} that $\vcsp(\bA)$ is NP-hard.
\end{remark}

\section{Temporal VCSPs}\label{sect:temp}
In this section we generalise the classification result from equality VCSPs to temporal VCSPs, which is the main result of this paper.

\subsection{Preliminaries on temporal CSPs} 
We first define several important relations on $\Q$  that already played a role  in the classification of temporal CSPs~\cite{tcsps-journal}; in this paper, we treat these relations in a black-box fashion. 
\begin{definition}
\label{def:rels}
Let
\begin{align*}
\Betw &:= \{(x,y,z) \in \Q^3 \mid (x<y<z) \vee (z<y<x) \}, \\
\Cyc &:= \{(x,y,z) \in \Q^3 \mid (x<y<z) \vee (y<z<x) \vee (z<x<y)\}, \\
\Sep &:= \{(x_1,y_1,x_2,y_2) \in \Q^4 
  \begin{aligned}[t]
    \mid~&(x_1 < x_2 < y_1 < y_2) \vee  (x_1 < y_2 < y_1 < x_2) \\
    \vee~&(y_1 < x_2 < x_1 < y_2)
    \vee (y_1 < y_2 < x_1 < x_2) \\
    \vee~&(x_2 < x_1 < y_2 < y_1) \vee  (x_2 < y_1 < y_2 < x_1) \\
    \vee~&(y_2 < x_1 < x_2 < y_1) \vee (y_2 < y_1 < x_2 < x_1) \},
  \end{aligned} \\
T_3 &:= \{(x,y,z) \in \Q^3 \mid (x=y<z) \vee (x=z<y)\}.
\end{align*}
\end{definition}

\begin{theorem}[Theorem 20 in \cite{tcsps-journal}]\label{thm:def-ord}
Let $\bA$ be a~relational structure with a~finite signature such that $\Aut(\Q;<)\subseteq\Aut(\bA)$. Then it satisfies at least one of the following:
\begin{itemize}
\item $\bA$ primitively positively defines $\Betw$, $\Cyc$, or $\Sep$.
\item $\const \in \Pol(\bA)$.
\item $\Aut(\bA)=\Sym(\Q)$.
\item There is a~primitive positive definition of $<$ in $\bA$.
\end{itemize}
\end{theorem}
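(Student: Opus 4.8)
The plan is to route the argument through the classification of the closed permutation groups lying above $\Aut(\Q;<)$, and then, in the remaining case, through an analysis of the binary polymorphisms of $\bA$. First I would invoke Cameron's classification of the closed groups $G$ with $\Aut(\Q;<)\subseteq G\subseteq\Sym(\Q)$ (see, e.g.,~\cite{Book}): there are exactly five, namely $\Aut(\Q;<)$, $\Aut(\Q;\Betw)$, $\Aut(\Q;\Cyc)$, $\Aut(\Q;\Sep)$, and $\Sym(\Q)$, so $\Aut(\bA)$ is one of them. If $\Aut(\bA)=\Sym(\Q)$, the third alternative holds. If $\Aut(\bA)$ is $\Aut(\Q;\Betw)$, $\Aut(\Q;\Cyc)$, or $\Aut(\Q;\Sep)$, then an analogous polymorphism analysis — the same sort carried out below for $<$, but over a structure with more symmetry — shows that $\bA$ either has a constant polymorphism (second alternative) or primitively positively defines the relation with which it shares its automorphism group, i.e., $\Betw$, $\Cyc$, or $\Sep$ (first alternative). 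This leaves the principal case $\Aut(\bA)=\Aut(\Q;<)$, in which I may additionally assume $\const\notin\Pol(\bA)$, as otherwise the second alternative holds; the goal is then to show that $<$ is primitively positively definable from $\bA$, unless $\bA$ primitively positively defines one of $\Betw$, $\Cyc$, $\Sep$.

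In the principal case I would use the Galois correspondence between polymorphisms and primitive positive definability, which is available since $\Aut(\bA)$ is oligomorphic: a relation is primitively positively definable from $\bA$ precisely if it is preserved by every polymorphism of $\bA$. Hence, if every polymorphism of $\bA$ preserves $<$, then $<$ is primitively positively definable from $\bA$ and the fourth alternative holds. Otherwise I fix a polymorphism $f$ of $\bA$ that does not preserve $<$. Using that $(\Q;<)$ is a Ramsey structure, I would replace $f$ by a polymorphism of $\bA$ that is canonical with respect to $\Aut(\Q;<)$ — its action on the orbits of $\Aut(\Q;<)$ being well-defined — and that still fails to preserve $<$, and then, as is standard in this setting, reduce to the case where this canonical polymorphism is binary.

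The binary operations on $\Q$ that are canonical with respect to $\Aut(\Q;<)$ admit a finite classification. Those that exhibit, on some pair of orbits, an order reversal (or a similar degeneracy) are the ``hard'' ones; the remaining ones — essentially the two projections, $\min$, $\max$, $\lele$ and its dual, and $\mi$, $\mx$, which underlie the tractable temporal CSPs — all preserve $<$. Since our $f$ does not preserve $<$, its canonical behaviour is of the former kind, hence one of finitely many types, and for each such type I would combine $f$ with a suitable relation of $\bA$ (one witnessing $\const\notin\Pol(\bA)$, and possibly one witnessing $\Aut(\bA)=\Aut(\Q;<)$) to write down an explicit primitive positive definition of $\Betw$, $\Cyc$, or $\Sep$, landing in the first alternative. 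The guiding intuition is that such an $f$ cannot behave like a pure order reversal on all tuples, for that would violate a relation of $\bA$ and force $\bA$ to be symmetric under reversal; it therefore genuinely interpolates between order-preserving and order-reversing behaviour, and it is precisely this interpolation that makes a betweenness-type relation pp-definable.

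The main obstacle is this last step. The appeals to Cameron's classification and to the Galois correspondence are bookkeeping, and the canonization and reduction to a binary polymorphism are routine once the Ramsey property of $(\Q;<)$ is in hand; but organising the case distinction over the degenerate canonical binary behaviours, and in each case exhibiting and verifying the primitive positive definition of the correct one of $\Betw$, $\Cyc$, $\Sep$, is delicate, and it is there that both structural hypotheses — $\const\notin\Pol(\bA)$ and $\Aut(\bA)=\Aut(\Q;<)$ — are used in full.
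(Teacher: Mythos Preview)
The paper does not give its own proof of this theorem: it is stated with the attribution ``Theorem~20 in~\cite{tcsps-journal}'' and used as a black box, with no argument supplied. There is therefore nothing in the present paper to compare your proposal against.

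That said, a brief comment on your sketch. The opening move via Cameron's classification of the five closed supergroups of $\Aut(\Q;<)$ is standard and correct, and disposing of the case $\Aut(\bA)=\Sym(\Q)$ is immediate. Your treatment of the three intermediate groups, however, is hand-waved (``an analogous polymorphism analysis''); each of these cases is itself a nontrivial statement of the same flavour as the principal case and needs its own argument.

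More importantly, in the principal case your final step is not just delicate but, as written, conceptually unclear. You propose: if some polymorphism $f$ of $\bA$ fails to preserve $<$, canonize it, and then ``combine $f$ with a suitable relation of $\bA$ \ldots\ to write down an explicit primitive positive definition of $\Betw$, $\Cyc$, or $\Sep$''. But polymorphisms are not ingredients in pp-formulas; the Galois correspondence runs the other way. Knowing that $f\in\Pol(\bA)$ tells you which relations are \emph{not} pp-definable (namely, those $f$ violates), not which ones are. To conclude that, say, $\Betw$ is pp-definable you would need to show that \emph{every} polymorphism of $\bA$ preserves $\Betw$, and the existence of one particular $f$ does not give you that. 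What actually happens in the source is closer to an analysis of the endomorphism monoid (or of unary polymorphisms obtained by fixing arguments), from which one derives direct structural information about the relations of $\bA$ and builds the pp-definition from those relations, not from $f$. Your instinct that both hypotheses $\const\notin\Pol(\bA)$ and $\Aut(\bA)=\Aut(\Q;<)$ must be exploited in full is correct, but the mechanism you describe for doing so does not quite make sense as stated.
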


To build on the results on temporal CSPs, we need the following operations on $\Q$; they will be used in a black-box fashion as well. 
By $\min$ and $\max$ we refer to the binary minimum and maximum operation on the set $\Q$, respectively. 

\begin{definition}
Let $e_{<0},e_{>0}$ be endomorphisms of $(\mathbb{Q};<)$ satisfying $e_{<0}(x)<0$ and $e_{>0}(x)>0$ for every $x\in \mathbb{Q}$.
We denote by $\pi\pi$  the binary operation on $\mathbb{Q}$ defined by 
\[\pi\pi(x,y) = \begin{cases}
      e_{<0} (x) &\text{if } x \leq  0, \\
      e_{>0}(y) &\text{if } x > 0.
\end{cases}
\]
\end{definition}

\begin{definition}  \label{def:lex_ll} 
Let $e_{<0},e_{>0}$ be endomorphisms of $(\mathbb{Q};<)$ satisfying $e_{<0}(x)<0$ and $e_{>0}(x)>0$ for every $x\in \mathbb{Q}$.
The operation $\lex$ is any binary operation on $ \mathbb{Q}$ satisfying $\lex(x,y) <\lex(x',y')$ iff $x<x'$, or $x=x'$ and $y<y'$ for all $x,x',y,y'\in \mathbb{Q}$.
We denote by $\lele$ the operation on $\mathbb{Q}$ defined by 
\[\lele(x,y) = \begin{cases}
      \lex(e_{<0}(x),e_{<0}(y)) &\text{if } x \leq  0, \\
      \lex(e_{>0}(y),e_{>0}(x)) &\text{if } x > 0.
\end{cases}
\]
\end{definition}

\begin{definition}\label{def:mi}
Let $e_<$, $e_=$ and $e_>$ be endomorphisms of $(\Q;<)$ satisfying
for all $x, \varepsilon \in \Q$, $\varepsilon > 0$,
$e_= (x) < e_{>}(x) < e_{<}(x) < e_{=}(x+\varepsilon).$
We denote by $\mi$  the binary operation on $\Q$ defined by
\[\mi(x,y) = \begin{cases}
      e_{<}(x) &\text{if } x < y, \\
      e_=(x) &\text{if } x = y, \\
      e_>(y) &\text{if } x > y.
\end{cases}
\]
\end{definition}

\begin{definition}\label{def:mx}
Let $e_=$ and $e_{\neq}$ be any endomorphisms
of $(\Q;<)$ satisfying
for all $x, \varepsilon \in \Q$, $\varepsilon > 0$, 
$e_{\neq}(x) < e_= (x) < e_{\neq}(x+\varepsilon).$
We denote by $\mx$  the binary operation on $\Q$ defined by
\[\mx(x,y) = \begin{cases}
      e_{\neq}(\min(x,y)) &\text{if } x \neq y, \\
      e_=(x) &\text{if } x = y.
\end{cases}
\]
\end{definition}

The construction of endomorphisms that appear in Definitions~\ref{def:mi} and~\ref{def:mx} can be found for example in \cite[Section 12.5]{Book}. \checked{We now give a few technical results that will be useful for the proofs in the upcoming sections.} The following \checked{lemma} was observed and used in~\cite{tcsps-journal}.

\begin{lemma}
\label{lem:lex}
If $\bA$ is a~relational structure 
such that $\Aut(\Q;<)\subseteq\Aut(\bA)$ 
and $\bA$ is preserved by a~binary injective operation $f$, then it is also preserved by the operation defined by one of $\lex(x,y)$, $\lex(-x,y)$, $\lex(x,-y)$, or $\lex(-x,-y)$. In particular, 
if $f$ preserves $\leq$ (for example, $\lele$), then $\bA$ is preserved by $\lex$. 
\end{lemma}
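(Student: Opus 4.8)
The plan is to use the theory of canonical functions. Observe first that, by quantifier elimination in $(\mathbb{Q};<)$, every relation preserved by $\Aut(\mathbb{Q};<)$ is a union of order types and is therefore preserved by \emph{every} self-embedding of $(\mathbb{Q};<)$; since $\Aut(\mathbb{Q};<)\subseteq\Aut(\bA)$, all self-embeddings of $(\mathbb{Q};<)$ preserve $\bA$. Because $(\mathbb{Q};<)$ is a homogeneous Ramsey structure, a standard canonization argument (see, e.g., \cite[Chapter 12]{Book}) produces a set $X\subseteq\mathbb{Q}$ with $(X;<)\cong(\mathbb{Q};<)$ on which $f$ is \emph{canonical} with respect to the order, i.e.\ the order relation between $f(a,b)$ and $f(a',b')$ depends only on the order relations between $a,a'$ and between $b,b'$. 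Transporting $X$ back to $\mathbb{Q}$ via an order-isomorphism $\iota\colon\mathbb{Q}\to X$, the operation $g(x,y):=f(\iota(x),\iota(y))$ is binary, injective, canonical with respect to $(\mathbb{Q};<)$, and still preserves $\bA$ (since $\iota$, as a self-embedding of $(\mathbb{Q};<)$, preserves $\bA$, and so does $f$).

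Next I would classify the canonical injective binary operations. The behaviour of $g$ is encoded by values $\rho_{\tau_1\tau_2}\in\{<,=,>\}$ for $\tau_1,\tau_2\in\{<,=,>\}$, where $g(a,b)\mathrel{\rho_{\tau_1\tau_2}}g(a',b')$ whenever $a\mathrel{\tau_1}a'$ and $b\mathrel{\tau_2}b'$; injectivity forces $\rho_{\tau_1\tau_2}\neq{=}$ unless $\tau_1=\tau_2={=}$. A short case analysis on $4$-element configurations in $\mathbb{Q}^2$, ruling out value patterns that cannot be realized by an actual linear order, shows that one coordinate is \emph{dominant}: either the order between $g(a,b)$ and $g(a',b')$ is determined by $a$ versus $a'$ whenever $a\neq a'$, or it is determined by $b$ versus $b'$ whenever $b\neq b'$; and in the tie coordinate $g$ is order-preserving or order-reversing. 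A back-and-forth argument then yields self-embeddings $\mu,\nu,\lambda$ of $(\mathbb{Q};<)$ and an operation $\ell$ among $\lex(\pm x,\pm y)$, $\lex(\pm y,\pm x)$ with $g(\mu(x),\nu(y))=\lambda(\ell(x,y))$ for all $x,y\in\mathbb{Q}$.

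Finally I would transfer preservation from $g$ to $\ell$: for tuples $t^1,t^2$ in a relation $R$ of $\bA$, the tuples $\mu(t^1),\nu(t^2)$ are again in $R$, hence $g(\mu(t^1),\nu(t^2))=\lambda(\ell(t^1,t^2))\in R$, and applying an automorphism of $(\mathbb{Q};<)$ that inverts $\lambda$ on the finitely many entries of $\ell(t^1,t^2)$ — which exists by homogeneity — gives $\ell(t^1,t^2)\in R$. Thus $\bA$ is preserved by $\ell$. Since the preservation relation for relations is invariant under exchanging the two argument tuples, $\lex(\pm y,\pm x)$ preserves $\bA$ if and only if the corresponding $\lex(\pm x,\pm y)$ does, so $\bA$ is preserved by one of $\lex(x,y),\lex(-x,y),\lex(x,-y),\lex(-x,-y)$. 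For the ``in particular'' statement, run the argument with $\bA$ replaced by its expansion $\bA^{\le}$ by the relation $\le$ (note $\Aut(\bA^{\le})=\Aut(\mathbb{Q};<)\cap\Aut(\bA)=\Aut(\mathbb{Q};<)$, so the first part applies); the resulting $\ell$ must preserve $\le$, and among the eight candidate operations only $\lex(x,y)$ and $\lex(y,x)$ do, whence $\bA$, being a reduct of $\bA^{\le}$, is preserved by $\lex$. Since $\lele$ is binary, injective, and preserves $\le$, the last clause follows.

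The main obstacle I expect is the combinatorial classification of canonical injective binary operations — the case analysis and the back-and-forth that turns the behaviour data into an explicit $\lex$-type form — together with setting up the canonization carefully so that injectivity (and, for the last part, preservation of $\le$) is retained; given the canonical-functions machinery of \cite[Chapter 12]{Book} this is routine but somewhat technical.
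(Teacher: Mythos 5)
Your proposal is essentially correct, but note that the paper itself does not prove this lemma at all: it is imported with a citation to \cite{tcsps-journal}, where the corresponding statement is established by a hands-on interpolation/Ramsey-type argument that manufactures a lexicographic operation directly from an arbitrary binary injection preserving the structure, predating the general canonical-functions formalism. Your route -- canonize $f$ over the homogeneous Ramsey order $(\Q;<)$, classify the behaviours of binary injective canonical operations (dominant coordinate, each coordinate used monotonically up or down, giving the eight $\lex(\pm\cdot,\pm\cdot)$-type behaviours), and transfer preservation back using that all relations of $\bA$ are unions of order types -- is the standard modern treatment (in the spirit of Chapter 12 of \cite{Book}) and buys a cleaner conceptual structure at the price of invoking the canonization machinery; the combinatorial core (the dominance case analysis) is the same in both approaches. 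Your reduction of the eight behaviours to the four listed ones via the argument-swap symmetry, and the derivation of the ``in particular'' clause by expanding $\bA$ with $\leq$ and checking which of the candidates preserve $\leq$, are both correct.

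Three small technical remarks. First, the canonization theorem naturally yields two self-embeddings $e_1,e_2$ of $(\Q;<)$ with $f(e_1(x),e_2(y))$ canonical, rather than a single copy $X$ used in both coordinates; this changes nothing in your argument, since $e_1,e_2$ preserve $\bA$ by the same quantifier-elimination observation. Second, you should justify that injectivity survives canonization: since the codomain is discrete, pointwise convergence means eventual agreement on each finite set of arguments, so a pointwise limit of injective functions of the form $\beta f(\alpha_1(\cdot),\alpha_2(\cdot))$ is again injective. Third, the total self-embedding $\lambda$ with $g(\mu(x),\nu(y))=\lambda(\ell(x,y))$ for \emph{all} $x,y$ is more than you need and is not immediate (an order-preserving map defined on the image of $\ell$ need not extend to all of $\Q$); it suffices to observe that canonicity forces $g(t^1,t^2)$ and $\ell(t^1,t^2)$ to have the same order type for any fixed tuples, after which preservation transfers by homogeneity exactly as in your final step.
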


\begin{proposition}[Proposition 25, 27, and 29 in \cite{tcsps-journal}]\label{prop:pp}
Let $\bA$ be a~relational structure such that $\Pol(\bA)$ contains $\min$, $\mi$, or $\mx$.
Then $\Pol(\bA)$ contains $\pi\pi$.
\end{proposition}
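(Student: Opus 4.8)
The plan is to use the \emph{local closure} of polymorphism clones: an operation $g$ belongs to $\Pol(\bA)$ as soon as, for every finite $F \subseteq \Q^k$, some term operation built from $\Pol(\bA)$ (which is closed under composition and interpolation) agrees with $g$ on $F$. Since $\Aut(\Q;<) \subseteq \Aut(\bA)$ and $(\Q;<)$ is homogeneous, every self-embedding of $(\Q;<)$ is a pointwise limit of automorphisms and hence lies in $\Pol^{(1)}(\bA)$; in particular $e_{<0}$, $e_{>0}$ and all the auxiliary endomorphisms occurring in Definitions~\ref{def:mi} and~\ref{def:mx} are available. It therefore suffices to prove, separately for each $g \in \{\min, \mi, \mx\}$, that for every finite $F \subseteq \Q^2$ there are self-embeddings $\alpha, \beta, \gamma$ of $(\Q;<)$ such that $\gamma(g(\alpha(x), \beta(y))) = \pi\pi(x,y)$ for all $(x,y) \in F$ (with $\gamma$ the identity when $g = \min$).

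To construct these, split $F$ into $F^- := \{(a,b) \in F \mid a \le 0\}$ and $F^+ := \{(a,b) \in F \mid a > 0\}$. Choose $\alpha$ mapping the first coordinates of $F^-$ to a block of very small values and those of $F^+$ to a block of very large values; this is order-preserving on the finite set of first coordinates, because each of the former is $\le 0 <$ each of the latter, so it extends to a self-embedding of $(\Q;<)$. Choose $\beta$ so that on the second coordinates of $F^+$ it takes positive values lying below all of $\alpha(F^+)$, and on the remaining second coordinates it takes values above all of $\alpha(F^-)$ while staying order-consistent; this is again realizable, since the only lower bounds forced on the "remaining" coordinates are the negative numbers in $\alpha(F^-)$, which do not conflict with the positive values pinned on the coordinates of $F^+$. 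Then for $(a,b) \in F^-$ we have $\alpha(a) < \beta(b)$, so $g(\alpha(a), \beta(b))$ depends only on $\alpha(a)$ -- it equals $\alpha(a)$, $e_<(\alpha(a))$, or $e_{\neq}(\alpha(a))$ for $g = \min, \mi, \mx$ respectively -- and for $(a,b) \in F^+$ we have $\alpha(a) > \beta(b)$, so $g(\alpha(a), \beta(b))$ depends only on $\beta(b)$. This matches the shape of $\pi\pi$, and it remains to choose $\gamma$ sending the $F^-$-outputs to $\{e_{<0}(a) \mid (a,b) \in F^-\}$ and the $F^+$-outputs to $\{e_{>0}(b) \mid (a,b) \in F^+\}$: both restrictions are order-isomorphisms, since all maps in sight are monotone injections, and a single $\gamma$ realizing both exists precisely when the whole $F^-$-output block lies below the whole $F^+$-output block. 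For $g = \min$ this is automatic ($\gamma$ is the identity, with $\alpha, \beta$ chosen so that $\alpha(a) = e_{<0}(a) < 0 < e_{>0}(b) = \beta(b)$ on the respective halves). For $g \in \{\mi, \mx\}$ one uses that, by the interleaving inequalities in Definitions~\ref{def:mi} and~\ref{def:mx}, the auxiliary endomorphisms all share a common infimum $L$ and supremum $U$ over $\Q$ with $L < U$; hence $e_<(\alpha(a))$ (resp.\ $e_{\neq}(\alpha(a))$) can be driven arbitrarily close to $L$ by taking $\alpha(a)$ small, and $e_>(\beta(b))$ (resp.\ $e_{\neq}(\beta(b))$) arbitrarily close to $U$ by taking $\beta(b)$ large, so the required separation can be arranged.

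The delicate point -- the main obstacle -- is exactly this stitching: $\pi\pi$ branches on the \emph{sign} of its first argument, whereas $\mi$ and $\mx$ branch on the \emph{comparison} of their two arguments (and on the diagonal fall into yet another branch), so one must choose $\alpha, \beta$ so that no $F$-pair ever lands on $\alpha(x) = \beta(y)$, that $F^-$-pairs land in the "$<$"-branch and $F^+$-pairs in the "$>$"-branch, and that the two resulting output blocks can be separated and then corrected to negatives and positives by a single monotone $\gamma$. Modulo this bookkeeping the argument is routine, and it is precisely this case analysis that is carried out in Propositions~25, 27, and~29 of~\cite{tcsps-journal}, which we cite.
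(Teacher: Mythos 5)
Your proposal is correct, and the comparison here is a little unusual: the paper gives no proof of this proposition at all, it is imported by citation from Propositions 25, 27 and 29 of \cite{tcsps-journal}. Your local-interpolation argument (for every finite $F \subseteq \Q^2$, exhibit self-embeddings $\alpha,\beta,\gamma$ of $(\Q;<)$ with $\gamma(g(\alpha(x),\beta(y))) = \pi\pi(x,y)$ on $F$, then use that $\Pol(\bA)$ is locally closed, closed under composition, and contains all self-embeddings of $(\Q;<)$) is in substance the same argument that the cited propositions carry out relation-by-relation, so you have effectively supplied the proof that the paper only cites. Two remarks. First, you silently add the hypothesis $\Aut(\Q;<) \subseteq \Aut(\bA)$; this is genuinely needed -- the statement as printed is false without it, e.g.\ $(\Q;\{0\})$ is preserved by $\min$ but not by $\pi\pi$, since $\pi\pi(0,0)=e_{<0}(0)<0$ -- and it is implicit both in the source (which speaks of temporal relations) and in every application in the paper, so your reading is the intended one; it would be worth saying this explicitly rather than assuming it tacitly. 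Second, the common-infimum/supremum observation is an unnecessary detour: the interleaving inequalities of Definitions~\ref{def:mi} and~\ref{def:mx} give directly that $e_<(x) < e_=(y) < e_>(y)$ for $\mi$, and $e_{\neq}(x) < e_{\neq}(y)$ for $\mx$, whenever $x<y$, so the two output blocks are separated as soon as the $\alpha$-images of the first coordinates of $F^-$ lie below the $\beta$-images of the second coordinates of $F^+$, which your block placement already guarantees. The remaining bookkeeping you defer (keeping all $F$-pairs off the diagonal $\alpha(x)=\beta(y)$, order-consistency of the finite partial maps defining $\alpha$, $\beta$, $\gamma$, and their extension to self-embeddings by homogeneity of $(\Q;<)$) does check out, including the cases where a variable value occurs as a second coordinate of pairs in both $F^-$ and $F^+$.
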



\begin{theorem}[{\cite[Theorem 5.1]{BodirskyGreinerRydval}}]\label{thm:BGR}
Let $\bA$ be a~relational structure
with $\Aut(\bA)=\Aut(\Q;<)$ and suppose that $\bA$ contains the relation $<$.
If $\pi\pi \in \Pol(\bA)$ and $\lele \notin \Pol(\bA)$, 
then the relation
\[R^{\mix} = \{(x,y,z) \in \Q^3 \mid (x=y) \vee (z<x \wedge z<y) \}\]
has a~primitive positive definition in $\bA$.
\end{theorem}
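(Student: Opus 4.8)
The plan is to construct a primitive positive definition of $R^{\mix}$ over $\bA$ directly. Two preliminary remarks shape the argument. First, a short computation with order types shows that $\pi\pi$ preserves $R^{\mix}$ whereas $\lele$ does not, and --- crucially --- that $\pi\pi$ does \emph{not} preserve the dual relation $-R^{\mix} = \{(x,y,z)\in\Q^3 \mid (x=y)\vee(z>x\wedge z>y)\}$. Since every relation obtained from $\bA$ by primitive positive operations is preserved by $\pi\pi\in\Pol(\bA)$, the dual relation can never be produced this way; this asymmetry is exactly what makes the conclusion one-sided (it is $R^{\mix}$, not $-R^{\mix}$, that becomes available). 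Second, if $\lex\in\Pol(\bA)$ then, since also $\pi\pi\in\Pol(\bA)$, Proposition~\ref{prop:tcsp} would give $\lele\in\Pol(\bA)$, contrary to hypothesis; hence $\lex\notin\Pol(\bA)$, and by the remark following Proposition~\ref{prop:tcsp} also $\lex^*\notin\Pol(\bA)$.

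Next I would extract a concrete violation of $\lele$. Since $\lele\notin\Pol(\bA)$, there are a relation $S$ of $\bA$ and tuples $a,b\in S$ with $\lele(a,b)\notin S$, while $\pi\pi(a,b)\in S$ because $\pi\pi\in\Pol(\bA)$. Comparing the two outputs: both $\pi\pi(a,b)$ and $\lele(a,b)$ split the coordinate set into $\{i\mid a_i\le 0\}$ and $\{i\mid a_i>0\}$, place the first block strictly below the second, order the first block by $a$ and the second by $b$; the only difference is that $\lele$ additionally breaks the $a$-ties in the first block lexicographically by $b$, and the $b$-ties in the second block lexicographically by $a$. Hence the order type of $\lele(a,b)$ refines that of $\pi\pi(a,b)$ only by \emph{separating} coordinates that $\pi\pi$ keeps equal, and $S$ contains the $\Aut(\Q;<)$-orbit of $\pi\pi(a,b)$ but forbids at least one such separation.

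Finally I would normalise this obstruction and conclude. Using the Ramsey property of $(\Q;<)$ --- a standard tool in the analysis of temporal polymorphisms --- I would first bring the witness $(a,b)$ into a canonical form. In the resulting situation a direct computation should identify a suitable ternary restriction $R$ of $S$ --- obtained from $S$ by identifying the coordinates that $\pi\pi(a,b)$ keeps equal apart from one pair $\{i,j\}$ involved in a forbidden separation, imposing the $=$-constraints and (in the canonical situation) the relevant $<$-constraints satisfied by $\pi\pi(a,b)$, and projecting onto $i$, $j$, and one further coordinate $k$ below them. This $R$ is built from $\bA$ by primitive positive operations, so it is preserved by $\pi\pi$; and it still refuses the forbidden separation, so it is not preserved by $\lele$, hence (by the first paragraph) not by $\lex$ or $\lex^*$. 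There are only finitely many ternary relations preserved by $\Aut(\Q;<)$, and it then suffices to check that each one that is preserved by $\pi\pi$ but not by $\lele$ primitively positively defines $R^{\mix}$: for example, with $R'(x,y,z) := (z<x)\vee(x=y)$ one has $R^{\mix}(x,y,z)\equiv R'(x,y,z)\wedge R'(y,x,z)$, and the remaining candidates (notably $T_3$) are handled by similarly short computations, while $-R^{\mix}$ does not occur, being excluded by $\pi\pi$. Composing the definition of $R^{\mix}$ from $R$ with that of $R$ over $\bA$ gives the claim.

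The step I expect to be the main obstacle is the normalisation and extraction of the last paragraph: turning an \emph{arbitrary} $\lele$-violation of a relation of arbitrary arity into a ternary restriction lying on the short list of ``canonical obstructions'', and ensuring this restriction genuinely inherits the $\lele$-violation rather than being accidentally enlarged by other tuples of $S$, requires the Ramsey-type reduction together with careful bookkeeping of order types (which pairs of coordinates tie, which is separated, and where a coordinate below the tie is found); once this is done the remaining checks are finite and routine. As an alternative organisation one can first apply Theorem~\ref{thm:tcsp}: a short check shows $\pi\pi$ preserves none of $\Betw$, $\Cyc$, $\Sep$, $-T_3$, $\Dis$, so the only NP-hard relation $\bA$ can pp-define is $T_3$; otherwise $\bA$ is preserved by $\const$, $\min$, $\mx$, $\mi$, or one of their duals (but not by $\lele$ or $\lele^*$, the latter by the dual of Lemma~\ref{lem:lex} together with Proposition~\ref{prop:tcsp}), and the $\lele$-violation can be analysed relative to that much stronger polymorphism --- but this still comes down to essentially the same order-type analysis.
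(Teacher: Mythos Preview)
The paper does not contain a proof of this statement: Theorem~\ref{thm:BGR} is quoted verbatim from~\cite{BodirskyGreinerRydval} (their Theorem~5.1) and is used here as a black box in the proof of Lemma~\ref{lem:lex-viol-crisp}. Consequently there is no ``paper's own proof'' to compare your proposal against.

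On the proposal itself: your outline follows a plausible strategy for results of this type (extract an $\lele$-violation, normalise via Ramsey-type arguments, reduce to a bounded-arity obstruction, finish by a finite case analysis), and you correctly identify the one genuinely delicate step, namely turning an arbitrary-arity violation into a ternary one that provably still violates $\lele$ while remaining pp-definable in $\bA$. As written, that step is a plan rather than an argument: you do not specify which coordinates to identify or project to, nor do you verify that the resulting ternary relation is not accidentally enlarged to something $\lele$-invariant by other tuples of $S$. Your alternative route via Theorem~\ref{thm:tcsp} is also not quite right as stated: $\pi\pi$ does preserve $\Dis$ (any operation in $\Pol(\Q;=)$ composed with projections preserves equality-language relations on each block), so you cannot exclude that case by $\pi\pi$ alone. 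If you intend to complete the argument, you will need to consult the proof in~\cite{BodirskyGreinerRydval}, where the reduction is carried out in detail.
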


\begin{definition}
The \emph{dual} of an operation $g \colon {\mathbb Q}^k \to {\mathbb Q}$ is the operation $$g^* \colon (x_1,\dots,x_k) \mapsto -g(-x_1,\dots,-x_k).$$
The \emph{dual} of a~relation $R \subseteq \Q^\ell$ is the relation
$$-R=\{(-a_1, \dots, -a_\ell) \mid (a_1, \dots, a_\ell) \in R \}.$$
\end{definition}
Note that $\min^*=\max$ and the relation $-(>)$ is equal to $<$. 
Statements about operations and relations on $\Q$ can be naturally dualised and we may refer to the dual version of a~statement.
%
%

By combining results from~\cite{tcsps-journal}
(Theorem~50, Corollary~51, Corollary~52, and the accompanying remarks), 
we obtain the following; also see Theorem 12.10.1 in~\cite{Book}. 
\begin{theorem} \label{thm:tcsp}
Let $\bA$ be a~relational structure with $\Aut(\Q; <)\subseteq \Aut(\bA)$. Then exactly one of the following is true.
\begin{enumerate}
    \item At least one of the operations $\const$, $\min$, $\mx$, $\mi$, $\lele$, or one of their duals lies in $\Pol(\bA)$. In this case, for every reduct $\bA'$ of $\bA$ with a~finite signature, $\CSP(\bA')$ is P. 
    \item $\bA$ primitively positively defines one of the relations $\Betw$, $\Cyc$, $\Sep$, $T_3$, $-T_3$, or $\Dis$. In this case, $\bA$ has a~reduct $\bA'$ with a~finite signature such that $\CSP(\bA')$ is NP-complete.
\end{enumerate}
Moreover, if $\bA$ has a finite signature, then it is decidable whether item 1 or item 2 holds.
\end{theorem}

We also need an alternative version of the classification theorem above. 

\begingroup
\setlength{\emergencystretch}{1em} 
\begin{theorem}[{{\cite[Theorem 12.0.1]{Book}}; see also {\cite[Theorem 7.24]{RydvalDescr}}}]
\label{thm:tcsp-pwnu}
Let $\bA$ be a~relational structure such that $\Aut(\Q;<) \subseteq \Aut(\bA)$. Then exactly one of the following is true:
\begin{enumerate}
\item $\Pol(\bA)$ contains a~pwnu polymorphism. In this case, for every reduct $\bA'$ of $\bA$ with a finite signature, $\CSP(\bA')$ is in P.
\item $\bA$
pp-constructs $K_3$. In this case, there exists a reduct $\bA'$ of $\bA$ with a finite signature such that $\CSP(\bA')$ is NP-complete.
\end{enumerate}
\end{theorem}
\endgroup

The following proposition relates hardness of temporal CSPs to pp-con\-struct\-ing $K_3$; it is essentially proven in \cite[Theorem 12.0.1]{Book}. 

\begin{proposition} \label{prop:hard}
The relational structures $(\Q; \Betw)$, $(\Q; \Cyc)$, $(\Q; \Sep)$, $(\Q; T_3)$, and $(\Q; -T_3)$ all pp-construct $K_3$.
\end{proposition}

\begin{proof}
This is proven in the proof of \cite[Theorem 12.0.1]{Book}. 
In fact, the proof shows that each of these structures pp-interprets all finite structures. Since $K_3$ is finite and a~pp-interpretation is a~special case of a~pp-construction, the statement follows.
\end{proof}

\subsection{A polynomial-time tractability result}
\label{sect:tract}
In this section we present the only non-trivial classes of polynomial-time
tractable temporal VCSPs.
In Lemma~\ref{lem:lex-algo} below we present a~polynomial-time algorithm for VCSPs of valued structures $\bA$ improved by $\lele$ or $\lele^*$. 





\begin{lemma} \label{lem:lex-algo}
Let $\bA$ be a~valued structure over a finite signature such that $\Aut(\Q; <)\subseteq \Aut(\bA)$. Suppose that $\checked{\lele} \in \fPol(\bA)$ \checked{or $\lele^* \in \fPol(\bA)$.}
Then $\VCSP(\bA)$ is in P.
\end{lemma}

In Algorithm~\ref{algo:lex} we provide the polynomial-time algorithm that is described in 
the proof of Lemma~\ref{lem:lex-algo}.
\checked{The high level idea of the algorithm is similar to the algorithm in Lemma~\ref{lem:r}: we detect forced equalities and collapse the corresponding variables, then we find an injective solution to the modified instance.}

\begin{algorithm}[t!]
\SetAlgoLined
\caption{\label{algo:lex} An algorithm for $\VCSP (\bA)$  where $\bA$ \checked{is improved by $\lele$ or $\lele^*$}.}  
\KwIn{An instance $(\phi,u)$ of $\VCSP(\bA)$ with variables $V = \{v_1, \dots, v_N\}$}  
\KwOut{\emph{true} or \emph{false}}   
\tcp{$\CSP(\Feas(\bA))$ is solved by the algorithm from~\cite{tcsps-journal} based on $\lele$ or $\lele^*$}
\If{$\phi$ satisfiable over $\Feas(\bA)$}
{ $E := \{(v_i, v_j) \mid f(v_i) = f(v_j) \text{ for every solution } f: V \to \Q \text{ to $\phi$ in } \Feas(\bA)\}$}
\Else{\Return \emph{false}}
$\phi' := \phi$ where $v_j$ is replaced by $v_i$ for every $(v_i,v_j) \in E$ with $i<j$

$\phi' = \phi_1'+ \dots + \phi_n'$ where $\phi_j'$ are atomic

\For{$j \in [n]$}
{$\phi_j' = R(x_1, \dots, x_k)$ for some $R \in \tau$ \\
let $y_1^{j}, \dots y_{\ell_j}^j$ be an enumeration of all distinct variables in $\{x_1, \dots, x_k\}$ \\
$S_j(y_1^{j}, \dots y_{\ell_j}^j) := R(x_1, \dots x_k)$
}
$\bB := (\Q; \Opt(S_1), \dots, \Opt(S_N))$ 

\tcp{by the assumptions $\CSP(\bB)$ is in P}

$\psi := \Opt(S_1)(y_1^{1}, \dots y_{\ell_1}^1) \wedge \dots \wedge \Opt(S_n)(y_1^n, \dots y_{\ell_n}^n)$

\tcp{it is implied that $\psi$ is satisfiable over $\bB$}

$f:=$ solution to $\psi$ over $\bB$

\If{the cost of $\phi'$ evaluated under $f$ is $\leq u$}
{\Return \emph{true}}
\Else{\Return \emph{false}}
   
\end{algorithm} 

\begin{proof}[Lemma~\ref{lem:lex-algo}]
Let $R \in \langle \bA \rangle$ be of arity $k$. 
Since \checked{$\lele$ or $\lele^*$ lies in $\fPol(\bA)$,} $\lex \in \fPol(\bA)$ \checked{by Lemma~\ref{lem:lex}}, \checked{and, in particular, $\lex$} improves $R$ \checked{by Lemma~\ref{lem:easy-Imp-fPol}}. Therefore, for every injective tuple $s \in \Q^k$ and any $t \in \Q^k$, it holds that
\begin{equation*}
R(s)=R(\lex(s,t)) \leq 1/2 \cdot (R(s)+R(t)),
\label{eq:lex}
\end{equation*}
where the first equality follows from $s$ and $\lex(s,t)$ being in the same orbit of $\Aut(\bA)$.
Therefore, if $R(s) < \infty$, then $R(s) \leq R(t)$.  In particular, there is $m_R \in \Q$ such that for every injective tuple $s \in \Q^k$, we have $R(s)=m_R$ or $R(s)=\infty$. Note that if there is at least one injective tuple $s$ with $R(s)=m_R$, then $\Opt(R)$ is the crisp relation that consists of all the tuples $t$ such that $R(t) = m_R$.

Let $(\phi,u)$ be an instance of $\vcsp(\bA)$ with variable set $V = \{v_1, \dots, v_N\}$.
Note that if we interpret $\phi$ over $\Feas(\bA)$, we can view it as an instance of $\CSP(\Feas(\bA))$ where each summand $R(x_1, \dots, x_k)$ of $\phi$ is interpreted as $\Feas(R^\bA)(x_1, \dots, x_k)$.
By the assumption \checked{and Lemma~\ref{lem:easy-Imp-fPol}}, 
$\Feas(\bA)$ is preserved by $\lele$ or $\lele^*$. 
Hence, by Theorem~\ref{thm:tcsp}, $\CSP(\Feas(\bA))$ is solvable in polynomial time and we can use the polynomial-time algorithm from \cite{tcsps-journal} based on the operation $\lele$ or $\lele^*$ to solve $\CSP(\Feas(\bA))$.
If $\phi$, viewed as a~primitive positive formula, is not satisfiable over $\Feas(\bA)$, then the minimum of $\phi$ is above every rational threshold and $(\phi,u)$ is rejected. Otherwise, 
we may compute the set $E \subseteq V^2$ of all pairs $(x,y)$ such that $f(x)=f(y)$ in every solution of $f \colon V \to \Q$ of $\phi$
over $\Feas(\bA)$ (we may assume without loss of generality that $\Feas(\bA)$ contains the relation $(\neq)_0^\infty$; since $\Feas(\bA)$ is preserved by $\lex$ it suffices to test the unsatisfiability of $\phi \wedge x \neq y$ for each of these pairs).  
It follows from the definition of $\Feas$ that for every $g \colon V \to \Q$, if $\phi$ evaluates to a~finite value in $\bA$ under the assignment $g$, then $g(x)=g(y)$ for every $(x,y) \in E$. Moreover, for every $(x,y) \in V^2\setminus E$, there exists $g \colon V \to \Q$ such that $\phi$ evaluates to a~finite value under $g$ and $g(x) \neq g(y)$.

We create a~new $\tau$-expression $\phi'$ from $\phi$ by replacing each occurrence of $v_j$ by $v_i$ for every $(v_i, v_j) \in E$ such that $i<j$. Let $V'$ be the set of variables of $\phi'$.
By the discussion above, the minimum for $\phi'$ over $\bA$ equals the minimum for $\phi$.
Moreover, for every $(x,y) \in (V')^2$, there exists $g' \colon V' \to \Q$ such that $\phi'$ evaluates to a~finite value under $g'$ and $g'(x) \neq g'(y)$.
Let $\phi' := \phi'_1 + \dots + \phi'_n$ where for every $j \in [n]$ the summand $\phi'_j$ is an atomic $\tau$-expression. We execute the following procedure for each $j \in [n]$. Let $\phi'_j=R(x_1, \dots, x_k)$. Let $y_1^j, \dots, y_{\ell_j}^j$ be an enumeration of all distinct variables that appear in $\{x_1, \dots, x_k\}$ and let $S_j$ be a~valued relation of arity $\ell_j$
defined by $S_j(y_1^j, \dots, y_{\ell_j}^j)=R(x_1, \dots, x_k)$. Clearly, $S_j$ is a~minor of $R$. Note that the relation $S_j$ might be different for every summand, even if they contain the same relation symbol $R$, due to possibly different variable identifications. Observe that, by the properties of $\phi'$, there exists an injective tuple $s^j \in \Q^{\ell_j}$ such that $S_j(s^j)$ is finite.
Note that $S_j \in \langle \bA \rangle$, and let $m_j := m_{S_j}$.
By the discussion in the beginning of the proof, $S_j(s^j)=m_{j}$ and $\Opt(S_j) \in \langle \bA \rangle_0 ^\infty$ consists of all tuples that evaluate to $m_{j}$ in $S_j$. Since $S_j$ attains only finitely many values, we can identify $m_j$ in polynomial time for every $j$.

Let $\bB$ be the relational structure with domain $\mathbb Q$ and relations $\Opt(S_1)$, $\dots$, $\Opt(S_n)$. Let $\psi$ be the instance of $\CSP(\bB)$ obtained from $\phi'$ by replacing the summand $\phi'_j$ by $\Opt(S_j)(y_1^j, \dots, y_{\ell_j}^j)$ for all $j \in [n]$; all relations in $\psi$ are crisp and hence it can be seen as a~primitive positive formula. Note that the variable set of $\psi$ is equal to $V'$.
By assumption \checked{and Lemma~\ref{lem:easy-Imp-fPol},}
\checked{$\bB$} is preserved by  
$\lele$ or $\lele^*$.
Hence, $\CSP(\bB)$ is in P by Theorem~\ref{thm:tcsp}.
Therefore, the satisfiability of $\psi$ over $\bB$ can be tested in polynomial time.
We claim that if $\psi$ is unsatisfiable, then the minimum of $\phi$ is above every rational threshold and the algorithm rejects.

We prove the claim by contraposition. 
Suppose that the minimum of $\phi$ over $\bA$ is finite. Then the minimum of $\phi'$ over $\bA$ is finite and hence there exists $f' \colon V' \to \Q$ such that $\phi'$ evaluates to a~finite value under $f'$. From all $f'$ with this property, choose $f'$ with the property that $f'(x)\neq f'(y)$ holds for as many pairs $(x,y) \in (V')^2$ as possible.
We first show that $f'$ is in fact injective. Suppose that there are $v, w \in V'$ such that $f'(v)=f'(w)$. Let $g' \colon V' \to \Q$ be such that $\phi'$ evaluates to a~finite value under $g'$ and $g'(v) \neq g'(w)$; recall that such $g'$ must exist by the construction of $\phi'$. Consider the assignment $\lex(f',g') \colon V' \to \Q$ and note that $\lex(f',g')(x) \neq \lex(f',g')(y)$ holds for all pairs $(x,y)$ such that $f'(x) \neq f'(y)$ and also $\lex(f',g')(v) \neq \lex(f',g')(w)$. Moreover, $\phi'$ evaluates to a~finite value under $\lex(f',g')$: for every $j \in [n]$, if $\phi'_j$ is of the form $R(x_1, \dots, x_k)$, then, since $\lex \in \fPol(\bA)$, 
\[R(\lex(f',g')(x_1, \dots, x_k))\leq 1/2 \cdot (R(f'(x_1, \dots x_k))+R(g'(x_1, \dots, x_k))) < \infty.\]
This contradicts our choice of $f'$. Therefore, $f'$ is injective.

Note that for every $j \in [n]$ we have $S_j(f'(y_1^j), \dots f'(y_{\ell_j}^j))<\infty$, because $\phi_j'$ evaluates to a~finite value under $f'$. Since $(f'(y_1^j), \dots f'(y_{\ell_j}^j))$ is an injective tuple, this implies $S_j(f'(y_1^j), \dots f'(y_{\ell_j}^j)) = m_j$ and $(f'(y_1^j), \dots f'(y_{\ell_j}^j)) \in \Opt(S_j)$ for every $j \in [n]$. It follows that $f'$ is a~satisfying assignment to $\psi$. Therefore, we proved that whenever $\psi$ unsatisfiable, the algorithm correctly rejects, because there is no assignment to $\phi$ of finite cost.

Finally, suppose that there exists a~solution $h' \colon V' \to \Q$ to the instance $\psi$ of $\CSP(\bB)$.
Then, for every $j \in [n]$, $\phi'_j$ takes under $h'$ the value $S_j(h'(y_1^j), \dots, h'(y_{\ell_j}^j))$. By the definition of $\Opt$, $(h'(y_1^j), \dots, h'(y_{\ell_j}^j))$ minimizes $S_j$ and therefore $h'$ minimizes $\phi'_j$. It follows that $h'$ minimizes $\phi'$ and that the cost of $\phi'$ under $h'$ is equal to $m_1 + \dots + m_n$. Since the minimum of $\phi'$ is equal to the minimum of $\phi$, the algorithm accepts if 
$m_1 + \dots + m_n \leq u$
and rejects otherwise.
This completes the algorithm and its correctness proof. It follows that $\vcsp(\bA)$ is in P. 
\end{proof}

\subsection{Expressibility of hard valued relations} 
In this section we prove several lemmas that are needed to complete the proof of the complexity classification of temporal VCSPs.
%
We introduce the following notation.
For $\alpha,\beta,\gamma \in {\mathbb Q} \cup \{\infty\}$, 
define the binary valued relation
$R_{\alpha,\beta,\gamma}$
on ${\mathbb Q}$:
\begin{align*}
    R_{\alpha,\beta,\gamma}(x,y) := \begin{cases} 
    \alpha & x=y \\
    \beta & x<y \\
    \gamma & x > y 
    \end{cases}
\end{align*}
Note that $R_{0,1,1}$ is equal to $(=)_0^1$, $R_{1,0,0}$ is equal to $(\neq)_0^1$, $R_{1,0,1}$ is equal to $(<)_0^1$, and $R_{0,0,1}$ is equal to $(\leq)_0^1$.

\begin{lemma} \label{lem:soft-ord}
Let $\bA$ be a~valued structure such that $\Aut(\Q;<) \subseteq \Aut(\bA)$ and $\alpha > \frac{1}{3}$.
If $\langle \bA \rangle$ contains
$R_{\alpha,0,1}$, 
then 
$\Cyc \in \langle \bA \rangle$.
\end{lemma}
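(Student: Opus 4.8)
The plan is to express the crisp relation $\Cyc$ as $\Opt$ of a suitable $\tau$-expression built from $R_{\alpha,0,1}$, using the freedom that $\alpha > 1/3$ gives us when we add up three copies of the valued relation around a directed triangle. First I would consider the ternary expression
\[
\psi(x,y,z) := R_{\alpha,0,1}(x,y) + R_{\alpha,0,1}(y,z) + R_{\alpha,0,1}(z,x),
\]
which lies in $\langle \bA \rangle$ since valued relational clones are closed under forming sums of atomic expressions. By Observation~\ref{obs:order-type}, $\psi(x,y,z)$ depends only on the order type of $(x,y,z)$, so I would enumerate the finitely many order types and compute the value of $\psi$ on each. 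On a tuple with $x<y<z$ (or any cyclic rotation, i.e.\ a tuple in $\Cyc$) exactly one of the three summands sees a ``$>$'' and contributes $1$, the other two contribute $0$, giving value $1$; on a tuple in $-\Cyc$ (the reverse cyclic order) two summands contribute $1$, giving value $2$; and on tuples with at least one equality among the coordinates the value is at least $\alpha + 0 + 0 = \alpha$ coming from the $x=y$ or $y=z$ or $z=x$ summand, and in fact one checks it is $\alpha$ or $2\alpha$ or $3\alpha$ depending on how many coordinates coincide. Here is where $\alpha > 1/3$ is used: it guarantees $\alpha > 1/3$, hence $3\alpha > 1$, but more to the point we need the minimum over all tuples \emph{not} in $\Cyc$ to exceed the value $1$ attained on $\Cyc$; since the non-$\Cyc$ tuples take values in $\{\alpha, 2\alpha, 3\alpha, 2\}$ and $\alpha > 1/3$ is not quite enough — one actually needs $\alpha > 1$... so let me reconsider.

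The correct fix is to first replace $\alpha$ by something larger by non-negative scaling and shifting combined with adding crisp relations, or more simply to iterate: note that $R_{\alpha,0,1}$ together with itself already lets us boost $\alpha$. Concretely, $R_{\alpha,0,1}(x,y) + R_{\alpha,0,1}(y,x)$ depends only on whether $x=y$, taking value $2\alpha$ if $x=y$ and $0+1 = 1$ if $x \neq y$; after shifting by $-1$ and scaling this is $(=)_0^{c}$ for some positive $c$ when $2\alpha > 1$, i.e.\ $\alpha > 1/2$, which still does not follow from $\alpha > 1/3$ alone. The honest route, then, is: start from $R_{\alpha,0,1}$ with $\alpha > 1/3$ and use sums of \emph{three} atomic expressions in a clever arrangement of variables — the triangle gadget above already has the property that on $\Cyc$ it is $1$, on $-\Cyc$ it is $2$, and on all equality patterns it is a positive multiple of $\alpha$ that is at least $\alpha$; since we only need $\min$ of the non-$\Cyc$ values to be strictly above $1$, and the only non-$\Cyc$ value that could be $\le 1$ is $\alpha$ itself, we need $\alpha > 1$. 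So the genuinely correct statement of what the triangle gives is: $\Opt(\psi) = \Cyc$ \emph{provided} $\min(\alpha, 2\alpha, 3\alpha, 2) > 1$, i.e.\ $\alpha > 1$; for $1/3 < \alpha \le 1$ one must first amplify $\alpha$.

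Thus the real first step is amplification: I would show that from $R_{\alpha,0,1}$ with $\alpha > 1/3$ one can express $R_{\alpha',0,1}$ with $\alpha'$ arbitrarily large (or at least $> 1$). The idea: the expression $R_{\alpha,0,1}(x,y) + R_{\alpha,0,1}(y,z) + R_{\alpha,0,1}(z,x)$ projected appropriately, or rather the expression $\min_z\big(R_{\alpha,0,1}(x,z) + R_{\alpha,0,1}(z,y)\big)$, which by the order-type analysis equals $\alpha$ if $x = y$ (take $z=x=y$), $0$ if $x<y$ (take $x<z<y$), and $\min(1+0, \alpha+\alpha, \ldots)$, careful computation needed, if $x>y$ — working this out shows one can engineer a new $R_{\alpha_2,0,1}$ with $\alpha_2$ a definable function of $\alpha$, strictly increasing past $1$ after finitely many iterations as long as we start above the fixed-point threshold $1/3$. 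Once $\alpha > 1$ is achieved, the triangle gadget $\psi$ above has $\Opt(\psi) = \Cyc$, and since valued relational clones are closed under $\Opt$, we conclude $\Cyc \in \langle \bA \rangle$. The main obstacle I anticipate is precisely pinning down the amplification step: verifying that the composition/projection operations push $\alpha$ strictly upward across the interval $(1/3, \infty)$ and identifying why $1/3$ is exactly the threshold (it should come out as the value below which the relevant ``$>$''-pattern in a composed gadget collapses the cost advantage, so that no amount of combining helps). Everything else is a finite case check via Observation~\ref{obs:order-type} plus the closure properties from Definition~\ref{def:wrelclone}.
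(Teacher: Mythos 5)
Your opening move---the triangle gadget $\psi(x,y,z):=R_{\alpha,0,1}(x,y)+R_{\alpha,0,1}(y,z)+R_{\alpha,0,1}(z,x)$ followed by $\Opt$---is exactly the paper's (one-line) proof, but you then derail on an arithmetic error in the order-type case analysis. On a tuple with exactly two equal coordinates, say $x=y<z$, the three summands contribute $\alpha$, $0$ and $1$ (the edge $(z,x)$ sees $z>x$), so the value is $1+\alpha$, not $\alpha$: whenever exactly two coordinates coincide, exactly one of the two strict comparisons around the triangle is a descent, so every such tuple costs $1+\alpha$; the value $2\alpha$ never occurs (two equality edges force all three coordinates equal). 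Hence the values of $\psi$ are $1$ on $\Cyc$, $2$ on the reverse cyclic orders, $1+\alpha$ on tuples with exactly two equal entries, and $3\alpha$ on constant tuples, so $\Opt(\psi)=\Cyc$ if and only if $\min(2,\,1+\alpha,\,3\alpha)>1$, which is precisely the hypothesis $\alpha>\frac{1}{3}$. The entire ``amplification'' detour is unnecessary, and the threshold $\frac{1}{3}$ comes exactly from the constant tuples, not from any fixed-point phenomenon.

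Because of this, the proposal as written has a genuine gap: the amplification step, which your argument actually relies on, is left unfinished (``careful computation needed''), and it would not deliver what you want. Carrying out the computation, $\min_z\big(R_{\alpha,0,1}(x,z)+R_{\alpha,0,1}(z,y)\big)$ equals $R_{\alpha',0,1}$ with $\alpha'=\min(2\alpha,1)$ (for $x>y$ one may take $z<y$ or $z>x$ at cost $1$), so iterating this composition never pushes the diagonal value strictly above $1$, contradicting your claim that it becomes ``strictly increasing past $1$ after finitely many iterations''; under your (mistaken) requirement $\alpha>1$ the argument would therefore never close. The fix is simply to redo the finite order-type check for $\psi$ correctly and conclude via closure of $\langle\bA\rangle$ under sums of atomic expressions and $\Opt$, which is the paper's argument.
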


\begin{proof}
Note that 
\[\Cyc(x,y,z) = \Opt( R_{\alpha,0,1}(x,y)+R_{\alpha,0,1}(y,z) + R_{\alpha,0,1}(z,x)).\] Therefore, $\Cyc \in \langle \bA \rangle$.
\end{proof}

\begin{lemma}\label{lem:3-val-rels}
Let $\bA$ be a~valued structure such that $\Aut(\Q;<) \subseteq \Aut(\bA)$. Let $(<)_0^\infty \in \langle \bA \rangle$. Let $\alpha, \beta, \gamma \in \mathbb{Q} \cup \{\infty\}$ be such that 
\begin{itemize}
\item $\alpha < \min(\beta, \gamma) < \infty$, or
\item $\beta \neq \gamma$ and $\beta, \gamma < \infty$.
\end{itemize}
If $R_{\alpha, \beta, \gamma} \in \langle \bA \rangle$, then
$\Cyc \in \langle \bA \rangle$. 
\end{lemma}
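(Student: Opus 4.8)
The plan is to reduce everything to showing $(<)_0^1 = R_{1,0,1} \in \langle \bA \rangle$: since $1 > \tfrac13$, Lemma~\ref{lem:soft-ord} then gives $\Cyc \in \langle \bA \rangle$. First I would normalise. Since $\langle \bA \rangle$ is closed under permuting the arguments of a valued relation (a sum consisting of a single atomic expression), and since both hypotheses are symmetric in $\beta$ and $\gamma$, I may assume $\beta \le \gamma$; after this the first hypothesis becomes $\alpha < \beta < \infty$ (with $\gamma$ possibly $\infty$) and the second becomes $\beta < \gamma < \infty$. Write $R := R_{\alpha,\beta,\gamma}$. The engine of the proof is that $\langle \bA \rangle$ is closed under forming sums of atomic expressions, projecting, shifting, and non-negative scaling, so from $R$ and $(<)_0^\infty$ I can form projections of sums of atomic expressions and then normalise them by an affine transformation. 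I will split into the cases $\alpha < \beta$ and $\alpha \ge \beta$.

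Suppose $\alpha < \beta$ (so, given the overall hypothesis, the first alternative holds and in particular $\beta < \infty$). I would first build $(=)_0^1$. Consider the binary valued relation $\psi_1(x,y) := \min_z\big(R(z,x) + R(z,y)\big)$, which lies in $\langle \bA \rangle$ by forming a sum of two atomic expressions and projecting out $z$. Running through the order types of $(z,x,y)$ and using $\alpha < \beta \le \gamma$ one gets $\psi_1(x,y) = 2\alpha$ if $x=y$ and $\psi_1(x,y) = \alpha+\beta$ if $x \ne y$ (the branches that contribute $\gamma$ are never the minimum, so $\gamma = \infty$ is harmless), that is, $\psi_1 = R_{2\alpha,\alpha+\beta,\alpha+\beta}$. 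Shifting by $-2\alpha$ and scaling by $1/(\beta-\alpha) \in \Q_{>0}$ turns this into $R_{0,1,1} = (=)_0^1 \in \langle \bA \rangle$. Now combine it with $(<)_0^\infty$: the relation $\psi_2(x,y) := \min_z\big((=)_0^1(x,z) + (<)_0^\infty(z,y)\big)$ is in $\langle \bA \rangle$, feasibility forces $z<y$, and one can choose $z = x$ at cost $0$ exactly when $x<y$, so $\psi_2 = R_{1,0,1} = (<)_0^1$, as wanted.

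Suppose instead $\alpha \ge \beta$. Then the first alternative fails, hence the second holds: $\beta < \gamma < \infty$. Here I would build $(<)_0^1$ directly from $R$ and $(<)_0^\infty$: set $\psi(x,y) := \min_z\big(R(x,z) + (<)_0^\infty(z,y)\big) \in \langle \bA \rangle$. Feasibility forces $z < y$; inspecting the order type of $(x,z,y)$ gives $\psi(x,y) = \min(\alpha,\beta,\gamma) = \beta$ when $x<y$ (one may take $z$ strictly between $x$ and $y$) and $\psi(x,y) = \gamma$ otherwise (then necessarily $z < x$), so $\psi = R_{\gamma,\beta,\gamma}$. Shifting by $-\beta$ and scaling by $1/(\gamma-\beta) \in \Q_{>0}$ yields $R_{1,0,1} = (<)_0^1$.

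The small order-type case distinctions needed to evaluate $\psi_1$, $\psi_2$, and $\psi$ are routine. The one genuine subtlety, which dictates the shape of the argument, is bookkeeping with the value $\infty$: the direct ``cyclic sum'' construction of Lemma~\ref{lem:soft-ord} (which sends $R_{\alpha,0,1}$ with $\alpha>\tfrac13$ straight to $\Cyc$) breaks down when the equality weight is too small relative to the other two or when one off-diagonal weight equals $\infty$ — and these are precisely the situations allowed by the two hypotheses. Routing the argument through first manufacturing $(=)_0^1$ (resp.\ $(<)_0^1$) and only then appealing to Lemma~\ref{lem:soft-ord} is what makes the two hypotheses suffice; getting this case split and the $\infty$-bookkeeping right is the main point to be careful about.
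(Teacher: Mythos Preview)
Your proof is correct and follows essentially the same approach as the paper: normalise to $\beta \le \gamma$, split into the cases $\alpha < \beta$ and $\alpha \ge \beta$, construct $(<)_0^1$ in each, and apply Lemma~\ref{lem:soft-ord}; your second case is verbatim the paper's. The only difference is in the first case, where the paper builds $(<)_0^1$ in a single step via $\min_z\big(R(z,x) + (<)_0^\infty(z,y)\big) = R_{\beta,\alpha,\beta}$ rather than detouring through $(=)_0^1$ as you do.
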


\begin{proof}
Without loss of generality, we may assume that $\beta \leq \gamma$, because otherwise we may consider $R_{\alpha, \gamma, \beta}(x,y) = R_{\alpha, \beta, \gamma} (y,x)$.
In the first case of the statement we have that $\alpha < \min(\beta, \gamma) = \beta < \infty$. Then
\[(<)_0^1(x,y) = \frac{1}{\beta-\alpha} \min_{z} \left( R_{\alpha, \beta, \gamma}(z,x) + (<)_0^\infty(z,y) - \alpha \right).\]

Suppose now that we are not in the first case, i.e., $\alpha \geq \min(\beta, \gamma) = \beta$, and additionally
$\beta < \gamma < \infty$ 
as in the second case of the statement.
Then for every $x,y \in \Q$
\[(<)_0^1(x,y)= \frac{1}{\gamma - \beta} \min_{z} \left( R_{\alpha, \beta, \gamma}(x,z) + (<)_0^\infty(z,y) - \beta \right).\]
Therefore, in both cases, $(<)_0^1 \in \langle \bA \rangle$. Since $(<)_0^1$ is equal to $R_{1,0,1}$, the statement follows from Lemma~\ref{lem:soft-ord}.
\end{proof}

\checked{The following lemma shows that if $\bA$ is not an equality valued structure and has a~tractable VCSP, then $\const \notin \fPol(\bA)$ implies that $(<)_0^\infty$ is pp-expressible in $\bA$. The proof is based on isolating an expressible relation not preserved by $\Sym(\Q)$ and Theorem~\ref{thm:def-ord}.}

\begin{lemma} \label{lem:expr-ord}
Let $\bA$ be a~valued structure such that $\Aut(\Q; <)\subseteq \Aut(\bA) \neq \Sym(\Q)$.
If $\const \notin \fPol(\bA)$, 
then $\langle\bA\rangle$ contains  
$\Betw$, $\Cyc$, $\Sep$, 
or $(<)_0^\infty$. 
\end{lemma}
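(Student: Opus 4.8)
\textbf{Proof plan for Lemma~\ref{lem:expr-ord}.}
The plan is to combine Lemma~\ref{lem:no-const-temp} with a case analysis driven by the relational classification theorems for temporal CSPs. First I would apply Lemma~\ref{lem:no-const-temp}: since $\const \notin \fPol(\bA)$ and $\Aut(\Q;<) \subseteq \Aut(\bA)$, we get that $(\neq)_0^\infty \in \langle \bA \rangle$ or $(<)_0^\infty \in \langle \bA \rangle$. In the second case we are immediately done, so assume $(\neq)_0^\infty \in \langle \bA \rangle$ but $(<)_0^\infty \notin \langle \bA \rangle$; in particular no crisp relation expressible in $\bA$ defines $<$. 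The goal is then to extract one of $\Betw$, $\Cyc$, or $\Sep$ from $\langle \bA \rangle$.

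The next step is to look at the crisp relational structure $\Feas(\bA)$ together with $(\neq)_0^\infty$ — or more precisely the structure $\bB := (\Q; \langle \bA \rangle_0^\infty)$, whose relations are exactly the crisp relations expressible in $\bA$ (which in particular contains $(\neq)_0^\infty$ and all of $\Feas(\bA)$). We have $\Aut(\Q;<) \subseteq \Aut(\bB)$. Since $(\neq)_0^\infty \in \langle \bA \rangle_0^\infty$, the structure $\bB$ is not preserved by $\const$ and has $\Aut(\bB) \neq \Sym(\Q)$ by hypothesis. Now I would apply Theorem~\ref{thm:def-ord} to $\bB$ (or to a suitable finite reduct of it): the options ``$\const \in \Pol(\bB)$'' and ``$\Aut(\bB) = \Sym(\Q)$'' are excluded, and the option ``$<$ is pp-definable in $\bB$'' is excluded by our standing assumption $(<)_0^\infty \notin \langle \bA \rangle$, since a pp-definition of $<$ in $\bB$ would place $(<)_0^\infty$ in $\langle \bA \rangle_0^\infty \subseteq \langle \bA \rangle$. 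Therefore $\bB$ primitively positively defines $\Betw$, $\Cyc$, or $\Sep$, and by Remark~\ref{rem:pp-def} any relation pp-definable in $(\Q; \langle \bA \rangle_0^\infty)$ lies in $\langle \bA \rangle$ (pp-definitions are built from sums of atomic expressions and projections, all of which are operations of the valued relational clone). Hence $\Betw$, $\Cyc$, or $\Sep$ belongs to $\langle \bA \rangle$, as desired.

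The only subtlety I expect is bookkeeping about finiteness of signatures: Theorem~\ref{thm:def-ord} is stated for structures with a finite signature, whereas $(\Q; \langle \bA \rangle_0^\infty)$ has an infinite signature in general. This is routine to handle: a pp-definition of $<$ or a pp-definition of one of $\Betw,\Cyc,\Sep$ uses only finitely many relations, so one may run Theorem~\ref{thm:def-ord} on a large enough finite reduct of $(\Q;\langle \bA \rangle_0^\infty)$ that contains $(\neq)_0^\infty$; the dichotomous outcomes are monotone enough that the argument goes through. Alternatively, one argues contrapositively: if none of $\Betw,\Cyc,\Sep$ is in $\langle \bA \rangle$, then by Theorem~\ref{thm:def-ord} applied to finite reducts, every finite reduct of $(\Q;\langle \bA \rangle_0^\infty)$ either is preserved by $\const$, or has automorphism group $\Sym(\Q)$, or pp-defines $<$; the first two are incompatible with $(\neq)_0^\infty$ being present and $\Aut(\bA) \neq \Sym(\Q)$, so $<$ is pp-definable, giving $(<)_0^\infty \in \langle \bA \rangle$. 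Either way the main work is just invoking the relational classification on the right crisp structure; there is essentially no new combinatorial content.
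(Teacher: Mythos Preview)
Your overall strategy matches the paper's: invoke Lemma~\ref{lem:no-const-temp}, reduce to the case $(\neq)_0^\infty \in \langle \bA \rangle$, and then feed a crisp structure into Theorem~\ref{thm:def-ord}. However, there is a genuine gap at the step where you assert that $\Aut(\bB) \neq \Sym(\Q)$ ``by hypothesis'' for $\bB = (\Q; \langle \bA \rangle_0^\infty)$.

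The hypothesis $\Aut(\bA) \neq \Sym(\Q)$ concerns the \emph{valued} structure $\bA$: it only says that some valued relation $R$ of $\bA$ takes different values on two tuples $s,t$ lying in the same $\Sym(\Q)$-orbit. To invoke Theorem~\ref{thm:def-ord} you need a \emph{crisp} relation in $\langle \bA \rangle_0^\infty$ that is not $\Sym(\Q)$-invariant, and this does not follow formally. The obvious candidates $\Feas(R)$ and $\Opt(R)$ can both be $\Sym(\Q)$-invariant: for instance, take $R$ ternary with value $0$ on all injective triples, value $1$ on the orbit $x=y<z$, and value $2$ elsewhere. Then $\Feas(R)=\Q^3$ and $\Opt(R)$ is the set of injective triples, both $\Sym(\Q)$-invariant, while $\Aut(\bA) = \Aut(\Q;<)$ and $\const \notin \fPol(\bA)$.

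Producing a crisp witness of non-$\Sym(\Q)$-invariance is exactly the content of the paper's proof: one chooses $s,t$ in the same $\Sym(\Q)$-orbit $O$ with $R(s)<R(t)$ and $R(s)$ minimal on $O$, uses $(=)_0^\infty$ and the already-obtained $(\neq)_0^\infty$ to cut $R$ down to $O$, and takes $\Opt$ of the result. This yields a crisp $S \in \langle \bA \rangle_0^\infty$ with $S \subseteq O$, $s \in S$, $t \notin S$; then Theorem~\ref{thm:def-ord} applied to $(\Q;S)$ finishes. So your plan is right in spirit, but the issue you identified as ``routine bookkeeping about finite signatures'' is not the real obstacle --- the missing idea is this explicit construction of $S$.
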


\begin{proof}
By Lemma~\ref{lem:no-const-temp}, $\langle \bA \rangle$ contains $(<)_0^\infty$ or $(\neq)_0^\infty$. If $(<)_0^\infty \in \langle \bA \rangle$, then we are done. Assume therefore $(\neq)_0^\infty \in \langle \bA \rangle$. Let $R$ be a~valued relation of $\bA$ of arity $k$ such that there exists an orbit $O$ of the action of $\Sym(\Q)$ on $\Q^k$ and $s,t \in O$ with $R(s)<R(t)$. Let $s \in O$ be such that $R(s)$ is minimal. Note that $O$ is not the orbit of constant tuples, because $\Aut(\Q;<)$ is transitive.

Consider the crisp relation $S \in \langle \bA \rangle_0^\infty$ defined by the following expression with the free variables $x_1,\dots,x_k$. 
\[
\Opt \left( R(x_1, \dots, x_k) + \sum_{(p,q) \in E_s} (=)_0^\infty(x_p,x_q) + \sum_{(p,q) \in N_s} (\neq)_0^\infty(x_p,x_q) \right)\]
Clearly, $s \in S$.
Note that a~tuple $u \in \Q^k$ lies in $O$ if and only if
$E_s \subseteq E_u$ and $N_s \subseteq N_u$.
In particular, $S \subseteq O$.  Since $R(s) < R(t)$, we have $t \notin S$. It follows that $S$ is not preserved by $\Sym(\Q)$. Moreover, $S$ is not preserved by $\const$,  because $O$ is not the orbit of constant tuples. By Theorem~\ref{thm:def-ord}, the relational structure $(\Q; S)$
admits a~primitive positive definition of $\Betw$, $\Cyc$ or $\Sep$, 
or a~primitive positive  definition of $<$. Since $S \in \langle \bA \rangle$, the statement of the lemma follows.
\end{proof}

\checked{As a next step, we use the assumption that $\bA$ is not essentially crisp and the relation $(<)_0^\infty$ to express some useful valued relations.}

\begin{lemma}\label{lem:soft-neq}
Let $\bA$ be a~valued $\tau$-structure such that $\Aut(\bA) = \Aut(\Q; <)$ and $(<)_0^\infty \in \langle\bA\rangle$. Suppose that $\bA$ is not essentially crisp. Then one of the following holds:
\begin{itemize}
    \item $\Cyc \in \langle \bA \rangle$,
    \item $(\neq)_0^1 \in \langle \bA \rangle$, or
    \item $R_{1,0,\infty} \in \langle \bA \rangle$.
\end{itemize}
\end{lemma}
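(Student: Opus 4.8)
The plan is to start from the hypothesis that $\bA$ is \emph{not} essentially crisp, so by the characterisation in Lemma~\ref{lem:ess-crisp} (or rather by its proof) there is a valued relation $R \in \tau$ of some arity $k$ that attains at least two distinct finite values $a < b$. Since $\Aut(\Q;<) \subseteq \Aut(\bA)$, the value $R(t)$ depends only on the pair $(E_t, O_t)$ by Observation~\ref{obs:order-type}, so I can pick tuples $s, t$ realising $R(s) = a$, $R(t) = b$ with $s$ of minimal value in its ``refinement class''. The idea is to use $(<)_0^\infty \in \langle \bA \rangle$ together with $\Opt$ and sums of atomic expressions to ``pin down'' the order type of $s$ as far as possible: form
\[
S(x_1,\dots,x_k) := R(x_1,\dots,x_k) + \sum_{(p,q) \in E_s} (=)_0^\infty(x_p,x_q) + \sum_{(p,q)\in O_s} (<)_0^\infty(x_p,x_q),
\]
which lies in $\langle \bA \rangle$ (note $(=)_0^\infty$ is always available and $(\neq)_0^\infty$, $(<)_0^\infty$ can be obtained from $(<)_0^\infty$). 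Restricting to the order type of $s$ forces $S$ to be essentially a binary (or lower-arity) relation after projecting onto a minimal set of ``free'' coordinates — concretely, identify all variables forced equal and keep one representative per $<$-class, reducing $S$ to a valued relation on a chain. Then $\Opt$ of the resulting expression is crisp.

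The core of the argument is then a case analysis on the shape of the minimal non-crisp binary ``residue'' obtained this way. After the reduction, I expect to be left with a valued relation $T$ on $\Q^2$ (or a minor of higher arity that behaves like one) whose value on some order type is strictly smaller than on another. Since we have $(<)_0^\infty$ in hand, by Lemma~\ref{lem:3-val-rels} the only way to \emph{avoid} getting $\Cyc \in \langle \bA \rangle$ is if the relevant three-value pattern $R_{\alpha,\beta,\gamma}$ is very restricted: if $\beta = \gamma$ (so the relation only distinguishes ``equal'' from ``unequal'') or if the finite values occur in a pattern that collapses. So I would argue: if the expressed relation distinguishes $x<y$ from $x>y$ with distinct finite values, Lemma~\ref{lem:3-val-rels} gives $\Cyc$; otherwise $\beta = \gamma$, and one can normalise (shift, non-negative scale) the surviving relation to $R_{\alpha,\beta,\beta}$ with $\alpha \ne \beta$. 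If $\alpha < \beta < \infty$, I again apply Lemma~\ref{lem:3-val-rels} (first bullet) to get $\Cyc$; if $\alpha > \beta$, then shifting by $-\beta$ and scaling makes it $R_{\mu,0,0}$ for some $\mu > 0$, and scaling further to $R_{1,0,0} = (\neq)_0^1$ gives the second conclusion. The remaining delicate case is when one of $\alpha,\beta,\gamma$ is forced to be $\infty$: combining $(<)_0^\infty$ with the partial feasibility information one can arrange the relation $R_{1,0,\infty}$ — this is the third bullet and arises precisely when the non-crisp behaviour is ``glued'' to a hard feasibility constraint $x \le y$.

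The main obstacle I anticipate is the bookkeeping in the reduction step: showing that after pinning the order type of $s$ via $(=)_0^\infty$ and $(<)_0^\infty$ and projecting away redundant coordinates, the resulting valued relation really is (equivalent to) one of the $R_{\alpha,\beta,\gamma}$ on a two-element chain, rather than something genuinely higher-arity. This requires carefully choosing $s$ and $t$ so that they differ in as few coordinate comparisons as possible (a minimality argument analogous to the one in the proof of Lemma~\ref{lem:no-r} and Lemma~\ref{lem:expr-ord}), so that the ``difference'' between $R(s)$ and $R(t)$ is localised to a single pair of coordinates whose order type can be flipped. Once that localisation is achieved, the rest is the routine shift/scale normalisation plus invocations of Lemma~\ref{lem:3-val-rels} and Lemma~\ref{lem:soft-ord}. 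I would also double-check that in every branch where I claim $(\neq)_0^1$ or $R_{1,0,\infty}$, I have only used operations ($\Opt$, projection, sums, shifting, non-negative scaling) that are available in a valued relational clone, and that the use of $\Aut(\bA) = \Aut(\Q;<)$ (as opposed to a larger group) is what prevents the ``$\beta = \gamma$'' collapse from always occurring — i.e., it is exactly the ability to distinguish $<$ from $>$ that opens up the possibility of landing in the $\Cyc$ case rather than only the $(\neq)_0^1$ case.
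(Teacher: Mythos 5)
Your overall strategy (reduce to a binary residue over the three orbits of pairs, then normalise by shift/scale and invoke Lemma~\ref{lem:3-val-rels} and Lemma~\ref{lem:soft-ord}, with $(\neq)_0^1$ and $R_{1,0,\infty}$ as the surviving exceptional patterns) is the right shape, but the central step is missing, and the construction you actually write down would fail. If you add the crisp constraints $\sum_{(p,q)\in E_s}(=)_0^\infty(x_p,x_q)+\sum_{(p,q)\in O_s}(<)_0^\infty(x_p,x_q)$, you pin the \emph{full} order type of $s$, so every tuple of finite value of your $S$ lies in the orbit of $s$ under $\Aut(\Q;<)$; since values depend only on the orbit (Observation~\ref{obs:order-type}), $S$ is then essentially crisp, and no projection of it can yield a binary relation with two distinct finite values -- which is exactly what the non-$\Cyc$ conclusions require (note also that applying $\Opt$, as you suggest, can only ever produce a crisp relation, never $(\neq)_0^1$ or $R_{1,0,\infty}$). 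You acknowledge this localisation problem as the ``main obstacle'' and gesture at a minimality argument, but you never supply it, and that argument is the actual content of the lemma. The paper's resolution: take $m<\ell$ to be the two \emph{smallest} finite values of $R$, take $t$ with $R(t)=\ell$, choose $s\in\Opt(R)$ maximising $|(E_s\cap E_t)\cup(O_s\cap O_t)|$, and constrain only by the \emph{common} order information, i.e.\ define $\phi(x_1,x_2)$ as the projection of $R+\sum_{(p,q)\in E_s\cap E_t}(=)_0^\infty+\sum_{(p,q)\in O_s\cap O_t}(<)_0^\infty$ onto a pair of coordinates where $s$ and $t$ have different order types. The maximality of $s$ then shows $\phi$ equals $m$ on the orbit of $(s_1,s_2)$ and is strictly above $m$ yet at most $\ell$ (hence exactly $\ell$, by the two-smallest-values choice) on the orbit of $(t_1,t_2)$; this is what guarantees a genuinely non-crisp binary relation to normalise. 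Without this extremal choice, your ``flip a single pair'' plan has no proof that the flipped orbit receives a different (finite) value.

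A secondary weakness: your endgame is also not tight. After normalising, one must check all six placements of the values $0$ and $1$ on the three orbits and identify precisely which patterns escape Lemma~\ref{lem:3-val-rels}; the answer is $R_{1,0,0}=(\neq)_0^1$ and $R_{1,0,\infty}$, $R_{1,\infty,0}$ (the latter reducing to $R_{1,0,\infty}$ by swapping arguments). Your treatment of the $\infty$ case (``one can arrange the relation $R_{1,0,\infty}$'') is asserted rather than derived; in the correct construction it simply \emph{is} the normalised residue in that branch, so this step needs no extra argument -- but only once the localisation above has been done properly.
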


\begin{proof}
Let $R$ be a~valued relation of $\bA$ of arity $k$ that attains at least two finite values. Let $m,\ell \in {\mathbb Q}$, with $m < \ell$, be the two smallest finite values attained by $R$. Let $t \in \Q^k$ be such that $R(t)=\ell$. 
Choose $s \in \Opt(R)$ so that $|(E_s \cap E_t) \cup (O_s \cap O_t)|$ is maximal (recall~\cref{def:E-N-O}). 
Clearly, $R(s) = m$.

Let $\sim \, \subseteq (\Q^2)^2$ be the equivalence relation with the classes $=$, $<$, and $>$.
Since $R(s) \neq R(t)$, there exist distinct $i,j$ such that $(s_i,s_j) \not \sim (t_i, t_j)$. For the sake of notation, assume that $(i,j)=(1,2)$. Let $\phi$ be the following expression with the free variables $x_1,x_2$.
\begin{align*} \min_{x_3, \dots, x_k} 
\left(
R(x_1, \dots, x_k) + \sum_{(p,q) \in E_s \cap E_t} (=)_0^\infty(x_p,x_q) + \sum_{(p,q) \in O_s \cap O_t} (<)_0^\infty(x_p,x_q) \right)
\end{align*}
Observe that $\phi(x,y) \geq m$ for all $x, y \in \Q$ and hence whenever $(x,y) \sim (s_1, s_2)$ we have $\phi(x,y) = m$. Let $(x,y) \sim (t_1, t_2)$. Then $\phi(x,y) \leq \ell$. By the choice of $s$, there is no $s' \in \Opt(R)$ that satisfies $(s'_1, s'_2) \sim (t_1, t_2)$, $(E_s \cap E_t) \subseteq E_{s'}$ and $(O_s \cap O_t) \subseteq O_{s'}$. Therefore, $\phi(x,y) >m$. It follows that $\phi(x,y) = \ell$. 

Let 
\[S(x,y) := \frac{1}{\ell-m} (\phi(x,y)-m).\]
By the construction, $S \in \langle \bA \rangle$, $S(x,y)=0$ for $(x,y) \sim (s_1,s_2)$, and $S(x,y)=1$ for $(x,y) \sim (t_1, t_2)$. Note that $\Aut(\Q;<)$ has three orbits of pairs, two of which are represented by $(s_1, s_2)$ and $(t_1, t_2)$. Let $(u_1, u_2) \in \Q^2$ be a~representative of the third orbit and let $\alpha =S(u_1, u_2)$. It follows that $S$ is equal to one of the relations $R_{0,1,\alpha}$, $R_{0,\alpha,1}$, $R_{1,0,\alpha}$, $R_{1,\alpha,0}$, $R_{\alpha,0,1}$ or $R_{\alpha,1,0}$. By the choice of $m$ and $\ell$, we have that $\alpha=0$ or $\alpha \geq 1$. By Lemma~\ref{lem:3-val-rels}, this implies that 
$\Cyc \in \langle \bA \rangle$ 
unless $S = R_{1,0,0}$, $S=R_{1,0,\infty}$, or $S=R_{1,\infty,0}$. Since $R_{1,0,0}$ is equal to $(\neq)_0^1$ and $R_{1,0, \infty}(x,y) = R_{1,\infty, 0}(y,x)$, the statement follows.
\end{proof}

\checked{The following lemma relates expressibility of hard relations to not having $\lele$ and $\lele^*$ in $\Pol(A; \langle \bA \rangle_0^\infty)$. The essential ingredient of the proof is Theorem~\ref{thm:BGR}.}
\begin{lemma}\label{lem:lex-viol-crisp}
Let $\bA$ be a~valued structure such that $\Aut(\bA)=\Aut(\Q;<)$ and $(<)_0^\infty \in \langle \bA \rangle$. Suppose that $\checked{\lele, \lele^*} \notin \Pol(\Q; \langle \bA \rangle_0^\infty)$ and that $\bA$ is not  essentially crisp.
Then $\langle \bA \rangle$ contains one of the relations $\Betw$, $\Cyc$, $\Sep$, $T_3$, $-T_3$, or $\Dis$. 
\end{lemma}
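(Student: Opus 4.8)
The plan is to transfer the problem to the relational structure $\bB := (\Q; \langle \bA \rangle_0^\infty)$. Since $(<)_0^\infty \in \langle \bA \rangle$, the relation $<$ is one of the relations of $\bB$, so $\Aut(\bB) = \Aut(\Q; <)$ and $\const \notin \Pol(\bB)$; moreover $\lex \notin \Pol(\bB)$ by hypothesis, and therefore $\lele, \lele^* \notin \Pol(\bB)$ by \Cref{lem:lex} (applied to $\lele$, and to $\lele^*$ together with the observation after \Cref{prop:tcsp} that $\lex \in \Pol(\bB)$ iff $\lex^* \in \Pol(\bB)$). First I would invoke \Cref{lem:soft-neq}: from $\Aut(\bA) = \Aut(\Q;<)$, $(<)_0^\infty \in \langle \bA \rangle$ and $\bA$ not essentially crisp, one of $\Cyc$, $(\neq)_0^1$, $R_{1,0,\infty}$ lies in $\langle \bA \rangle$. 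If $\Cyc \in \langle \bA \rangle$ we are done, so assume $(\neq)_0^1 \in \langle \bA \rangle$ or $R_{1,0,\infty} \in \langle \bA \rangle$.

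Next I would apply \Cref{thm:tcsp} to $\bB$. If $\bB$ primitively positively defines one of $\Betw, \Cyc, \Sep, T_3, -T_3, \Dis$, then that relation belongs to $\langle \bA \rangle_0^\infty \subseteq \langle \bA \rangle$ (\Cref{rem:pp-def}) and we are done. Otherwise $\bB$ is preserved by one of $\const, \min, \mx, \mi, \lele$ or a dual, and by the first paragraph only $\min, \mx, \mi$ or a dual survives. Replacing $\bA$ by $-\bA$ if necessary — which preserves all hypotheses, since $\Aut(\Q;<) = \Aut(\Q;>)$, both $(<)_0^\infty$ and $R_{1,0,\infty}$ equal their own duals up to swapping arguments, and the list of six hard relations is closed under dualization up to permuting coordinates — I may assume that $\min$, $\mx$, or $\mi$ preserves $\bB$. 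Then $\pi\pi \in \Pol(\bB)$ by \Cref{prop:pp}, and since $\lele \notin \Pol(\bB)$, \Cref{thm:BGR} gives $R^{\mix} \in \langle \bA \rangle_0^\infty \subseteq \langle \bA \rangle$.

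The remaining, and hardest, step is to combine $R^{\mix}$ with the soft relation from \Cref{lem:soft-neq} so as to express one of the six hard relations. The guiding idea is that $R^{\mix}(a,b,c)$ forces $a=b$ exactly when $c$ is not strictly below both $a$ and $b$, so, with auxiliary variables, $R^{\mix}$ can be used to penalize or forbid the ``all-strict'' order types; one then writes a hard relation as the $\Opt$ of a sum whose minimum is concentrated exactly on the desired set. Concretely, starting from $R_{1,0,\infty}(x,y) + R_{1,0,\infty}(x,z)$ — whose $\Opt$ by itself is the ($\min$-closed, hence tractable) relation $\{x < y \wedge x < z\}$ of cost $0$ — I would try to append a gadget built from $R^{\mix}$ (and further copies of $R_{1,0,\infty}$) that makes every configuration with $x$ strictly below both $y$ and $z$ strictly more expensive than those in which exactly one of $y, z$ equals $x$ and the other exceeds it; the latter set is precisely $T_3$, so the $\Opt$ would be $T_3$. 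The main obstacle is exactly this gadget: the straightforward combinations of $R^{\mix}$, $R_{1,0,\infty}$ and $<$ produce only $\min$-closed relations after $\Opt$ (because the third coordinate of $R^{\mix}$ can always be pushed arbitrarily low), so the gadget must be chosen with care about which variables to pin and with which multiplicities — I expect this is where the technical weight of the lemma lies. In the $(\neq)_0^1$ branch the analysis proceeds the same way: one uses that $(\neq)_0^\infty \in \langle \bA \rangle_0^\infty$ already excludes $\min$ and $\max$ (though not the injective operations $\mi, \mx, \lele$, which all preserve $\neq$), so this branch also relies on the $R^{\mix}$ obtained above, which is then combined with $(\neq)_0^1$ to express a hard relation (e.g.\ $\Dis$).
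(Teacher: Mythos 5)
Your reduction to the crisp structure $(\Q;\langle \bA\rangle_0^\infty)$ follows the paper's route exactly: exclude the six hard relations via Theorem~\ref{thm:tcsp}, rule out $\const$ and (via Lemma~\ref{lem:lex}) $\lele$ and $\lele^*$, obtain $\pi\pi$ from Proposition~\ref{prop:pp}, get $R^{\mix}$ from Theorem~\ref{thm:BGR}, and bring in $\Cyc$, $(\neq)_0^1$ or $R_{1,0,\infty}$ from Lemma~\ref{lem:soft-neq} (your dualization trick to reduce to the non-starred case is legitimate and replaces the paper's appeal to the dual statements). But the step you yourself flag as the ``hardest'' -- combining $R^{\mix}$ with $(\neq)_0^1$ or $R_{1,0,\infty}$ to express a hard relation -- is exactly the content of the lemma, and you do not carry it out: you only describe a hoped-for $\Opt$-gadget for $T_3$ and record an obstacle to the obvious attempts. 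So the proposal has a genuine gap at the decisive point.

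The missing idea is much simpler than the crisp $T_3$-gadget you are hunting for: do not aim at a crisp relation at all, but at the \emph{valued} relation $(<)_0^1$. One checks directly that for all $x,y\in\Q$
\[(<)_0^1(x,y) \;=\; \min_z \big(R^{\mix}(y,z,x) + (\neq)_0^1(y,z)\big) \;=\; \min_z \big(R^{\mix}(y,z,x) + R_{1,0,\infty}(y,z)\big),\]
since for $x<y$ one may take $z>y$ (both summands $0$), while for $x\geq y$ the constraint $R^{\mix}(y,z,x)$ forces $z=y$, so the minimum is $1$. Hence $(<)_0^1=R_{1,0,1}\in\langle\bA\rangle$ in either branch, and then Lemma~\ref{lem:soft-ord} (or Lemma~\ref{lem:3-val-rels}) yields $\Cyc=\Opt\big(R_{1,0,1}(x,y)+R_{1,0,1}(y,z)+R_{1,0,1}(z,x)\big)\in\langle\bA\rangle$, finishing the proof. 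Your observation that $\Opt$ of straightforward combinations stays $\min$-closed is precisely why the crisp-gadget route is a dead end; the resolution is to keep the soft costs and only apply $\Opt$ at the very end to the cyclic sum.
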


\begin{proof}
Let $\bA' := (\Q; \langle \bA \rangle_0^\infty)$. Note that $\const \notin \Pol(\bA')$, because $\const$ does not preserve $(<)_0^\infty$. If 
$\langle \bA \rangle_0^\infty$ contains one of the relations $\Betw$, $\Cyc$, $\Sep$, $T_3$, $-T_3$, or $\Dis$, then we are done. Suppose that this is not the case.
Then $\Pol(\bA')$ contains \checked{$\min$, $\mx$, $\mi$, or one of their duals by Theorem~\ref{thm:tcsp} and the assumptions of the lemma.} Suppose first that $\Pol(\bA')$ contains $\min$, $\mx$, $\mi$.
By Proposition~\ref{prop:pp}, $\Pol(\bA')$ contains $\pi\pi$.
By Theorem~\ref{thm:BGR}, $\bA'$ primitively positively defines, equivalently, contains the relation $R^{\mix}$. 
By Lemma~\ref{lem:soft-neq}, we have that
$\langle \bA \rangle$ contains $\Cyc$, $(\neq)_0^1$, or $R_{1,0,\infty}$. If $\Cyc \in \langle \bA \rangle$, then we are done. Suppose therefore that $(\neq)_0^1 \in \langle \bA \rangle$ or $R_{1,0,\infty} \in \langle \bA \rangle$.
Note that for every $x,y \in \Q$, we have
\begin{align*}
(<)_0^1(x,y) &= \min_z \big(R^{\mix}(y,z,x) + (\neq)_0^1(y,z) \big) \\
& = \min_z \big(R^{\mix}(y,z,x) + R_{1,0,\infty}(y,z) \big).
\end{align*}

Indeed, if $x<y$, then by choosing $z>y$ we get $R^{\mix}(y,z,x) + (\neq)_0^1(y,z) = R^{\mix}(y,z,x) + R_{1,0,\infty}(y,z) = 0$, which is clearly the minimal value that can be obtained. If $x \geq y$, then by choosing $z=y$ we get $R^{\mix}(y,z,x) + (\neq)_0^1(y,z) = R^{\mix}(y,z,x) + R_{1,0,\infty}(y,z) = 1$, which is clearly the minimal value, 
because if $z \neq y$ we obtain $R^{\mix}(y,z,x) + (\neq)_0^1(y,z) = R^{\mix}(y,z,x) + R_{1,0,\infty}(y,z) = \infty$.

It follows that $(<)_0^1 \in \langle \bA \rangle$. 
Observe that $(<)_0^1$ equals $R_{1,0,1}$. Therefore,
$\Cyc \in \langle \bA \rangle$ by Lemma~\ref{lem:soft-ord}, as we wanted to prove.
If $\Pol(\bA')$ contains $\min^*$, $\mx^*$, \checked{ or} $\mi^*$,
we use the dual versions of Proposition~\ref{prop:pp} and Theorem~\ref{thm:BGR} to analogously prove that $(>)_0^1 \in \langle \bA \rangle$. 
Since $(<)_0^1(x,y) = (>)_0^1(y,x)$ 
we obtain $\Cyc \in \langle \bA \rangle$  by Lemma~\ref{lem:soft-ord}.
\end{proof}

\checked{The last auxiliary lemma needed for the classification of temporal VCSPs shows that, assuming $(<)_0^\infty \in \langle \bA \rangle$, if an injective operation $f$ preserves all crisp relations expressible in $\bA$, then $f$ is a fractional polymorphism of $\bA$ or  $\VCSP(\bA)$ is NP-hard.}

\begin{lemma}\label{lem:lex-viol-soft}
Let $\bA$ be a~valued structure with $\Aut(\bA)=\Aut(\Q;<)$ and $(<)_0^\infty \in \langle \bA \rangle$. \checked{Let $f \colon \Q^2 \to \Q$ be an injective operation.} Suppose that $\checked{f} \notin \fPol(\bA)$ and $\checked{f} \in \Pol(\Q; \langle \bA \rangle_0^\infty)$. Then $\Cyc \in \langle \bA \rangle$. 
\end{lemma}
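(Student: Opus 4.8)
The strategy is to extract from the hypothesis $\lex\notin\fPol(\bA)$ a \emph{binary} valued relation in $\langle\bA\rangle$ that is not improved by $\lex$, and then to observe that any such relation forces $\Cyc\in\langle\bA\rangle$. For the second half: a binary valued relation in $\langle\bA\rangle$ is invariant under $\Aut(\Q;<)$, hence of the form $R_{\alpha,\beta,\gamma}$ for suitable $\alpha,\beta,\gamma\in\Q\cup\{\infty\}$. A short case analysis over the four possibilities for the order type of $\lex(a,b)$ (which equals that of $a$ unless $a$ has type ``$=$'') shows that $\lex$ improves $R_{\alpha,\beta,\gamma}$ if and only if every finite value among $\beta,\gamma$ equals the least finite value among $\alpha,\beta,\gamma$. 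Negating this and inspecting the few remaining cases, one finds that each of them either has $\alpha<\min(\beta,\gamma)<\infty$ or has $\beta\neq\gamma$ with $\beta,\gamma<\infty$, so Lemma~\ref{lem:3-val-rels} (applicable since $(<)_0^\infty\in\langle\bA\rangle$) yields $\Cyc\in\langle\bA\rangle$. Note also that, since $\Feas(S)\in\langle\bA\rangle_0^\infty$ is preserved by $\lex$ for every $S\in\langle\bA\rangle$, a failure of $\lex$ to improve $S$ is always witnessed by a pair of tuples on which $S$ is finite.

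For the first half, the basic computation is that for all tuples $a,b$ and $u:=\lex(a,b)$ one has $E_u=E_a\cap E_b$ and $O_u=O_a\cup(E_a\cap O_b)$; in particular $E_a\subseteq E_b$ implies that $u$ lies in the same $\Aut(\Q;<)$-orbit as $a$. Call a pair $(a,b)\in(\Q^k)^2$ a \emph{type-$1$ configuration} of a valued relation $R$ of arity $k$ if $E_a\subseteq E_b$ and $R(b)<R(a)$. Since $\lex\notin\fPol(\bA)$, there is a valued relation $R$ of $\bA$, of some arity $k$, and tuples $s,t\in\Q^k$ with $R(\lex(s,t))>\tfrac12(R(s)+R(t))$; as $\lex$ preserves $\Feas(R)\in\langle\bA\rangle_0^\infty$, all three values are finite. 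Writing $u:=\lex(s,t)$ we have $R(u)>R(s)$ or $R(u)>R(t)$; in the first case $(u,s)$ is a type-$1$ configuration of $R$ (because $E_u=E_s\cap E_t\subseteq E_s$ and $R(s)<R(u)$), and in the second case $(u,t)$ is. So $R$ admits a type-$1$ configuration.

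Now fix a type-$1$ configuration $(s,t)$ of $R$ maximizing $|A_{s,t}|$, where $A_{s,t}:=\{(p,q)\in[k]^2 : (s_p,s_q)\text{ and }(t_p,t_q)\text{ have the same order type}\}$; a maximum is attained since there are finitely many configurations up to orbit and $|A_{s,t}|\le k^2$. As $R(s)\neq R(t)$, the tuples $s,t$ lie in different orbits, so we may fix $(i,j)\in[k]^2\setminus A_{s,t}$, and since $E_s\subseteq A_{s,t}$ we have $(i,j)\notin E_s$. Set
\[
S(x_i,x_j):=\min_{(x_p)_{p\ne i,j}}\Big(R(x_1,\dots,x_k)+\sum_{(p,q)\in E_s}(=)_0^\infty(x_p,x_q)+\sum_{(p,q)\in O_s\cap O_t}(<)_0^\infty(x_p,x_q)\Big),
\]
a binary valued relation in $\langle\bA\rangle$ invariant under $\Aut(\Q;<)$, so $S=R_{\alpha,\beta,\gamma}$. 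Both $s$ and $t$ satisfy all the crisp conjuncts (using $E_s\subseteq E_t$), so $S(s_i,s_j)\le R(s)$ and $S(t_i,t_j)\le R(t)$; moreover $u$ lies in the same orbit as $s$, hence $(u_i,u_j)$ has the same order type as $(s_i,s_j)$, and $\lex((s_i,s_j),(t_i,t_j))=(u_i,u_j)$, giving $S(\lex((s_i,s_j),(t_i,t_j)))=S(u_i,u_j)=S(s_i,s_j)$.

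It remains to show $S$ is not improved by $\lex$; then the first half applies and we are done. Suppose it were. Then $S(s_i,s_j)\le\tfrac12(S(s_i,s_j)+S(t_i,t_j))$, i.e.\ $S(s_i,s_j)\le S(t_i,t_j)\le R(t)<R(s)$, so a completion $v$ of $(s_i,s_j)$ attaining the minimum in the definition of $S$ satisfies $v_i=s_i$, $v_j=s_j$, $E_s\subseteq E_v$, $O_s\cap O_t\subseteq O_v$ and $R(v)=S(s_i,s_j)<R(s)$. Thus $(s,v)$ is again a type-$1$ configuration of $R$, and the crisp conjuncts force $v$ to have the same order type as $s$ on every pair in $A_{s,t}$, while $(v_i,v_j)=(s_i,s_j)$ adds the pair $(i,j)$; hence $A_{s,v}\supsetneq A_{s,t}$, contradicting maximality. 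The delicate points I expect are exactly this maximality argument (ensuring the projected binary relation retains a $\lex$-violation) and the assembly of the binary case analysis in the first half; the manipulations with $E$, $O$ and $\lex$ are routine.
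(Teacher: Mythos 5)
Your overall strategy is sound and, at its core, parallels the paper's proof: both arguments make an extremal choice among pairs of tuples witnessing the violation, project onto two coordinates after adding crisp constraints drawn from $E$- and $O$-sets, and finish with Lemma~\ref{lem:3-val-rels}. What you do differently is to factor the argument through a complete characterization of which binary relations $R_{\alpha,\beta,\gamma}$ are improved by $\lex$ (your characterization is correct, and its negation does always land in one of the two bullets of Lemma~\ref{lem:3-val-rels}), so that it suffices to produce \emph{any} binary member of $\langle \bA \rangle$ not improved by $\lex$; the paper instead fixes a maximal set $O \subseteq O_u$ of order constraints admitting a witness of small $R$-value and reads off enough of the value pattern of the projected binary relation to invoke Lemma~\ref{lem:3-val-rels} directly, with no binary classification needed. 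Your route is a bit longer but arguably more transparent, and the binary characterization is reusable.

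There is, however, one unjustified step. You maximize $|A_{s,t}|$ over \emph{all} type-$1$ configurations, but your definition does not require $R(s)<\infty$, so the maximizer may have $R(s)=\infty$ (only $R(t)<R(s)$ is guaranteed; the finiteness you established holds for the initial configuration, not for the maximizer). In the contradiction step you pass from $S(s_i,s_j)\le \tfrac12\bigl(S(s_i,s_j)+S(t_i,t_j)\bigr)$ to $S(s_i,s_j)\le S(t_i,t_j)$, which is valid only when $S(s_i,s_j)<\infty$; a priori you only know $S(s_i,s_j)\le R(s)$, so if $R(s)=\infty$ and $(s_i,s_j)$ has no feasible completion subject to the crisp constraints, the improvement inequality reads $\infty\le\infty$, no completion $v$ with $R(v)<R(s)$ is extracted, and no contradiction with maximality follows --- in that situation $S$ may genuinely be improved by $\lex$, so the argument as written stalls. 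The repair is immediate and stays entirely within your scheme: build the requirement $R(s)<\infty$ (equivalently, both values finite) into the definition of a type-$1$ configuration. Your initial extraction produces such a configuration, and the new configuration $(s,v)$ obtained in the contradiction step keeps the same first component $s$, hence stays in the restricted class, so maximality still gives the contradiction; moreover $S(s_i,s_j)\le R(s)<\infty$ now justifies the displayed deduction. With that one-clause fix the proof is correct.
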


\begin{proof}
Let $R$ be a~valued relation of $\bA$ of arity $k$ that is not improved by $\checked{f}$. Then there exist $s,t \in \Q^k$ such that
\[R(s)+R(t) < 2 R(\checked{f}(s,t)).\]
In particular, $R(s), R(t) < \infty$. Since $\Feas(R) \in \langle \bA \rangle_0^\infty$ is improved by $\checked{f}$, we have $R(\checked{f}(s,t)) < \infty$.
Let $u := \checked{f}(s,t)$. 
Note that we must have $R(s) < R(u)$ or $R(t) < R(u)$. Moreover, $E_u = E_s \cap E_t$. Let $v \in \{s,t\}$ be such that $R(v) < R(u) < \infty$. Note that we have $E_u \subseteq E_v$.
Let $O$ be a~maximal subset of $O_u$ such that there exists $w \in \Q^k$ satisfying
\begin{itemize}
    \item $R(w) \leq R(v)$, 
    \item $E_u \subseteq E_w$, and
    \item $O \subseteq O_w$,
\end{itemize}
and let $w$ be any such witness for $O$. Such a~maximal set $O$ must exist, because $v$ satisfies these conditions for $O = \emptyset$.

Since $R(w) \neq R(u)$ and $E_u \subseteq E_{w}$, there exist $i,j \in [k]$ such that $w_i \leq w_j$ and $u_i>u_j$. Without loss of generality me may assume $(i,j)=(1,2)$, because otherwise we permute the entries of $R$. Let $\phi$ be the following expression with the free variables $x_1,x_2$.  
\begin{equation} \label{eq:no-lex}
\min_{x_3, \dots, x_k} \left( R(x_1, \dots, x_k) + \sum_{(p,q) \in E_u} (=)_0^\infty(x_p,x_q) + \sum_{(p,q) \in O} (<)_0^\infty(x_p,x_q) \right) 
\end{equation}
Let $a,b \in \Q$ such that $a<b$. Then $\phi(b,a) = \phi(u_1, u_2) \leq R(u)$, because $O \subseteq O_u$. Suppose that $\phi(b,a) \leq R(w)$. Then there exists $w' \in \Q^k$
such that $w_1'>w'_2$ and $w_3', \dots, w_k'$ realize the minimum for $\phi(b,a)$ in \eqref{eq:no-lex} 
and hence $\phi(b,a)=R(w')\leq R(w)\leq R(v)$.
In particular, the sums in \eqref{eq:no-lex} are finite. Therefore, $O \cup \{(2,1)\} \subseteq O_{w'}$ and $E_u \subseteq E_{w'}$. Since $(2,1) \in O_{u} \setminus O$, this contradicts the choice of $O$ and $w$. Therefore, $\phi(b,a) > R(w)$.
Note that $\phi(w_1, w_2) \leq R(w)$. If $w_1 < w_2$, then $\phi(a,b) \leq R(w)$ and $\phi$ expresses $R_{\alpha, \beta, \gamma}$ where $\beta = \phi(a,b)$ and $\gamma=\phi(b,a)$. In particular, $\beta < \gamma < \infty$.  Therefore, by Lemma~\ref{lem:3-val-rels},
$\Cyc \in \langle \bA \rangle$.
Otherwise we have $w_1 = w_2$. Then $\phi(a,a) \leq R(w)$ and $\phi$ expresses $R_{\alpha, \beta, \gamma}$ where $\alpha = \phi(a,a) \leq R(w) < \phi(b,a) = \gamma$. If $\beta \geq \gamma$, then $\alpha < \min(\beta, \gamma)$, and otherwise $\beta < \gamma < \infty$. In both cases, $\Cyc \in \langle \bA \rangle$ by Lemma~\ref{lem:3-val-rels}.
\end{proof}

\subsection{Classification}

We can now state and prove the complexity dichotomy for temporal VCSPs. We first phrase the classification with 4 cases, where we distinguish between the tractable cases that are based on different algorithms. As a~next step, we formulate two corollaries each of which provides two concise mutually disjoint conditions that correspond to NP-completeness and polynomial-time tractability, respectively.

\begin{theorem}\label{thm:tvcsp-clas}
Let $\bA$ be a~valued structure
such that $\Aut(\Q; <) \subseteq \Aut(\bA)$. Then at least one of the following holds:
\begin{enumerate}
    \item $\langle \bA \rangle$ contains one of the relations $\Betw$, $\Cyc$, $\Sep$, $T_3$ (see Definition~\ref{def:rels}), $-T_3$, or $\Dis$ (see~\eqref{eq:D}). In this case, $\bA$ has  a reduct $\bA'$ over a finite signature such that $\VCSP(\bA')$ is NP-complete.
    \item $\const \in \fPol(\bA)$.
    \item \checked{$\lele \in \fPol(\bA)$ or $\lele^* \in \fPol(\bA)$.}
    \item $\pi_1^2 \in \fPol(\bA)$ and $\fPol(\bA)$ contains \checked{$\min$, $\mx$, $\mi$}, or one of their duals.
\end{enumerate}
In cases 2--4, for every reduct $\bA'$ of $\bA$ over a finite signature, $\VCSP(\bA')$ is in P.
\checked{Case 1 is mutually exclusive with each of the cases 2--4.}
\end{theorem}

\begin{proof}
Note that for every reduct $\bA'$ of $\bA$, the automorphism group $\Aut(\bA')$ contains $\Aut(\bA)$ and hence is oligomorphic. If $\langle \bA \rangle$ contains one of the relations $\Betw$, $\Cyc$, $\Sep$, $T_3$, $-T_3$, or $\Dis$,
then there is a reduct $\bA'$ of $\bA$ over a finite signature such that $\VCSP(\bA')$ is NP-hard by Lemma~\ref{lem:expr-reduce} and
Theorem~\ref{thm:tcsp}.
By Theorem~\ref{thm:NP}, $\VCSP(\bA')$ is in NP, therefore it is NP-complete. If $\const \in \fPol(\bA)$, then $\const \in \fPol(\bA')$ for every reduct $\bA'$ of $\bA$ over a finite signature, and $\VCSP(\bA')$ is in P by Lemma~\ref{lem:constant-pol}. Suppose therefore that $\const \notin \fPol(\bA)$ and
that $\langle \bA \rangle$ does not contain any of the relations $\Betw$, $\Cyc$, $\Sep$, $T_3$, $-T_3$, or $\Dis$. 

Let $\bA'$ be a reduct of $\bA$ with a finite signature.
If $\Aut(\bA) = \Sym(\Q)$, then by Theorem~\ref{thm:eqvcsp}, $\inj \in \fPol(\bA) \subseteq \fPol(\bA')$ and $\VCSP(\bA')$ is in P. 
\checked{Note that for $k \in \N$ and any $s, t \in A^k$, the tuple $\inj(s,t)$ lies in the same orbit as $\lele(s,t)$ under the action of $\Aut(\bA)$, hence $\inj \in \fPol(\bA)$ implies $\lele \in \fPol(\bA)$}
and therefore item 3 is satisfied. 

Suppose now that $\Aut(\bA) \neq \Sym(\Q)$. By Lemma~\ref{lem:expr-ord} we have $(<)_0^\infty \in \langle \bA \rangle$, and hence $\Aut(\bA)=\Aut(\Q;<)$. By Lemma~\ref{lem:lex-viol-crisp}  we have that $\bA$ is essentially crisp, \checked{ 
$\lele \in \Pol(\Q; \langle \bA \rangle_0^\infty)$ or $\lele^* \in \Pol(\Q; \langle \bA \rangle_0^\infty)$.} If $\bA$ is not essentially crisp, \checked{by Lemma~\ref{lem:lex-viol-soft} we have that $\lele$ or $\lele^*$ lies in} $\fPol(\bA) \subseteq \fPol(\bA')$.
Then $\VCSP(\bA')$ is in P by Lemma~\ref{lem:lex-algo} and item 3 holds. 

Finally, suppose that $\bA$ is essentially crisp. Then by Lemma~\ref{lem:ess-crisp} we have $\pi_1^2 \in \fPol(\bA)$. 
Since $\const \not\in \fPol(\bA)$, we have 
$\const \not\in \Pol(\Feas(\bA))$ (see Remark~\ref{rem:ess-crisp-fPol}).
Since $\langle \bA \rangle$ does not contain any of the relations $\Betw$, $\Cyc$, $\Sep$, $T_3$, $-T_3$, or $\Dis$, none of these relations are primitively positively definable in 
$\Feas(\bA)$.
By Theorem~\ref{thm:tcsp}, $\Pol(\Feas(\bA)) \subseteq \Pol(\Feas(\bA'))$ contains $\min$, $\mx$, $\mi$, $\lele$, or one of their duals and $\CSP(\Feas(\bA'))$ is in P. 
Thus $\VCSP(\bA')$ is in P. By Remark~\ref{rem:ess-crisp-fPol}, $\fPol(\bA)$ contains  $\min$, $\mx$, $\mi$, $\lele$, or one of their duals. Therefore, item 4 holds.

\checked{To see that case 1 is mutually exclusive with cases 2--4, observe that none of the relations listed in item 1 is improved by any of the fractional polymorphisms $\const$, $\min$, $\mx$, $\mi$, $\lele$, or one of their duals, which proves the claim by Lemma~\ref{lem:easy-Imp-fPol}.}
\end{proof}

Recall from Section~\ref{sect:aut} that a~valued structure $\bA$ with $\Aut(\Q; <) \subseteq \Aut(\bA)$ has a~(quantifier-free) first-order definition in $\Aut(\Q; <)$ with the defining formulas being disjunctions of conjunctions of atomic formulas over $(\Q;<)$.
Using this representation of $\bA$, we obtain the decidability of the complexity dichotomy from Theorem~\ref{thm:tvcsp-clas}. 

\begin{remark}
    We also obtain decidability if arbitrary first-order formulas may be used for defining the valued relations, because every first-order formula can be effectively transformed into such a formula. This holds more generally over so-called  \emph{finitely bounded homogeneous} structures; see, e.g., \cite[Proposition 7]{RydvalThesis}. Without the finite boundedness assumption, the problem can become undecidable~\cite{BPT-decidability-of-definability}. 
\end{remark}
 
\begin{proposition}\label{prop:dec}
Given a~first-order definition of a~valued structure $\bA$ with a~finite signature
in $(\Q;<)$, 
it is decidable whether \checked{item 1 in Theorem~\ref{thm:tvcsp-clas} holds, in particular, whether} $\VCSP(\bA)$ is in P or NP-complete.
\end{proposition}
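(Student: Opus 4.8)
The plan is to show that each of the four cases in Theorem~\ref{thm:tvcsp-clas} can be effectively recognized from a quantifier-free first-order definition of $\bA$ over $(\Q;<)$, and that this suffices to decide the P versus NP-complete question. The key observation is that all relevant conditions—membership of a fixed operation in $\fPol(\bA)$, membership of a fixed relation in $\langle\bA\rangle$, and membership of a fixed operation in $\Pol(\hat\bA)$—reduce to finitely many checks over finitely many orbits, because $\Aut(\Q;<)$ is oligomorphic and every relation involved is a finite union of orbits of tuples, explicitly computable from its quantifier-free definition.

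First I would record the easy decidability facts. Checking whether a fixed operation $g$ (such as $\const$, $\pi_1^2$, $\min$, $\mx$, $\mi$, $\lele$, or one of their duals) improves a given valued relation $R$ of arity $k$ amounts to verifying inequality \eqref{eq:fpol} for all $t^1,\dots,t^\ell\in\Q^k$; since $R(t)$ depends only on the order type of $t$ (Observation~\ref{obs:order-type}), and the order type of $g(t^1,\dots,t^\ell)$ is determined by the order types of the $t^j$, there are only finitely many cases, all computable. Hence membership of any fixed operation in $\fPol(\bA)$ is decidable. Likewise, $\Feas(R^\bA)$ and each minor $\Opt((R^\bA)_\sigma)$ are finite unions of orbits computable from the definition of $R^\bA$, so $\hat\bA$ is effectively computable as a relational structure with a finite quantifier-free definition over $(\Q;<)$; then whether $\Pol(\hat\bA)$ contains any fixed one of $\min,\mx,\mi,\lele$ or a dual is decidable (each such operation preserves $\hat\bA$ iff it preserves each of its finitely many relations, again a finite check). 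This already makes Conditions (2), (3), and (4) of Theorem~\ref{thm:tvcsp-clas} decidable.

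The remaining point is Condition (1): whether $\langle\bA\rangle$ contains one of $\Betw$, $\Cyc$, $\Sep$, $T_3$, $-T_3$, or $\Dis$. Here the plan is \emph{not} to decide expressibility directly (which is not obviously decidable), but to use the structure of the proof of Theorem~\ref{thm:tvcsp-clas} to replace Condition (1) by an equivalent, manifestly decidable condition. Concretely, I would argue that when $\const\notin\fPol(\bA)$, Condition (1) holds if and only if $\Pol(\hat\bA)$ contains \emph{none} of $\min,\mx,\mi,\lele$ and their duals, or ($\bA$ is not essentially crisp and $\lex\notin\fPol(\bA)$), or similar finitely-checkable disjuncts; this equivalence is exactly what the case analysis in the proof of Theorem~\ref{thm:tvcsp-clas} establishes, via Lemmas~\ref{lem:lex-viol-crisp} and~\ref{lem:lex-viol-soft} together with Theorem~\ref{thm:tcsp}. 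Being essentially crisp is decidable (each $R^\bA$ attains finitely many computable values, check at most one is finite), and $\lex\in\fPol(\bA)$ is decidable as above. So the algorithm is: compute $\hat\bA$; test whether $\const\in\fPol(\bA)$ (if so, output P); otherwise test the finitely many operation-membership and essential-crispness conditions that pin down which of Cases (1), (3), (4) applies; output NP-complete exactly in Case (1) and P otherwise. Correctness follows because Theorem~\ref{thm:tvcsp-clas} guarantees at least one case holds and Cases (2)–(4) all give P while Case (1) gives NP-complete, and the tests are arranged to be mutually exhaustive.

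The main obstacle I anticipate is the bookkeeping around Condition (1): one must be careful that the disjunction of decidable conditions used to detect it is genuinely equivalent to "$\langle\bA\rangle$ contains one of the six hard relations", which requires re-reading the proof of Theorem~\ref{thm:tvcsp-clas} and checking that each implication "hard relation expressible $\Rightarrow$ some decidable condition" and its converse is actually proved there (the converse direction, that the decidable conditions force a hard relation, is the content of Lemmas~\ref{lem:lex-viol-crisp}, \ref{lem:lex-viol-soft}, \ref{lem:no-r}, \ref{lem:soft-eq}, while the forward direction uses that $\const,\inj,\lex,\min,\dots$ do not improve the hard relations). A secondary subtlety is the equality case $\Aut(\bA)=\Sym(\Q)$, which must be detected (decidable: check whether every transposition preserves every $R^\bA$, equivalently whether every relation is a union of $\Sym(\Q)$-orbits) and handled via Theorem~\ref{thm:eqvcsp} and Remark~\ref{rem:dec-evcsp}; but since the equality case always lands in Case (3) or Case (1), it is absorbed into the general scheme without extra work.
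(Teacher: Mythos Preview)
Your proposal is essentially correct and follows the same strategy as the paper: decide Conditions (2)--(4) of Theorem~\ref{thm:tvcsp-clas} by finite orbit checks (using that $\hat\bA$ is effectively computable and that preservation by each of the listed operations is decidable, as recorded in Theorem~\ref{thm:tcsp}), and conclude Condition~(1) by exclusion via the exhaustiveness in Theorem~\ref{thm:tvcsp-clas}.

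Two small remarks. First, the paper's proof handles the potential overlap of Case~(1) with Cases~(2)--(4) by an explicit ``if $\mathrm{P}=\mathrm{NP}$ the question is trivial'' step, after which disjointness is guaranteed; you instead simply output ``P'' if any of (2)--(4) holds and ``NP-complete'' otherwise, which is already a correct label in every case and so avoids the case split entirely---arguably cleaner. Second, your attempt to recast Condition~(1) as a biconditional (``(1) holds iff $\Pol(\hat\bA)$ contains none of \dots, or \dots'') is both unnecessary and not actually established by the cited lemmas: the proof of Theorem~\ref{thm:tvcsp-clas} only gives the direction ``not (1) $\Rightarrow$ one of (2)--(4)'', and the converse would require $\mathrm{P}\neq\mathrm{NP}$. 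Drop that paragraph; your final algorithm does not use the biconditional, only the exhaustiveness, and that is enough.
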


\begin{proof}
\checked{Since by Theorem~\ref{thm:tvcsp-clas}, item 1 in Theorem~\ref{thm:tvcsp-clas} is mutually exclusive with any of items 2--4, it is enough to show that it is decidable whether items 2--4 hold.}
Since $\bA$ has a~finite signature, we can decide whether $\const$ improves $\bA$, i.e., whether item 2 holds, by checking for each valued relation of $\bA$ whether the value of a constant tuple is minimal. Similarly, we can decide whether \checked{operations $\lele$ or $\lele^*$ improve} $\bA$: \checked{given a valued relation $R$ of $\bA$ of arity $k$, we need to choose finitely many tuples from each orbit of $A^k$ under the action of $\Aut(\bA)$ representing all distributions of values between non-positive and positive entries and test whether the relation is improved on pairs of these orbit representatives.} 
Therefore, we can decide whether item 3 holds. Finally, we can decide whether $\pi_1^2$ improves $\bA$, \checked{equivalently, whether} $\bA$ is essentially crisp by Lemma~\ref{lem:ess-crisp}. In this case $\fPol(\bA)$ contains $\min$, $\mx$, $\mi$, $\lele$, or one of their duals if and only if $\Pol(\Feas(\bA))$ does (\cref{rem:ess-crisp-fPol}), which can be decided by Theorem~\ref{thm:tcsp}.
It follows that we can decide whether \checked{any of} items 2--4 holds, which implies the statement.
\end{proof}

We reformulate Theorem~\ref{thm:tvcsp-clas} with two mutually exclusive cases that capture the respective complexities of the VCSPs \checked{and relate it to the complexity of reducts of $(A; \langle \bA \rangle_0^\infty)$.}

\begin{corollary}\label{cor:tvcsp-clas}
Let $\bA$ be a~valued structure with a~finite signature such that $\Aut(\Q; <) \subseteq \Aut(\bA)$. Then exactly one of the following holds.
\begin{enumerate}
\item $\langle \bA \rangle$ contains one of the relations $\Betw$, $\Cyc$, $\Sep$, $T_3$, $-T_3$, or $\Dis$. In this case, $\VCSP(\bA)$ is NP-complete.
\item $(\Q; \langle \bA \rangle_0^\infty)$ is preserved by one of the operations $\const$, $\min$, $\mx$, $\mi$, $\lele$, or one of their duals. In this case, $\VCSP(\bA)$ is in P.
\end{enumerate}
\end{corollary}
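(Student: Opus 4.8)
The plan is to derive Corollary~\ref{cor:tvcsp-clas} from Theorem~\ref{thm:tvcsp-clas}, together with Theorem~\ref{thm:tcsp} and Lemma~\ref{lem:no-const-temp}. The one point that needs care is that condition~(3) of Theorem~\ref{thm:tvcsp-clas} records preservation of $\hat{\bA}$, and $\hat{\bA}$ has fewer relations than $(\Q;\langle\bA\rangle_0^\infty)$, hence possibly strictly more polymorphisms; so to obtain condition~(2) of the corollary I would not try to transfer the conclusion from the theorem statement but re-establish preservation of $(\Q;\langle\bA\rangle_0^\infty)$ directly, exactly as in the proof of Theorem~\ref{thm:tvcsp-clas}. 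Note throughout that every relation in $\langle\bA\rangle$, and hence every relation in $\langle\bA\rangle_0^\infty$, is preserved by $\Aut(\bA)\supseteq\Aut(\Q;<)$, so Theorem~\ref{thm:tcsp} may be applied to $(\Q;\langle\bA\rangle_0^\infty)$ even though this structure has an infinite signature.

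First I would check that the two cases are mutually exclusive. Suppose both hold. Since every relation in the list of~(1) is crisp, $(\Q;\langle\bA\rangle_0^\infty)$ has one of them, say $R$, among its relations, and therefore primitively positively defines $R$. The two alternatives of Theorem~\ref{thm:tcsp} are mutually exclusive for every relational structure with $\Aut(\Q;<)$ in its automorphism group (this is how the theorem is used in the proof of Theorem~\ref{thm:tvcsp-clas} already), so $(\Q;\langle\bA\rangle_0^\infty)$ is preserved by none of $\const$, $\min$, $\mx$, $\mi$, $\lele$, or their duals, contradicting~(2).

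Next I would show that at least one of the two cases occurs. Assume~(1) fails; then $\langle\bA\rangle$ contains none of the listed relations, and since $\langle\bA\rangle_0^\infty$ is a relational clone (Remark~\ref{rem:pp-def}), $(\Q;\langle\bA\rangle_0^\infty)$ primitively positively defines none of them either. If $\const\in\fPol(\bA)$, then $\const$ improves every valued relation in $\langle\bA\rangle$ by Lemma~\ref{lem:easy-Imp-fPol}, and improving a crisp relation is the same as preserving it (Remark~\ref{rem:crisp-fpol}), so $\const$ preserves $(\Q;\langle\bA\rangle_0^\infty)$ and~(2) holds. Otherwise $\const\notin\fPol(\bA)$, and by Lemma~\ref{lem:no-const-temp} $\langle\bA\rangle$ contains $(\neq)_0^\infty$ or $(<)_0^\infty$; this relation is crisp and hence lies in $\langle\bA\rangle_0^\infty$, so $\const$ does not preserve $(\Q;\langle\bA\rangle_0^\infty)$. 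Applying Theorem~\ref{thm:tcsp} to $(\Q;\langle\bA\rangle_0^\infty)$, its first alternative must hold, so this structure is preserved by one of $\const$, $\min$, $\mx$, $\mi$, $\lele$, or a dual; since $\const$ is excluded, one of the remaining operations preserves it, and~(2) holds.

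Finally I would read off the complexities. In the first case, the case-(1) part of Theorem~\ref{thm:tvcsp-clas} provides a reduct $\bA'$ of $\bA$ over a finite signature with $\VCSP(\bA')$ NP-complete; since every instance of $\VCSP(\bA')$ is an instance of $\VCSP(\bA)$ and $\VCSP(\bA)$ is in NP by Theorem~\ref{thm:NP}, $\VCSP(\bA)$ is NP-complete. In the second case, mutual exclusivity shows that the first case fails, hence by Theorem~\ref{thm:tvcsp-clas} one of its conditions~(2),~(3),~(4) holds, each of which asserts that $\VCSP(\bA')$ is in P for every reduct $\bA'$ of $\bA$ over a finite signature; taking $\bA'=\bA$ shows that $\VCSP(\bA)$ is in P. The only genuinely delicate point is the one flagged above, namely arguing about $(\Q;\langle\bA\rangle_0^\infty)$ rather than $\hat{\bA}$ when establishing that at least one of the two cases holds; everything else is bookkeeping on top of Theorem~\ref{thm:tvcsp-clas}.
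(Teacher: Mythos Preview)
Your proof is correct and follows essentially the same route as the paper: both apply Theorem~\ref{thm:tcsp} directly to $(\Q;\langle\bA\rangle_0^\infty)$ to get the dichotomy between the two cases, and both invoke Theorem~\ref{thm:tvcsp-clas} for the complexity consequences. The paper's version is somewhat leaner: it simply observes that the two alternatives of Theorem~\ref{thm:tcsp} for $\bA':=(\Q;\langle\bA\rangle_0^\infty)$ coincide with items~(1) and~(2) of the corollary via the equivalence ``$R$ is pp-definable in $\bA'$'' $\Leftrightarrow$ ``$R\in\langle\bA\rangle_0^\infty$'' $\Leftrightarrow$ ``$R\in\langle\bA\rangle$'' (for crisp $R$), so mutual exclusivity and exhaustiveness follow at once. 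Your case split on whether $\const\in\fPol(\bA)$ is sound but unnecessary, since Theorem~\ref{thm:tcsp} already covers the $\const$ case; likewise, your concern about $\hat{\bA}$ versus $(\Q;\langle\bA\rangle_0^\infty)$ is well-placed, and the paper resolves it exactly as you do, by working with $(\Q;\langle\bA\rangle_0^\infty)$ directly rather than trying to upgrade condition~(3) of Theorem~\ref{thm:tvcsp-clas}.
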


\begin{proof}
Let $\bA':=(\Q; \langle \bA \rangle_0^\infty)$.
Theorem~\ref{thm:tcsp} states that either 
$\bA'$ primitively positively defines one of the relations $\Betw$, $\Cyc$, $\Sep$, $T_3$, $-T_3$, $\Dis$, or $\Pol(\bA')$ contains $\const$, $\min$, $\mx$, $\mi$, $\lele$, or one of their duals. Clearly, $\bA'$ primitively positively defines a~relation $R$ if and only if $R \in \langle \bA \rangle_0^\infty$, which is the case if and only if $R \in \langle \bA \rangle$.

It remains to discuss the implications for the complexity of $\VCSP(\bA)$. 
If item 1 holds, then $\VCSP(\bA)$ is NP-complete by Theorem~\ref{thm:tvcsp-clas}.
On the other hand, if item 1 does not hold, one of the items 2--4 in Theorem~\ref{thm:tvcsp-clas} applies and $\VCSP(\bA)$ is in P. 
\end{proof}

Note that the corollary above implies that if $\Aut(\Q; <) \subseteq \Aut(\bA)$, then the complexity of $\VCSP(\bA)$ is up to polynomial-time reductions determined by the complexity of the crisp relations $\bA$ can \checked{pp-}express. Loosely speaking, the complexity of such a~VCSP is determined solely by the CSPs that can be encoded in this VCSP. We formulate an alternative and more concise variant of the previous result in the style of Theorem~\ref{thm:tcsp-pwnu}.

\begin{corollary}\label{cor:clas-refor}
Let $\bA$ be a~valued structure with a finite signature such that $\Aut(\Q; <) \subseteq \Aut(\bA)$. Then exactly one of the following holds.
\begin{enumerate}
\item $\bA$, \checked{or, equivalently, $(A; \langle \bA \rangle_0^\infty)$} pp-constructs $K_3$. In this case, $\VCSP(\bA)$ is NP-complete.
\item $\Pol(\Q; \langle \bA \rangle_0^\infty)$ contains a~pwnu operation. In this case, $\VCSP(\bA)$ is in P.
\end{enumerate}
\end{corollary}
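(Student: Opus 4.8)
The plan is to derive this corollary from the complexity dichotomy for temporal (crisp) CSPs together with the already established Corollary~\ref{cor:tvcsp-clas}, exploiting the fact that pp-constructability of $K_3$ depends only on the crisp relations expressible in $\bA$. Throughout, write $\bA' := (\Q; \langle \bA \rangle_0^\infty)$, a relational structure with $\Aut(\Q;<) \subseteq \Aut(\bA) \subseteq \Aut(\bA')$.

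First I would show that the two conditions are mutually exclusive and exhaustive. By Proposition~\ref{prop:pp-constr-rel}, $\bA$ pp-constructs $K_3$ if and only if $\bA'$ does; and by Remark~\ref{rem:pp-const-crisp} this is in turn the same as $\Pol(\bA')$ pp-constructing $K_3$ in the sense used in Theorem~\ref{thm:tcsp-pwnu}. Applying Theorem~\ref{thm:tcsp-pwnu} to $\bA'$ --- note that the dichotomy there does not require a finite signature, although $\bA'$ may have one --- exactly one of ``$\Pol(\bA')$ contains a pwnu operation'' and ``$\Pol(\bA')$ pp-constructs $K_3$'' holds. Chaining these equivalences, condition (1) of the corollary holds if and only if condition (2) fails.

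It remains to read off the complexity. If (1) holds, then $\bA$ pp-constructs $K_3$, so by Lemma~\ref{lem:hard} some reduct $\bA''$ of $\bA$ over a finite signature has $\VCSP(\bA'')$ NP-hard; since every instance of $\VCSP(\bA'')$ is also an instance of $\VCSP(\bA)$, $\VCSP(\bA)$ is NP-hard, and it lies in NP by Theorem~\ref{thm:NP}, hence is NP-complete. If (2) holds, then by the previous paragraph $\bA$ does not pp-construct $K_3$, and consequently $\langle \bA \rangle$ contains none of $\Betw$, $\Cyc$, $\Sep$, $T_3$, $-T_3$, $\Dis$: otherwise such a relation, being crisp, lies in $\langle \bA \rangle_0^\infty$, hence is a relation of $\bA'$; since each of $(\Q;\Betw)$, $(\Q;\Cyc)$, $(\Q;\Sep)$, $(\Q;T_3)$, $(\Q;-T_3)$, $(\Q;\Dis)$ pp-constructs $K_3$ (Proposition~\ref{prop:hard}, and the fact recalled before Theorem~\ref{thm:crisp-hard} for $\Dis$), transitivity of pp-constructability together with Proposition~\ref{prop:pp-constr-rel} would force $\bA$ to pp-construct $K_3$, a contradiction. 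Thus case (1) of Corollary~\ref{cor:tvcsp-clas} fails, so its case (2) holds and $\VCSP(\bA)$ is in P.

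The only delicate point --- the main ``obstacle'', such as it is --- is reconciling the two formulations of hardness that appear in the results we invoke: ``pp-constructs $K_3$'' (Theorem~\ref{thm:tcsp-pwnu} and the statement of this corollary) versus ``pp-defines one of $\Betw, \Cyc, \Sep, T_3, -T_3, \Dis$'' (Theorem~\ref{thm:tcsp} and Corollary~\ref{cor:tvcsp-clas}), and checking that Proposition~\ref{prop:pp-constr-rel} and Remark~\ref{rem:pp-const-crisp} really do let us move freely between the valued structure $\bA$, the crisp structure $\bA'$, and its polymorphism clone. Once these notions are lined up, the corollary is essentially a repackaging of Corollary~\ref{cor:tvcsp-clas} and Theorem~\ref{thm:tcsp-pwnu}.
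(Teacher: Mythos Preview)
Your proof is correct and follows essentially the same route as the paper's: reduce to the crisp structure $\bA' = (\Q;\langle \bA\rangle_0^\infty)$ via Proposition~\ref{prop:pp-constr-rel}, apply the pwnu dichotomy (Theorem~\ref{thm:tcsp-pwnu}) to $\bA'$, use Lemma~\ref{lem:hard} plus Theorem~\ref{thm:NP} for the hard side, and use Proposition~\ref{prop:hard} (together with the $\Dis$ fact) to rule out the six bad relations and fall back to Corollary~\ref{cor:tvcsp-clas} for the tractable side. Your extra care in spelling out why NP-hardness of a finite-signature reduct transfers to $\bA$ is fine but not a departure from the paper's argument.
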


\begin{proof}
Let $\bA':=(\Q;\langle \bA \rangle_0^\infty)$. By Theorem~\ref{thm:NP}, $\VCSP(\bA)$ is in NP.
By Proposition~\ref{prop:pp-constr-rel}, $\bA$ pp-constructs $K_3$ if and only if $\bA'$ pp-constructs $K_3$ and in this case, $\VCSP(\bA)$ is NP-complete by Lemma~\ref{lem:hard}. Hence, it follows from Theorem~\ref{thm:tcsp-pwnu} applied to  $\bA'$ that either item 1 holds or $\Pol(\Q; \langle \bA \rangle_0^\infty)$ contains a~pwnu operation. Hence, if $\Pol(\Q; \langle \bA \rangle_0^\infty)$ contains a~pwnu operation, then $\bA$ does not pp-construct $K_3$. By Proposition~\ref{prop:hard} and Theorem~\ref{thm:crisp-hard},
$\Betw$, $\Cyc$, $\Sep$, $T_3$, $-T_3$, $\Dis \notin \langle \bA \rangle$ and therefore, item 2 from Corollary~\ref{cor:tvcsp-clas} applies and $\VCSP(\bA)$ is in P.
\end{proof}

Conjecture 9.3 in \cite{Resilience-VCSPs} states that, under some structural assumptions on $\bA$, $\VCSP(\bA)$ is in P whenever $\bA$ does not pp-construct $K_3$
(and is NP-hard otherwise)\footnote{The original formulation uses the structure $(\{0,1\};\OIT)$, but it is well-known that this structure pp-constructs $K_3$ and vice versa \cite{Book}.}. All temporal \checked{valued} structures
satisfy the assumptions of the conjecture and hence Corollary~\ref{cor:clas-refor} confirms the conjecture for the class of temporal VCSPs.

\section{Future work}
\label{sect:gvcsps}
In analogy to the development of the results on infinite-domain CSPs, we propose the class of valued structures that are preserved by all automorphisms of the countable random graph as a natural next step in the complexity classification of VCSPs on infinite domains.

\begin{question}\label{ques:graph}
Does the class of VCSPs of all valued structures $\bA$ over a finite signature such that $\Aut(\bA)$ contains the automorphism group of the countable random graph exhibit a P vs.\ NP-complete dichotomy? In particular, is $\VCSP(\bA)$ in P whenever $\bA$ does not pp-construct $K_3$?
\end{question}
A positive answer to the second question in Question~\ref{ques:graph} would confirm \cite[Conjecture 9.3]{Resilience-VCSPs} for valued structures preserved by all automorphisms of the countable random graph.

\section*{Acknowledgements}
Manuel Bodirsky and \v{Z}aneta Semani\v{s}inov\'{a} have been funded by the DFG (Project FinHom, Grant 467967530) and by the European Research Council (Project POCOCOP, ERC Synergy Grant 101071674). Views and opinions expressed are however those of the authors only and do not necessarily reflect those of the European Union or the European Research Council Executive Agency. Neither the European Union nor the granting authority can be held responsible for them.

\'Edouard Bonnet was supported by the ANR project TWIN-WIDTH (ANR-21-CE48-0014-01).

\v{Z}aneta Semani\v{s}inov\'{a} was funded in whole or in part by the Austrian Science Fund (FWF)10.55776/ESP6949724.





\bibliographystyle{elsarticle-num} 
\bibliography{global}






\end{document}